\newtheorem{theorem}{Theorem}[section]
\newtheorem{lemma}[theorem]{Lemma}
\newtheorem{corollary}[theorem]{Corollary}
\newtheorem{prop}[theorem]{Proposition}
\theoremstyle{definition}
\newtheorem{definition}[theorem]{Definition}
\newtheorem{example}[theorem]{Example}
\newtheorem{remark}[theorem]{Remark}
\newtheorem{thm}{Theorem}
\newcommand{\N}{\mathbb{N}}
\newcommand{\Z}{\mathbb{Z}}
\newcommand{\Q}{\mathbb{Q}}
\newcommand{\C}{\mathbb{C}}
\newcommand{\VV}{\mathcal{V}}
\newcommand{\HH}{\mathcal{H}}
\newcommand{\cE}{\mathcal{E}}
\newcommand{\cF}{\mathcal{F}}
\newcommand{\cL}{\mathcal{L}}
\DeclareMathOperator{\rank}{rank} 
\DeclareMathOperator{\im}{im} \DeclareMathOperator{\coker}{coker}
\DeclareMathOperator{\codim}{codim}
\DeclareMathOperator{\diag}{diag}
 \DeclareMathOperator{\id}{id}
\DeclareMathOperator{\T}{Tors} \DeclareMathOperator{\Hom}{{Hom}}
 \DeclareMathOperator{\Ext}{{Ext}}
 \DeclareMathOperator{\spn}{span}
 \DeclareMathOperator{\Lie}{Lie}
\DeclareMathOperator{\Spec}{{Spec}}
\DeclareMathOperator{\Spm}{{maxSpec}}
\DeclareMathOperator{\ii}{i} \DeclareMathOperator{\ord}{ord}
\DeclareMathOperator{\Tors}{Tors}
\DeclareMathOperator{\alg}{{alg}}
\DeclareMathOperator{\group}{{group}}
\DeclareMathOperator{\corank}{{corank}}
\DeclareMathOperator{\Ob}{{Ob}}
\DeclareMathOperator{\Mor}{{Mor}}
\newcommand{\AbRed}{\mathsf{AbRed}}
\newcommand{\SubAbRed}{\mathsf{SubAbRed}}
\newcommand{\AbFgGp}{\mathsf{AbFgGp}}
\newcommand{\SubAbGp}{\mathsf{SubAbGp}}
\newcommand{\wG}{\widehat{G}}
\newcommand{\same}{\Longleftrightarrow}
\newcommand{\surj}{\twoheadrightarrow}
\newcommand{\inj}{\hookrightarrow}
\newcommand{\abs}[1]{\left| #1 \right|}
\def\set#1{{\{ #1\}}}
\title[Intersections of translated algebraic subtori]%
{Intersections of translated algebraic subtori}
\author[A.~Suciu]{Alexander~I.~Suciu$^1$}
\address{Department of Mathematics,
Northeastern University, Boston, MA 02115, USA}
\email{a.suciu@neu.edu}
\thanks{$^1$Partially supported by NSA grant H98230-09-1-0021
and NSF grant DMS--1010298}
\author[Y.~Yang]{Yaping~Yang}
\address{Department of Mathematics,
Northeastern University, Boston, MA 02115, USA}
\email{yang.yap@husky.neu.edu}
\author[G.~Zhao]{Gufang~Zhao}
\address{Department of Mathematics,
Northeastern University, Boston, MA 02115, USA}
\email{zhao.g@husky.neu.edu}
\subjclass[2010]{Primary
20G20; 
Secondary
18B35,  
20E15,   
55N25.   
}
\keywords{Complex algebraic torus, Pontrjagin duality,
lattice of subgroups, primitive subgroup, translated algebraic
subgroup, determinant group, characteristic variety,
fibered category.}
\begin{document}
\begin{abstract}
We exploit the classical correspondence
between finitely generated abelian groups and abelian complex
algebraic reductive groups to study the intersection theory of
translated subgroups in an abelian complex algebraic reductive group,
with special emphasis on intersections of (torsion) translated subtori in an
algebraic torus.

\end{abstract}

\maketitle
\tableofcontents

\section{Introduction}
\label{sect:intro}

\subsection{Motivation}
\label{subsec:intro0}

In this note, we study the intersection theory of translated
subtori in a complex algebraic torus, and, more generally,
of translated subgroups in an abelian complex algebraic
reductive group.  The motivation for this study comes in
large part from the investigation of characteristic varieties
and homological finiteness properties of abelian covers,
embarked upon in \cite{Su} and \cite{SYZ}.

As shown by Arapura \cite{Ar}, the jump loci for cohomology with
coefficients in rank $1$ local systems on a connected,
smooth, quasi-projective variety $X$ consist of translated
subtori of the character torus of $\pi_1(X)$. Understanding
the way these subtori intersect gives valuable information
on the Betti numbers of regular, abelian covers of $X$, 
see for instance \cite{Su, SYZ}.

Studying the intersection theory of arbitrary subvarieties
in a complex algebraic torus is beyond the scope of this work.
Nevertheless, the torsion points of the intersection of such subvarieties
can be located by considering the intersection of suitable translated
subtori.  Indeed, as shown by M.~Laurent in \cite{La}, given any
subvariety $V \subset (\C^*)^r$, there exist torsion-translated
subtori $Q_1, \dots , Q_s$ in $(\C^*)^r$ such that
$\Tors(V)=\bigcup_{j} \Tors(Q_j)$.  Consequently,
in order to locate the torsion points on an arbitrary intersection
of subvarieties, $V_1 \cap \dots \cap V_k$, it is enough to find the
torsion-translated subtori $Q_{i,j}$ corresponding to each $V_i$,
and then taking the torsion points on the variety
$\bigcap_{i} (\bigcup_{j} Q_{i,j})$.

\subsection{Pontrjagin duality}
\label{subsec:intro1}

We start by formalizing the correspondence between the category of
abelian complex algebraic reductive groups and the category of
finitely generated abelian groups.  This well-known correspondence
(sometimes called Pontrjagin duality) is based on the functor
$\Hom(-,\C^*)$.
\par
For a fixed algebraic group
$T:=\widehat{H}=\Hom_{\group}(H,\C^*)$, there is a duality
\begin{equation}
\label{eq:pont dual}
\xymatrix{
*\txt{Algebraic subgroups of $T$}
\ar@/^1pc/@<1ex>[r]|-{\ \varepsilon\ } & *\txt{Subgroups of
$H$}\ar@/^1pc/@<1ex>[l]|-{\ V\ }}
\end{equation}
where $\varepsilon$ sends  $W \subseteq T$
to $\ker\, (\Hom_{\alg}(T,\C^*) \surj \Hom_{\alg}(W,\C^*))$,
while $V$ sends  $\xi \le H$ to $\Hom_{\group}(H/\xi,\C^*)$.
Both sides of \eqref{eq:pont dual} are partially ordered sets, with naturally
defined meets and joins. As we show in Theorem \ref{theorem-isomorphic},
the above correspondence is an order-reversing equivalence of lattices.

For any subgroup $\xi$ of $H$, let
$\overline{\xi}:=\{ x\in H \mid nx\in \xi \hbox{ for some } n\in\N\}$;
the subgroup $\xi$ is called primitive if $\overline{\xi}=\xi$.
Under the correspondence $H \leftrightsquigarrow T$,
primitive subgroups of $H$ correspond to connected
algebraic subgroups of $T$.    In general, the components
of $V(\xi)$ are indexed by the ``determinant group,"
$\overline{\xi}/\xi$, while the identity component
is $V(\overline\xi)$.

\subsection{Intersections of translated subgroups}
\label{subsec:intro2}

Building on the approach taken by E.~Hironaka in \cite{Hi},
we use Pontrjagin duality to study intersections of
translated subtori in a complex algebraic torus, and, more
generally, translated subgroups in an abelian algebraic
reductive group over $\C$.  This allows us to decide
whether a finite collection of translated subgroups
intersect non-trivially, and, if so, what the dimension of their
intersection is.

More precisely, let $\xi_1, \dots, \xi_k$ be subgroups of $H$,
and let $\xi$ be their sum.
Let $\sigma\colon \xi_1\times \cdots \times \xi_k \to
\xi$ be the sum homomorphism, and let
$\gamma\colon \xi_1\times \cdots \times \xi_k \to
H^k$ be the product of the inclusion maps.
Finally, let $\eta_1, \dots, \eta_k$ be elements in $T=\widehat{H}$,
and $\eta = (\eta_1, \dots, \eta_k)\in T^{k}$.

\begin{thm}[Theorem~\ref{translated-prop}]
\label{thm:intro1}
The intersection $Q=\eta_1 V(\xi_1) \cap \cdots \cap \eta_k V(\xi_k)$
is non-empty if and only if $\hat\gamma(\eta) \in \im(\hat\sigma)$,
in which case $Q=\rho V(\xi)$, for some $\rho\in Q$.
Furthermore, if the intersection is non-empty, then
\begin{enumerate}
\item  \label{rr1}
$Q$ decomposes into
irreducible components as
$Q= \bigcup_{\tau \in \widehat{\overline{\xi}/\xi}}
\rho \tau V(\overline{\xi})$, and $\dim(Q)=\dim(T)-\rank(\xi)$.

\item \label{rr2}
If $\eta$ has finite order, then $\rho$ can be chosen
to have finite order, too.  Moreover, $\ord(\eta) \mid
\ord(\rho) \mid c\cdot \ord(\eta) $,
where $c$ is the largest order of
any element in $\overline{\xi}/\xi$.
\end{enumerate}
\end{thm}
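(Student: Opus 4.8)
\emph{Strategy.} The whole statement becomes transparent once transported to the dual side via Pontrjagin duality (Theorem~\ref{theorem-isomorphic}); the only non-formal ingredient is that $\C^*$ is a divisible, hence injective, $\Z$-module, so that every character of a subgroup of $H$ extends to $H$. Recall that for a subgroup $\zeta\le H$ the algebraic subgroup $V(\zeta)=\Hom(H/\zeta,\C^*)$ is exactly the set of characters of $H$ trivial on $\zeta$. Hence $\chi\in\eta_iV(\xi_i)$ iff $\chi|_{\xi_i}=\eta_i|_{\xi_i}$, so $Q=\{\chi\in T:\chi|_{\xi_i}=\eta_i|_{\xi_i}\text{ for all }i\}$. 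Dualizing the surjection $\sigma$ and the map $\gamma$ gives, after unwinding definitions, an injection $\hat\sigma\colon\widehat{\xi}\hookrightarrow\widehat{\xi_1}\times\dots\times\widehat{\xi_k}$ with $\hat\sigma(\psi)=(\psi|_{\xi_1},\dots,\psi|_{\xi_k})$, while $\hat\gamma(\eta)=(\eta_1|_{\xi_1},\dots,\eta_k|_{\xi_k})$; so $\hat\gamma(\eta)\in\im(\hat\sigma)$ says precisely that the characters $\eta_i|_{\xi_i}$ glue to one character $\psi$ of $\xi=\xi_1+\dots+\xi_k$. If $Q\neq\emptyset$ this is witnessed by $\psi=\rho|_\xi$ for any $\rho\in Q$; conversely, given such a $\psi$, extend it to a character $\rho$ of $H$ (injectivity of $\C^*$) to obtain $\rho\in Q$. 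Finally, for fixed $\rho\in Q$, a character $\chi$ lies in $Q$ iff $\chi\rho^{-1}$ is trivial on every $\xi_i$, equivalently on $\xi$, equivalently $\chi\rho^{-1}\in V(\xi)$; thus $Q=\rho V(\xi)$.

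\emph{Part~(\ref{rr1}).} Apply $\Hom(-,\C^*)$ to $0\to\overline{\xi}/\xi\to H/\xi\to H/\overline{\xi}\to 0$ to get an exact sequence $1\to V(\overline{\xi})\to V(\xi)\to\widehat{\overline{\xi}/\xi}\to 1$. Since $\overline{\xi}$ is primitive, $H/\overline{\xi}$ is torsion-free, so $V(\overline{\xi})\cong(\C^*)^{\dim T-\rank\xi}$ is connected and is the identity component of $V(\xi)$; the irreducible components of $V(\xi)$ are then the cosets of $V(\overline{\xi})$, indexed by the finite group $\widehat{\overline{\xi}/\xi}$. Translating by $\rho$ yields $Q=\bigcup_{\tau}\rho\tau V(\overline{\xi})$, a decomposition into translated subtori, which are thus its irreducible components; and $\dim Q=\dim V(\overline{\xi})=\dim T-\rank\overline{\xi}=\dim T-\rank\xi$, using that $\overline{\xi}/\xi$ is finite.

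\emph{Part~(\ref{rr2}).} Assume $\eta$ has finite order. Then $\hat\gamma(\eta)$ is torsion, hence so is $\psi$ (as $\hat\sigma$ is injective), and $\ord(\psi)$ records the order to which $\eta$ restricts onto $\xi$. Build $\rho$ by extending $\psi$ in two stages. First extend $\psi$ from $\xi$ to $\overline{\xi}$ (injectivity of $\C^*$): since $c$ is the largest element order in $\overline{\xi}/\xi$, equivalently its exponent, we have $cx\in\xi$ for every $x\in\overline{\xi}$, so the value at $x$ of any such extension is a $c$-th root of $\psi(cx)$ and hence a $c\cdot\ord(\psi)$-th root of unity; therefore the extension has order dividing $c\cdot\ord(\psi)$. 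Second, extend from $\overline{\xi}$ to $H$: as $H/\overline{\xi}$ is free, $\overline{\xi}\hookrightarrow H$ splits, so extend by the trivial character on a complement, leaving the image---hence the order---unchanged. The resulting $\rho$ is a torsion point of $Q$ with $\ord(\psi)\mid\ord(\rho)\mid c\cdot\ord(\psi)$: the left divisibility because $\rho|_\xi=\psi$, the right by construction. Identifying $\ord(\psi)$ with $\ord(\eta)$ completes the proof.

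\emph{Main obstacle.} The non-emptiness criterion, the identity $Q=\rho V(\xi)$, and part~(\ref{rr1}) are essentially formal consequences of duality and the injectivity of $\C^*$. The real work is part~(\ref{rr2}): one must control the order of a character extension through the torsion quotient $\overline{\xi}/\xi$---where the factor $c$ is genuinely needed---as opposed to the free quotient $H/\overline{\xi}$, where the splitting costs nothing; and one must pin down which order of $\eta$ the two-sided estimate refers to.
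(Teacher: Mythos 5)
Your argument is correct and is essentially the paper's own proof transported to the character side: your gluing criterion for $\hat\gamma(\eta)\in\im(\hat\sigma)$ is exactly the content of Lemmas~\ref{lem:sigma} and~\ref{lem:gamma}, your component decomposition is Lemma~\ref{lem:vxi}, and your two-stage extension of $\psi$ (first to $\overline{\xi}$, bounding the order by the exponent $c$, then to $H$ via the splitting of \eqref{eq:split}) is the dual of the paper's lift of $\bar\rho$ through the sequences \eqref{eq:ex3} and \eqref{eq:ex4}. The one step you defer to an ``identification''---that $\ord(\psi)=\ord(\eta)$ rather than merely $\ord(\psi)\mid\ord(\eta)$, since $\hat\gamma$ only remembers the restrictions $\eta_i|_{\xi_i}$---is asserted just as briefly in the paper (``Note that $\ord(\bar\rho)=\ord(\eta)$''), so your proof is faithful to the original on that point as well.
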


As a corollary, we obtain a general description of the
intersection of two arbitrary unions of translated subgroups,
$W=\bigcup_i \eta_i V(\xi_i)$ and $W'=\bigcup_j \eta_j' V(\xi_j')$:
\begin{equation}
\label{eq:int}
W \cap W' = \bigcup_{i, j:\: \hat\gamma_{i, j}(\eta_i, \eta_j)
\in\im(\hat\sigma_{i, j})} \eta_{i, j} V(\xi_i+\xi_j').
\end{equation}
Moreover, if all the elements $\eta_i$ and $\eta'_j$
have finite order, so do the elements $\eta_{i,j}$.

In the case when $H= \Z^r$ and $T=(\C^{*})^{r}$,
Theorem \ref{thm:intro1} recovers a result from \cite{Hi}.

\subsection{Exponential interpretation}
\label{subsec:intro3}

Next, we turn to the relationship between the correspondence
$H \leftrightsquigarrow T$ and the exponential map $\exp\colon \Lie(T)\to T$.

Setting $\HH=H^{\vee}:=\Hom(H, \Z)$, the exponential
map may be identified with the map
$\Hom(\HH^{\vee}, \C) \to \Hom(\HH^{\vee}, \C^*)$
induced by  $\C\to \C^*$, $z\mapsto e^{2 \pi \ii z}$.
Furthermore, if $\chi\le \HH$ is a sublattice, then
$V((\HH/\chi)^{\vee})= \exp(\chi \otimes \C)$.

\begin{thm}[Theorem~\ref{thm:exp}]
\label{thm:intro2}
Let $T$ be a complex abelian reductive group, and
let $\chi_1$ and $\chi_2$ be two sublattices of 
$\HH=\Hom_{\alg}(T, \C^*)^{\vee}$.
\begin{enumerate}
\item \label{b1}
Set $\xi=(\HH/\chi_1)^{\vee} +(\HH/\chi_2)^{\vee}$.
We then have an equality of algebraic groups,
\[
\exp(\chi_1 \otimes \C) \cap \exp(\chi_2 \otimes \C) =
\widehat{\overline{\xi}/\xi}\cdot V(\overline{\xi}).
\]

\item \label{b2}
Now suppose  $\chi_1$ and $\chi_2$ are
primitive sublattices of $\HH$, with $\chi_1 \cap \chi_2=0$.
We then have
an isomorphism of finite abelian groups,
\[
\exp(\chi_1 \otimes \C) \cap \exp(\chi_2 \otimes \C) \cong
\overline{\chi_1 + \chi_2}/\chi_1 + \chi_2.
\]
\end{enumerate}
\end{thm}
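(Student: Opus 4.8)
The plan is to deduce both parts from Theorem~\ref{translated-prop} together with the identity $\exp(\chi\otimes\C)=V((\HH/\chi)^\vee)$ recorded just above the statement. Put $\xi_i:=(\HH/\chi_i)^\vee$, so that $\exp(\chi_i\otimes\C)=V(\xi_i)$ and $\xi=\xi_1+\xi_2$ is exactly the group named in the theorem. For part~\eqref{b1}, I would observe that $V(\xi_1)\cap V(\xi_2)$ is a subgroup of the ambient torus containing the identity, hence non-empty, so Theorem~\ref{translated-prop} applied with the two \emph{trivial} translations gives $V(\xi_1)\cap V(\xi_2)=\rho V(\xi)$ for some $\rho$ in it; since a coset of $V(\xi)$ is a subgroup only when it equals $V(\xi)$, we may take $\rho=1$. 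Conclusion~\eqref{rr1} of Theorem~\ref{translated-prop} then reads $V(\xi_1)\cap V(\xi_2)=\bigcup_{\tau\in\widehat{\overline{\xi}/\xi}}\tau V(\overline{\xi})=\widehat{\overline{\xi}/\xi}\cdot V(\overline{\xi})$, which is~\eqref{b1}. (Equivalently, $V(\xi_1)\cap V(\xi_2)=V(\xi_1+\xi_2)$ already by the order-reversing lattice equivalence of Theorem~\ref{theorem-isomorphic}, and~\eqref{rr1} describes the irreducible components of $V(\xi)$.)

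For part~\eqref{b2}, the first step is to see that this intersection is finite, and this is where the two hypotheses enter. Taking orthogonal complements inside $\HH^{\vee\vee}=\HH$ and using $\chi_i^{\perp\perp}=\overline{\chi_i}=\chi_i$ (primitivity), one gets $\xi^\perp=(\xi_1+\xi_2)^\perp=\chi_1^{\perp\perp}\cap\chi_2^{\perp\perp}=\chi_1\cap\chi_2=0$, so $\xi$ has full rank in $\HH^\vee$; that is, $\overline{\xi}=\HH^\vee$ and $V(\overline{\xi})=\{1\}$. By part~\eqref{b1} the intersection therefore equals $\widehat{\overline{\xi}/\xi}=\Hom(\HH^\vee/\xi,\C^*)$, a finite abelian group, (non-canonically) isomorphic to $\HH^\vee/\xi$ because $\C^*$ contains all roots of unity. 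So~\eqref{b2} reduces to the purely lattice-theoretic identity $\HH^\vee/\xi\cong\overline{\chi_1+\chi_2}/(\chi_1+\chi_2)$.

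To establish that, I would begin with the exact sequence $0\to\HH\xrightarrow{\delta}(\HH/\chi_1)\oplus(\HH/\chi_2)\to\HH/(\chi_1+\chi_2)\to0$, where $\delta$ is the pair of quotient maps: its kernel is $\chi_1\cap\chi_2=0$, and its cokernel is identified with $\HH/(\chi_1+\chi_2)$ via $(\bar x,\bar y)\mapsto\bar x-\bar y$. Now dualize over $\Z$. Since $\chi_1$ and $\chi_2$ are primitive, the groups $\HH/\chi_1$, $\HH/\chi_2$ and $\HH$ are free, so every $\Ext^1_\Z(-,\Z)$ term vanishes except the last, leaving
\[
0\to(\HH/(\chi_1+\chi_2))^\vee\to(\HH/\chi_1)^\vee\oplus(\HH/\chi_2)^\vee\xrightarrow{\delta^\vee}\HH^\vee\to\Ext^1_\Z(\HH/(\chi_1+\chi_2),\Z)\to0.
\]
The dual of $\HH\surj\HH/\chi_i$ is the inclusion of $\chi_i^\perp=(\HH/\chi_i)^\vee=\xi_i$, so $\im(\delta^\vee)=\xi_1+\xi_2=\xi$ and hence $\HH^\vee/\xi\cong\Ext^1_\Z(\HH/(\chi_1+\chi_2),\Z)$; since $\Ext^1_\Z(A,\Z)$ is the torsion subgroup of $A$ for any finitely generated abelian group $A$, this is $(\HH/(\chi_1+\chi_2))_{\mathrm{tors}}=\overline{\chi_1+\chi_2}/(\chi_1+\chi_2)$, as wanted.

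The main obstacle is modest: part~\eqref{b1} is essentially a reading of Theorem~\ref{translated-prop} in the trivially-translated case, so the only genuine content is the lattice computation behind~\eqref{b2}. Its one delicate point is the exactness of the dualized sequence — precisely the place where primitivity of $\chi_1$ and $\chi_2$ is used — and one should take care to state~\eqref{b2} only as an abstract isomorphism of finite abelian groups, since both the identification $\Hom(A,\C^*)\cong A$ and the extraction of the torsion subgroup from $\Ext^1_\Z(-,\Z)$ are non-canonical.
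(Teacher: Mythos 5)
Your proposal is correct. Part \eqref{b1} is essentially the paper's own argument: the paper chains Lemma~\ref{lem:vexp} ($\exp(\chi_i\otimes\C)=V((\HH/\chi_i)^{\vee})$, which you quote from the surrounding text rather than reprove), Lemma~\ref{lem:vlattice} ($V(\xi_1)\cap V(\xi_2)=V(\xi_1+\xi_2)$), and Lemma~\ref{lem:vxi} (the component decomposition $V(\xi)=\widehat{\overline{\xi}/\xi}\cdot V(\overline{\xi})$); your detour through Theorem~\ref{translated-prop} with trivial translations, together with your parenthetical remark, amounts to the same thing. The genuine divergence is in part \eqref{b2}. The paper also reduces to $\overline{\xi}=\HH^{\vee}$, $V(\overline{\xi})=\{1\}$, but then invokes Proposition~\ref{prop:xi}, whose proof is an explicit basis/Smith-normal-form computation comparing gcd's of minors of the matrices of $\iota\colon\chi_1+\chi_2\inj\HH$ and $\pi\colon\HH\to\HH/\chi_1\oplus\HH/\chi_2$. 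You instead dualize the Mayer--Vietoris-type sequence $0\to\HH\to(\HH/\chi_1)\oplus(\HH/\chi_2)\to\HH/(\chi_1+\chi_2)\to 0$ over $\Z$ and read off $\HH^{\vee}/\xi\cong\Ext^1_{\Z}(\HH/(\chi_1+\chi_2),\Z)\cong\Tors(\HH/(\chi_1+\chi_2))=\overline{\chi_1+\chi_2}/(\chi_1+\chi_2)$, with primitivity entering exactly where it should (freeness of $\HH/\chi_i$ kills the intermediate $\Ext^1$ terms). Your route is shorter and more conceptual, and is in the spirit of the paper's own proof of Lemma~\ref{lem:dual lattice}, which already uses $\Ext(H/\xi,\Z)=\Tors(H/\xi)$; the paper's matrix proof is more elementary and, via its auxiliary Proposition~\ref{prop:xi}, is stated for general finitely generated $H$ with $\xi_1\cap\xi_2$ finite (though it too immediately reduces to the free, zero-intersection case, so nothing essential is lost). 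Your closing caveat that the isomorphism in \eqref{b2} is non-canonical is apt and consistent with how the statement is phrased.
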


In the case when $T=(\C^*)^r$, part \eqref{b2} recovers (via a
different approach) the main result from \cite{Na}.

\subsection{Intersections of torsion-translated subtori}
\label{subsec:intro4}

In the case when all the translation factors $\eta_i$
of the subtori $V(\xi_i)$ appearing in Theorem \ref{thm:intro1}
are of finite order, we can say more about the intersection
$\bigcap_{i=1}^{k} \eta_i V(\xi_i)$.  Since intersections of
torsion-translated subtori are again torsion-translated subtori,
we may assume $k=2$.

Let $\xi$ be a primitive lattice in $\Z^r$.  Given a vector
$\lambda\in \Q^r$, we say $\lambda$ virtually belongs to 
$\xi$ if $d\cdot \lambda\in \Z^r$, where $d$ is the determinant
of the matrix $[\xi \mid \xi_0]$ obtained by concatenating
basis vectors for the sublattices $\xi$
and $\xi_0=\Q\lambda \cap \Z^r$.

\begin{thm}[Theorem~\ref{thm:k=2}]
\label{thm:intro3}
Let $\xi_1$ and $\xi_2$ be two sublattices in $\Z^r$.
Set $\varepsilon=\xi_1\cap \xi_2$, and write
$\widehat{\overline{\varepsilon}/\varepsilon}=\{\exp(2\pi \ii \mu_k) \}_{k=1}^s$.
Also let $\eta_1$ and $\eta_2$ be two torsion elements in $(\C^*)^r$,
and write $\eta_j=\exp(2\pi \ii \lambda_j)$. The following are equivalent:
\begin{enumerate}
\item  \label{qq1}
The variety $Q=\eta_1 V(\xi_1) \cap \eta_2 V(\xi_2)$ is non-empty.
\item \label{qq2}
One of the vectors $\lambda_1-\lambda_2 - \mu_k$ virtually belongs to
the lattice $(\Z^r/\varepsilon)^{\vee}$.
\end{enumerate}
If either condition is satisfied, then $Q=\rho V(\xi_1+\xi_2)$
for some $\rho \in Q$.
\end{thm}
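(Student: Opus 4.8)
The plan is to deduce this from Theorem~\ref{translated-prop} (taken with $k=2$), and then to unwind its non-emptiness criterion $\hat\gamma(\eta)\in\im(\hat\sigma)$ into the explicit condition~\eqref{qq2}. The last assertion---that $Q=\rho V(\xi_1+\xi_2)$ for some $\rho\in Q$ whenever $Q\neq\emptyset$---is already part of the conclusion of Theorem~\ref{translated-prop}, so the entire task is the equivalence \eqref{qq1}$\,\Longleftrightarrow\,$\eqref{qq2}.

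First I would rewrite $\im(\hat\sigma)$. The sum map $\sigma\colon\xi_1\times\xi_2\to\xi_1+\xi_2$ sits in the short exact sequence
\[
0\longrightarrow\varepsilon\xrightarrow{\ \delta\ }\xi_1\oplus\xi_2\xrightarrow{\ \sigma\ }\xi_1+\xi_2\longrightarrow 0,\qquad \delta(a)=(a,-a),
\]
whose kernel is $\varepsilon=\xi_1\cap\xi_2$ embedded anti-diagonally. Since $\C^*$ is divisible, hence injective, $\Hom(-,\C^*)$ is exact, and dualizing yields an exact sequence $1\to\widehat{\xi_1+\xi_2}\xrightarrow{\hat\sigma}\widehat{\xi_1}\times\widehat{\xi_2}\xrightarrow{\hat\delta}\widehat\varepsilon\to 1$ with $\hat\delta(\psi_1,\psi_2)=(\psi_1|_{\varepsilon})(\psi_2|_{\varepsilon})^{-1}$. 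Therefore $\hat\gamma(\eta)=(\eta_1|_{\xi_1},\eta_2|_{\xi_2})$ belongs to $\im(\hat\sigma)=\ker(\hat\delta)$ exactly when $\eta_1$ and $\eta_2$ have equal restrictions to $\varepsilon$, i.e.\ when $\eta_1\eta_2^{-1}\in V(\varepsilon)$; by Theorem~\ref{translated-prop} this already gives \eqref{qq1}$\,\Longleftrightarrow\,\eta_1\eta_2^{-1}\in V(\varepsilon)$.

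Next I would put $V(\varepsilon)$ in coordinates. Since $V(\varepsilon)=\Hom(\Z^r/\varepsilon,\C^*)$ has identity component $V(\overline\varepsilon)$ and component group $\widehat{\overline\varepsilon/\varepsilon}=\{\exp(2\pi\ii\mu_k)\}_{k=1}^{s}$, we have $V(\varepsilon)=\bigcup_{k=1}^{s}\exp(2\pi\ii\mu_k)\,V(\overline\varepsilon)$ (equivalently, this is Theorem~\ref{translated-prop}\eqref{rr1} for the single lattice $\varepsilon$). By the exponential dictionary of \S\ref{subsec:intro3}, $V(\overline\varepsilon)=\exp\bigl(2\pi\ii\,((\Z^r/\varepsilon)^{\vee}\otimes\C)\bigr)$, and the homomorphism $v\mapsto\exp(2\pi\ii v)$ from $\C^r$ to $(\C^*)^r$ has kernel $\Z^r$. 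Hence, writing $\eta_j=\exp(2\pi\ii\lambda_j)$, the condition $\eta_1\eta_2^{-1}\in V(\varepsilon)$ translates into: for some $k$, the vector $\lambda_1-\lambda_2-\mu_k$ lies in $(\Z^r/\varepsilon)^{\vee}\otimes\C+\Z^r$; and since $\lambda_1,\lambda_2,\mu_k$ are rational, one may replace $\otimes\,\C$ by $\otimes\,\Q$ here.

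What remains---and this is the step I expect to be the real obstacle---is to check that, for a rational vector $w$ and the primitive lattice $\ell:=(\Z^r/\varepsilon)^{\vee}=\Z^r\cap(\Q\varepsilon)^{\perp}$, one has $w\in(\Z^r/\varepsilon)^{\vee}\otimes\Q+\Z^r$ if and only if $w$ virtually belongs to $\ell$; this is precisely where the determinant $d$ in the definition of ``virtually belongs'' has to be pinned down. I would split into cases: if $w\in\Q\ell$ then $\ell_0:=\Q w\cap\Z^r$ sits inside $\ell$, the matrix $[\ell\mid\ell_0]$ has repeated column span, $d=0$, and both sides hold vacuously; otherwise I would pass to the primitive sublattice $L:=\Z^r\cap(\Q\ell+\Q w)$ (which makes $[\ell\mid\ell_0]$ genuinely square in an $L$-basis), identify $d=\det[\ell\mid\ell_0]$ with the index $[\,L:\ell\oplus\ell_0\,]$, and then verify by clearing denominators that $dw\in\Z^r$ holds exactly when the class of $w$ in $\Q^r/\Q\ell$ lands in the image of $\Z^r$, i.e.\ when $w\in\Q\ell+\Z^r$. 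The care needed is mostly bookkeeping: handling the non-square shape of $[\ell\mid\ell_0]$ by working inside $L$, and keeping the transversal $\{\mu_k\}$ of components straight. Combining this with the two previous steps yields \eqref{qq1}$\,\Longleftrightarrow\,$\eqref{qq2}, while the shape $Q=\rho V(\xi_1+\xi_2)$ comes for free from Theorem~\ref{translated-prop}.
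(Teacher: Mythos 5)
Your proposal is correct and follows essentially the same route as the paper, whose entire proof of Theorem~\ref{thm:k=2} is the citation ``Follows from Corollary \ref{cor:k=2} and Lemma \ref{lem:peta}'': your first step re-derives Corollary~\ref{cor:k=2} (note $V(\xi_1)\cdot V(\xi_2)=V(\varepsilon)$ by Lemma~\ref{lem:vlattice}), and the step you flag as ``the real obstacle''---that $w\in\ell\otimes\Q+\Z^r$ iff $w$ virtually belongs to $\ell$---is precisely Lemma~\ref{lem:peta}, already proved in \S\ref{subsec:virtual} via a Smith-normal-form basis change rather than your saturation-index argument.
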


This theorem provides an algorithm for checking the condition from
Theorem \ref{thm:intro1}, solely in terms of arithmetic data
extracted from the lattices $\xi_1$ and $\xi_2$ and the rational
vectors  $\lambda_1$ and $\lambda_2$.  The complexity of this
algorithm is linear with respect to the order of the determinant group
$\overline{\varepsilon}/\varepsilon$.

\subsection{Some applications}
\label{subsec:intro5}

As mentioned in \S\ref{subsec:intro0}, one of our main motivations
comes from the study of characteristic varieties, especially
as it regards the intersection pattern of the components
of such varieties, and the count of their torsion points.
Theorem \ref{thm:intro1} yields several corollaries, which
provide very specific information in this direction; this information
has since been put to use in \cite{Su, SYZ}.

In addition to these applications, we also
consider the following counting problem: how
many translated subtori does an algebraic torus
have, once we fix the identity component,
and the number of irreducible components?
Using the correspondence from \eqref{eq:pont dual},
and a classical result on the number of finite-index
subgroups of a free abelian group,
we express the generating function for this counting
problem in terms of the Riemann zeta function.

\section{Finitely generated abelian groups and abelian reductive groups}
\label{sect:corresp}

In this section, we describe an order-reversing isomorphism
between the lattice of subgroups of a finitely generated
abelian group $H$ and the lattice of algebraic
subgroups of the corresponding abelian, reductive,
complex algebraic group $T$.

\subsection{Abelian reductive groups}
\label{subsec:abelredgp}

We start by recalling a well-known equivalence between two categories:
that of finitely generated abelian groups, $\AbFgGp$, and that
of abelian, reductive, complex algebraic groups, $\AbRed$.
For a somewhat similar approach, see also \cite{Hi} and
\cite{Mi}.

Let $\C^*$ be the multiplicative group of units in the field $\C$
of complex numbers.
Given a group $G$, let $\wG=\Hom(G, \C^{*})$ be the group of
complex-valued characters of $G$, with pointwise multiplication
inherited from $\C^*$, and identity the character taking constant
value $1 \in \C^*$ for all $g \in G$. If the group $G$ is finitely generated,
then $\wG$ is an abelian, complex reductive algebraic group.

Note that $\wG\cong \widehat{H}$, where $H$ is the maximal abelian
quotient of $G$.  If $H$ is torsion-free, say, $H=\Z^r$, then $\widehat{H}$
can be identified with the complex algebraic torus $(\C^{*})^{r}$.
If $A$ is a finite abelian group, then $\widehat{A}$ is, in fact,
isomorphic to $A$.

Given a homomorphism $\phi \colon G_1\to G_2$, let $\hat{\phi}\colon
\widehat{G}_2 \to \widehat{G}_1$ be the induced morphism between
character groups, given by $\hat\phi(\rho)=\rho\circ \phi$.
Since $\C^*$ is a divisible abelian group, the functor
$H\leadsto \widehat{H}=\Hom(H,\C^*)$ is exact.

Now let $T$ be an abelian, complex algebraic reductive group.
We can then associate to $T$ its weight group,
$\check{T}=\Hom_{\alg}(T,\C^*)$,
where the hom set is taken in the category of algebraic groups.
It turns out that $\check{T}$ is a finitely generated abelian group,
which can be described concretely, as follows.

According to the classification of abelian reductive groups over $\C$
(cf.~\cite{Spr}), the identity component of $T$ is an algebraic
torus, i.e., it is of the form $(\C^{*})^{r}$ for some integer $r\ge 0$.
Furthermore, this identity component has to be a normal subgroup.
Thus, the algebraic group $T$ is isomorphic to $(\C^{*})^{r}\times A$,
for some finite abelian group $A$.

The coordinate ring $\mathcal{O}[T]$ decomposes as
\begin{equation}
\label{eq:coord ring}
\mathcal{O}[(\C^{*})^{r}\times A]\cong\mathcal{O}[(\C^{*})^{r}]
\otimes\mathcal{O}[A]\cong\C[\Z^r]\otimes\C[\widehat{A}],
\end{equation}
where $\C[G]$ denotes the group ring of a group $G$.
Let $\mathcal{O}[T]^*$ be the group of units in the coordinate
ring of $T$.  By \eqref{eq:coord ring}, this group is isomorphic to
$\C^*\times \Z^r\times A$, where $\C^*$ corresponds
to the non-zero constant functions.  Taking the quotient
by this $\C^*$ factor, we get isomorphisms
\begin{equation}
\label{eq:checkt}
\check{T} \cong \mathcal{O}[T]^*/\C^* \cong \Z^r\times A.
\end{equation}
Clearly, $\Spm\, (\C[\check{T}]) = \Hom_{\alg}(\C[\check{T}], \C)
= \Hom_{\group}(\check{T}, \C^*)= T$.

Now let  $f\colon T_1\to T_2$ be a morphism in $\AbRed$.
Then the induced morphism on coordinate rings,
$f^*\colon  \mathcal{O}[T_2] \to \mathcal{O}[T_1]$,
restricts to a group homomorphism,
$f^*\colon  \mathcal{O}[T_2]^* \to \mathcal{O}[T_1]^*$,
which takes constants to constants.
Under the identification from \eqref{eq:checkt}, $f^*$ induces
a homomorphism $\check{f}\colon \check{T}_2\to \check{T}_1$
between the corresponding weight groups.

The following proposition is now easy to check.

\begin{prop}
\label{prop:abred}
The functors $H\leadsto \widehat{H}$ and $T\leadsto \check{T}$
establish a contravariant equivalence between the category
of finitely generated abelian groups and the category of
abelian reductive groups over $\C$.
\end{prop}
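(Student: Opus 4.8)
The plan is to establish the two natural isomorphisms that turn a pair of contravariant functors into a (contravariant) equivalence, namely $\check{\widehat{H}}\cong H$ and $\widehat{\check{T}}\cong T$. First I would check that the two assignments are genuine contravariant functors between the stated categories: a homomorphism $\phi\colon G_1\to G_2$ induces $\widehat{\phi}\colon\widehat{G}_2\to\widehat{G}_1$ by $\rho\mapsto\rho\circ\phi$, a morphism $f\colon T_1\to T_2$ in $\AbRed$ induces $\check{f}\colon\check{T}_2\to\check{T}_1$ as described above, and the identities $\widehat{\psi\circ\phi}=\widehat{\phi}\circ\widehat{\psi}$ together with its analogue for $\check{(-)}$ give functoriality and reversal of composition; that the target of each functor is the asserted category has already been recorded (for $\check{(-)}$, via \eqref{eq:checkt}).

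Next I would write down the evaluation maps that will serve as the natural isomorphisms. For $H\in\AbFgGp$, let $\ev_H\colon H\to\check{\widehat{H}}$ send $h$ to the character $\rho\mapsto\rho(h)$ of $\widehat{H}$; this character is a morphism of algebraic groups (under a splitting $H\cong\Z^r\times A$ it is a Laurent monomial on the torus factor and an arbitrary, hence regular, function on the finite factor), and $\ev_H$ is plainly a group homomorphism, natural in $H$. For $T\in\AbRed$, let $\ev_T\colon T\to\widehat{\check{T}}$ send $t$ to the character $\chi\mapsto\chi(t)$ of $\check{T}$; equivalently, $\ev_T$ is the morphism of affine varieties dual to the $\C$-algebra homomorphism $\C[\check{T}]\to\mathcal{O}[T]$ that sends a generator $[\chi]$ to the unit $\chi\in\mathcal{O}[T]^*$. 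In particular $\ev_T$ is a morphism of algebraic groups, and naturality in $T$ is again routine.

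It remains to prove that $\ev_H$ and $\ev_T$ are isomorphisms. Because $\Hom(-,\C^*)$ turns finite direct sums into finite products and, by \eqref{eq:checkt}, $\check{(-)}$ turns $(\C^*)^r\times A$ into $\Z^r\times\check{A}$, both evaluation maps are compatible with the decompositions $H\cong\Z^r\times A$ and $T\cong(\C^*)^r\times A$; it therefore suffices to treat the cyclic building blocks $H=\Z,\ \Z/n$ and $T=\C^*,\ \mu_n$. These cases are classical: $\widehat{\Z}=\C^*$, $\check{\C^*}=\Z$, with $\ev_{\Z}$ and $\ev_{\C^*}$ the identity; and $\widehat{\Z/n}=\mu_n$, $\check{\mu_n}=\Z/n$, with the evaluation maps isomorphisms because the characters of a finite abelian group separate its points. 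Given these natural isomorphisms, the pair of functors is a contravariant equivalence by the standard criterion. The one step needing genuine attention is that $\ev_T$ is an isomorphism of algebraic groups, not merely of abstract groups: this is exactly the identification $\Spm(\C[\check{T}])=\Hom_{\group}(\check{T},\C^*)=T$ observed above, combined with the fact that $\C[\check{T}]\to\mathcal{O}[T]$ is a ring isomorphism --- injective by linear independence of distinct characters, surjective by \eqref{eq:coord ring}. Everything else in the argument is formal.
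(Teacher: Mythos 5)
Your proof is correct and takes essentially the route the paper intends: the paper simply declares the proposition ``easy to check'' after establishing \eqref{eq:coord ring}, \eqref{eq:checkt}, and the identification $\Spm(\C[\check{T}])=\Hom_{\group}(\check{T},\C^*)=T$, and your argument via the evaluation natural transformations $\ev_H$, $\ev_T$ and reduction to the cyclic building blocks is precisely the standard verification being left to the reader, with the one genuinely non-formal point (that $\C[\check{T}]\to\mathcal{O}[T]$ is a ring isomorphism, hence $\ev_T$ is an isomorphism of algebraic groups) correctly identified and justified.
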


Recall now that the functor  $H\leadsto \widehat{H}$ is exact.
Hence, the functor $T\leadsto  \check{T}$ is also exact.

\begin{remark}
\label{rem:prods}
The above functors behave well with respect to (finite) direct products.
For instance, let $\alpha\colon A\to C$ and $\beta\colon B\to C$ be two
homomorphisms between finitely generated abelian groups,
and consider the homomorphism $\delta\colon A\times B\to C$
defined by $\delta(a,b)=\alpha(a)+\beta(b)$.
The morphism $\hat\delta\colon \widehat{C}\to \widehat{A\times B}=
\widehat{A}\times \widehat{B}$ is then given by
$\hat\delta(f)= (\hat{\alpha}(f),\hat{\beta}(f))$.
\end{remark}

\subsection{The lattice of subgroups of a finitely generated abelian group}
\label{subsec:lattice H}

Recall that a poset $(L, \leq)$ is a {\em lattice}\/ if every pair of elements
has a least upper bound and a greatest lower bound.  Define operations
$\vee$ and $\wedge$ on $L$ (called join and meet, respectively) by
$x \vee y = \text{sup}\{x,y\}$ and $x \wedge y = \text{inf}\{x,y\}$.

The lattice $L$ is called {\em modular}\/
if, whenever $x < z$, then $x \vee (y \wedge z) = (x \vee y) \wedge z$,
for all $y\in L$. The lattice $L$ is called {\em distributive}\/ if
$x \vee (y \wedge z) = (x \vee y) \wedge (x \vee z)$
and $x \wedge (y \vee z) = (x \wedge y) \vee (x \wedge z)$,
for all $x, y, z \in L$.   A lattice is modular if and only if it
does not contain the pentagon as a sublattice, whereas
a modular lattice is distributive if and only if it does not
contain the diamond as a sublattice.

Finally, $L$ is said to be a {\em ranked lattice}\/ if there is a function
$r\colon L\to \Z$ such that $r$ is constant on all minimal elements,
$r$ is monotone (if $\xi_1 \le \xi_2$ , then $r(\xi_1) \leq r(\xi_2)$),
and $r$ preserves covering relations (if  $\xi_1 \le \xi_2$, but there
is no $\xi$ such that $\xi_1 < \xi < \xi_2$, then $r(\xi_2)=r(\xi_1)+1$).

Given a group $G$, the set of subgroups of $G$ forms a lattice,
$\cL(G)$, with order relation given by inclusion. The join of two subgroups,
$\gamma_1$ and $\gamma_2$, is the subgroup generated by
$\gamma_1$ and $\gamma_2$, and their meet is the intersection
$\gamma_1 \cap \gamma_2$.  There is unique minimal element---%
the trivial subgroup, and a unique maximal element---%
the group $G$ itself. The lattice $\cL(G)$ is distributive
if and only if $G$ is locally cyclic (i.e., every finitely generated
subgroup is cyclic).  Similarly, one may define the lattice of
normal subgroups of $G$; this lattice is always modular.
We refer to \cite{Sch} for more on all this.

Now let $H$ be a finitely generated abelian group,
and let $\cL(H)$ be its lattice of subgroups. In this case,
the join of two subgroup, $\xi_1$ and $\xi_2$, equals
the sum $\xi_1 + \xi_2$.  By the above, the lattice $\cL(H)$
is always modular, but it is not a distributive lattice, unless
$H$ is cyclic.  Furthermore, $\cL(H)$ is a ranked lattice, with
rank function $\xi\mapsto \rank(\xi)=\dim_{\Q}(\xi\otimes \Q)$
enjoying the following property:
$\rank(\xi_1)+\rank(\xi_2)=
\rank(\xi_1 \wedge \xi_2)+\rank(\xi_1 \vee \xi_2)$.

\subsection{The lattice of algebraic subgroups of a complex algebraic torus}
\label{subsec:lattice T}

Now let $T$ be a complex abelian reductive group,
and let $(\cL_{\alg}(T),\le)$ be the poset of algebraic
subgroups of $T$, ordered by inclusion.
It is readily seen that $\cL_{\alg}(T)$ is a ranked modular lattice.
For any algebraic subgroups $P_1$ and $P_2$ of $T$,
the join $P_1 \vee P_2=P_1\cdot P_2$ is the algebraic subgroup
generated by the two subgroups $P_1$ and $P_2$, while
the meet $P_1 \wedge P_2=P_1 \cap P_2$ is the intersection
of the two subgroups.  Furthermore, the rank of a subgroup
$P$ is its dimension.

The next theorem shows that the natural correspondence
from Proposition \ref{prop:abred} is lattice-preserving.
Recall that, if $H$ is a finitely generated abelian group,
the character group $\widehat{H}=\Hom_{\group}(H,\C^*)$
is an abelian reductive group over $\C$, and conversely,
if $T$ is an abelian reductive group, the weight group
$\check{T}=\Hom_{\alg}(T,\C^*)$ is a finitely generated
abelian group.

\begin{theorem}
\label{theorem-isomorphic}
Suppose $H\cong\check{T}$, or equivalently, $T\cong\widehat{H}$.
There is then an order-reversing isomorphism between the
lattice of subgroups of $H$ and the lattice of algebraic
subgroups of $T$.
\end{theorem}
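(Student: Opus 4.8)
The plan is to exhibit an explicit pair of mutually inverse, order-reversing maps between $\cL(H)$ and $\cL_{\alg}(T)$, using the correspondences $V$ and $\varepsilon$ already sketched in \eqref{eq:pont dual}. Fix the identification $T=\widehat H$. For a subgroup $\xi\le H$, set $V(\xi)=\Hom_{\group}(H/\xi,\C^*)$, viewed as an algebraic subgroup of $T$ via the surjection $H\surj H/\xi$; for an algebraic subgroup $W\le T$, set $\varepsilon(W)=\ker\bigl(\check T\surj\check W\bigr)$, which under $\check T\cong H$ is a subgroup of $H$. The four things to check are: (a) $V$ indeed lands among \emph{algebraic} subgroups of $T$ and is inclusion-reversing; (b) $\varepsilon$ is inclusion-reversing; (c) $\varepsilon\circ V=\id_{\cL(H)}$; and (d) $V\circ\varepsilon=\id_{\cL_{\alg}(T)}$. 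An order-reversing bijection between posets automatically carries meets to joins and joins to meets, so once (a)--(d) are in place the lattice statement follows with no extra work.

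First I would do (a) and (b): these are immediate from functoriality of $H\leadsto\widehat H$ and $T\leadsto\check T$ together with their exactness. If $\xi_1\le\xi_2$, the composite $H\surj H/\xi_1\surj H/\xi_2$ dualizes to inclusions $V(\xi_2)\inj V(\xi_1)\inj T$, giving (a); the analogous argument with $\check{(-)}$ gives (b). For (c), apply the weight-group functor to the short exact sequence $0\to\xi\to H\to H/\xi\to 0$: exactness of $\check{(-)}$ (noted just after Proposition~\ref{prop:abred}) identifies $\check{V(\xi)}=\check{\widehat{H/\xi}}$ with $H/\xi$ and shows the restriction map $\check T=H\surj\check{V(\xi)}$ is exactly the quotient $H\surj H/\xi$, whose kernel is $\xi$; hence $\varepsilon(V(\xi))=\xi$. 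For (d), start from an algebraic subgroup $W\le T$ and the exact sequence $0\to\varepsilon(W)\to\check T\to\check W\to 0$ (exact on the right because $\check W\to\check T$ is injective, $W$ being a subgroup); dualizing back via $\widehat{(-)}$, exactness gives $V(\varepsilon(W))=\widehat{\check W}$ with the canonical map into $\widehat{\check T}=T$, and this is $W$ because the evaluation map $W\to\widehat{\check W}$ is an isomorphism of abelian reductive groups — precisely the content of Proposition~\ref{prop:abred} applied to $W$ (and concretely $\Spm\C[\check W]=W$, as recorded after \eqref{eq:checkt}).

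The step I expect to be the main obstacle is (d), and within it the claim that $\varepsilon(W)\to\check T\to\check W$ is right-exact, i.e.\ that the restriction of weights from $T$ to an algebraic subgroup $W$ is \emph{surjective}. This is the one place where we genuinely need the structure theory: writing $T\cong(\C^*)^r\times A$ and $W\cong(\C^*)^s\times B$ compatibly (every algebraic subgroup of a reductive abelian group again being reductive abelian, with identity component a subtorus that splits off a finite complement), surjectivity of $\check T\surj\check W$ amounts to the fact that every character of $W$ extends to a character of $T$ — equivalently, that the inclusion $\check W\inj\check T$ is split after extending, or at the level of lattices that $\Z^s\times B$ sits inside $\Z^r\times A$ as a ``saturated-up-to-torsion'' subgroup from which characters lift. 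I would establish this either by the explicit coordinate description in \eqref{eq:coord ring}--\eqref{eq:checkt}, or abstractly by noting that $\widehat{(-)}$ being exact forces $\check{(-)}$ to be exact (stated in the excerpt), so the surjection $\check T\surj\check W$ is nothing but exactness of $\check{(-)}$ applied to the inclusion $W\inj T$. With that surjectivity granted, the rest is bookkeeping, and the order-reversing bijection — hence the lattice isomorphism with meets and joins interchanged — is complete.
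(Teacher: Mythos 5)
Your proposal is correct and follows essentially the same route as the paper: the paper's proof also reduces to showing $\epsilon(V(\xi))=\xi$ (Lemma~\ref{lem:eve}) and that every algebraic subgroup is of the form $V(\xi)$ via surjectivity of weight restriction (Lemma~\ref{lem:pv}), then observes that a mutually inverse order-reversing pair is automatically a lattice anti-isomorphism (Lemma~\ref{lem:vlattice}). The only cosmetic difference is that the paper verifies $\epsilon(V(\xi))=\xi$ by a hands-on character-separation argument (using divisibility of $\C^*$) where you invoke exactness of $\check{(-)}$ and the naturality in Proposition~\ref{prop:abred}; both are valid and the key step you flag --- surjectivity of $\check{T}\surj\check{W}$ --- is exactly the point the paper uses (and likewise justifies by exactness) in Lemma~\ref{lem:pv}.
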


\begin{proof}
For any subgroup $\xi \le H$, let
\begin{equation}
\label{eq:ve}
V(\xi)=\Spm (\C[H /\xi])
\end{equation}
be the set of closed points of $\Spec(\C[H /\xi])$.
Clearly, the variety $V(\xi)$ embeds into $\Spm(\C[H]) \cong T$
as an algebraic subgroup.  This subgroup can be
naturally identified with the group of characters of
$H/\xi$, that is, $V(\xi)\cong \widehat{H/\xi}$, or equivalently,
\begin{equation}
\label{eq:xitv}
\widehat{\xi}\cong T/V(\xi).
\end{equation}

For any algebraic subset $W \subset T$, define
\begin{equation}
\label{eq:ev}
\epsilon(W)=\ker\, (\Hom_{\alg}(T,\C^*) \surj \Hom_{\alg}(W,\C^*)).
\end{equation}
Write $T=(\C^{*})^r \times \Z_{k_1} \times \cdots \times \Z_{k_s}$,
where $\Z_{k_i}$ embeds in $\C^*$ as the subgroup of $k_i$-th
roots of unity.  Using the standard coordinates of $(\C^{*})^{r+s}$,
we can identify $\epsilon(W)$ with the subgroup $\{ \lambda \in H :
\text{$t^{\lambda}-1$ vanishes on $W$}\}$.

The proof of the theorem is completed by the next three lemmas.
\end{proof}

\begin{lemma}
\label{lem:eve} If $\xi$ is a subgroup of $H$, then
$\epsilon(V(\xi))=\xi$.
\end{lemma}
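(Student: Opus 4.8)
The plan is to unwind both constructions appearing in the proof of Theorem~\ref{theorem-isomorphic} and reduce the claim to the fact that $\C^*$-valued characters separate points of a finitely generated abelian group.

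First I would make $V(\xi)$ concrete: under the embedding $V(\xi)=\Spm(\C[H/\xi])\hookrightarrow\Spm(\C[H])=T$, a point of $V(\xi)$ is a character $\rho\colon H\to\C^*$ that kills $\xi$, so $V(\xi)=\{\rho\in\widehat H:\rho|_\xi=1\}$. Combining this with the description of $\epsilon$ recorded in that proof --- $\epsilon(W)=\{\lambda\in H: t^{\lambda}-1\text{ vanishes on }W\}$, where $t^{\lambda}\in\mathcal{O}[T]$ is the regular function $\rho\mapsto\rho(\lambda)$ --- I get
\[
\epsilon(V(\xi))=\bigl\{\lambda\in H:\rho(\lambda)=1\text{ for every }\rho\in\widehat H\text{ with }\rho|_\xi=1\bigr\}.
\]

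The inclusion $\xi\subseteq\epsilon(V(\xi))$ is immediate from this formula. For the reverse inclusion I would take $\lambda\notin\xi$ and exhibit $\rho\in\widehat H$ with $\rho|_\xi=1$ and $\rho(\lambda)\neq 1$; since the image of $\lambda$ in $H/\xi$ is nonzero, it is enough to find a character of $H/\xi$ nontrivial on that image and pull it back along $H\twoheadrightarrow H/\xi$. This last point --- separation of points by characters into $\C^*$ --- is the only real content of the lemma; it follows from the exactness of the functor $H\leadsto\widehat H$ noted after Proposition~\ref{prop:abred} (apply it to the inclusion into $H/\xi$ of the cyclic subgroup generated by the image of $\lambda$, which embeds into $\C^*$ because $\C^*$ has elements of every finite and infinite order), or from the standard fact that $\C^*$ is a divisible, hence injective, $\Z$-module.

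A shorter route trades the separation argument for the equivalence of Proposition~\ref{prop:abred}: since $V(\xi)\cong\widehat{H/\xi}$, passing to weight groups gives $\check{V(\xi)}\cong H/\xi$, and one checks that the restriction homomorphism $\check T\to\check{V(\xi)}$ corresponds to the canonical projection $H\to H/\xi$ under the identifications $\check T\cong H$ and $\check{V(\xi)}\cong H/\xi$; its kernel is $\xi$, and $\epsilon(V(\xi))$ is by definition that kernel. I would present the first argument in the text to keep the section self-contained and leave the second as a remark. I do not anticipate any genuine obstacle here: the whole lemma is bookkeeping with the two functors plus the elementary separation-of-points input.
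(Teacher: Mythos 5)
Your proposal is correct and follows essentially the same route as the paper: the inclusion $\xi\subseteq\epsilon(V(\xi))$ is immediate, and for the reverse you separate $\lambda\notin\xi$ from $\xi$ by a character $\rho$ with $\rho|_\xi=1$ and $\rho(\lambda)\neq 1$, which is exactly the paper's argument (you merely supply the justification via divisibility of $\C^*$ that the paper leaves implicit). The alternative route through weight groups is a fine remark but not needed.
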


\begin{proof}
The inclusion $\epsilon(V(\xi)) \supseteq \xi$ is clear.
Now suppose $\lambda \in H \setminus \xi$. Then we may
define a character $\rho \in \widehat{H}$ such that
$\rho(\lambda) \neq 1$, but $\rho(\xi)=1$.
Evidently, $\lambda \not\in \epsilon(V(\xi))$, and we are done.
\end{proof}

Given two algebraic subgroups, $P$ and $Q$ of $T$, let
$\Hom_{\alg}(P, Q)$ be the set of morphisms from $P$ to
$Q$ which preserve both the algebraic and multiplicative structure.

\begin{lemma}
\label{lem:pv}
Any algebraic subgroup $P$ of $T$ is of the
form $V(\xi)$, for some subgroup $\xi \subseteq H$.
\end{lemma}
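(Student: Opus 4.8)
The goal is to show that every algebraic subgroup $P\le T$ arises as $V(\xi)$ for a suitable $\xi\le H$. Given Lemma~\ref{lem:eve}, the natural candidate is $\xi:=\epsilon(P)$; so the plan is to take that $\xi$ and prove $V(\epsilon(P))=P$. One inclusion is immediate: since every function $t^\lambda-1$ with $\lambda\in\epsilon(P)$ vanishes on $P$ by definition of $\epsilon$, and since $V(\xi)=\Spm(\C[H/\xi])$ is exactly the common zero locus inside $T=\Spm(\C[H])$ of the ideal generated by these functions, we get $P\subseteq V(\epsilon(P))$. The content is the reverse inclusion.

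For the reverse inclusion, the approach is to use the exactness of the functor $H\leadsto\widehat H$ together with the separation of points by characters. Concretely, write $P=V(\xi')$ would be circular, so instead I would argue directly: the coordinate ring $\mathcal O[P]$ is the quotient of $\mathcal O[T]=\C[H]$ by the ideal $I(P)$ of functions vanishing on $P$, and $\mathcal O[P]^*/\C^*\cong\check P$ is itself a finitely generated abelian group (since $P$, being an algebraic subgroup of an abelian reductive group, is again abelian reductive, by the classification recalled in \S\ref{subsec:abelredgp}). The restriction map $\check T\surj\check P$ is surjective because a character of $P$ extends to a character of $T$ --- here one uses that $\C^*$ is divisible/injective as an abelian group, so the restriction $\Hom(H,\C^*)\to\Hom(\epsilon(P)^{\complement}\!,\dots)$; more precisely, $\Hom_{\group}(-,\C^*)$ is exact, hence $\Hom_{\group}(H,\C^*)\surj\Hom_{\group}(H/\epsilon(P),\C^*)$ has the right image, and one checks the kernel of $\check T\surj\check P$ is precisely $\epsilon(P)$. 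Thus $\check P\cong H/\epsilon(P)=\check{V(\epsilon(P))}$, and applying $\Spm(\C[-])$ (which by Proposition~\ref{prop:abred} is an equivalence, so faithful on objects) gives $P\cong V(\epsilon(P))$ as algebraic groups; but both are subgroups of $T$ with $P\subseteq V(\epsilon(P))$ and equal weight groups, forcing equality.

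The step I expect to be the main obstacle is the identification $\ker(\check T\surj\check P)=\epsilon(P)$ --- that is, verifying that the algebraic characters of $T$ restricting trivially to $P$ are exactly the $\lambda\in H$ with $t^\lambda-1$ vanishing on $P$. The inclusion $\epsilon(P)\subseteq\ker$ is the definition; for the other direction one must know that $\mathcal O[P]^*/\C^*$ really is the full weight group $\check P$ and that the natural map $\check T\to\check P$ is nothing but restriction of the monomial functions $t^\lambda$, so that $t^\lambda|_P$ being constant (equivalently a unit that is $1$ at the identity) is the same as $t^\lambda-1$ vanishing on $P$. This rests on the structural description of $\mathcal O[T]^*$ in \eqref{eq:coord ring}--\eqref{eq:checkt} and its analogue for $P$, together with the fact that $P$ is reductive abelian so its unit group modulo constants is its weight lattice. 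Once that bookkeeping is in place, the rest is formal: exactness of $\widehat{(-)}$ handles surjectivity of $\check T\surj\check P$, and Proposition~\ref{prop:abred} upgrades the resulting isomorphism of weight groups to an isomorphism of algebraic groups compatible with the inclusions into $T$.
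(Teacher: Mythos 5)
Your proposal is correct and follows essentially the same route as the paper: the paper's proof applies $\Hom_{\alg}(-,\C^*)$ to the inclusion $\iota\colon P\inj T$, obtains an epimorphism $\iota^*\colon H\surj\Hom_{\alg}(P,\C^*)$, and sets $\xi=\ker(\iota^*)$ --- which is precisely your $\epsilon(P)$ by the definition in \eqref{eq:ev} --- concluding $P=V(\xi)$ via the duality of Proposition~\ref{prop:abred}. The step you flag as the main obstacle (identifying the kernel of restriction with $\epsilon(P)$) is built into the paper's definition of $\epsilon$, so your argument just unpacks what the paper takes as given.
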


\begin{proof}
Let $\iota\colon P \inj T$ be the inclusion map.  Applying
the functor $\Hom_{\alg}(-, \C^*)$, we obtain an epimorphism
$\iota^*\colon H \surj \Hom_{\rm alg}(P, \C^*)$.
Set $\xi=\ker(\iota^*)$;  then $P=V(\xi)$.
\end{proof}

The above two lemmas (which generalize Lemmas 3.1 and
3.2 from \cite{Hi}), show that we have a natural correspondence
between algebraic subgroups of $T$ and subgroups of $H$.
This correspondence preserves the lattice structure on both
sides. That is, if $\xi_1\le \xi_2$, then $V(\xi_1)\supseteq V(\xi_2)$,
and similarly,  if $P_1\subseteq P_2$, then
$\epsilon(P_1)\ge \epsilon(P_2)$.

\begin{lemma}
\label{lem:vlattice}
The natural correspondence between algebraic
subgroups of $T$ and subgroups of $H$ is an
order-reversing lattice isomorphism. In particular,
\[
V(\xi_1 + \xi_2)=V(\xi_1) \cap V(\xi_2)
\quad\text{and}\quad V(\xi_1 \cap \xi_2)=V(\xi_1)\cdot
V(\xi_2).
\]
\end{lemma}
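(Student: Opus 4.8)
The plan is to squeeze everything out of Lemmas~\ref{lem:eve} and \ref{lem:pv}. Together these say that $\xi \mapsto V(\xi)$ and $W \mapsto \epsilon(W)$ are mutually inverse bijections between the subgroups of $H$ and the algebraic subgroups of $T$: Lemma~\ref{lem:eve} gives $\epsilon \circ V = \id$ directly, and combining Lemma~\ref{lem:pv} with Lemma~\ref{lem:eve} gives $V \circ \epsilon = \id$ (every algebraic subgroup $P$ equals $V(\xi)$ for some $\xi$, and then $V(\epsilon(P)) = V(\epsilon(V(\xi))) = V(\xi) = P$). As was already observed just before the lemma, both maps are order-reversing. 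So the first step is to record that $V$ is an anti-isomorphism of posets between $\cL(H)$ and $\cL_{\alg}(T)$.

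The core of the argument is then the elementary fact that an anti-isomorphism of posets whose source and target are both lattices — which holds here by \S\ref{subsec:lattice H} and \S\ref{subsec:lattice T} — automatically carries joins to meets and meets to joins. Concretely, I would verify the first displayed identity as follows. Since $\xi_i \le \xi_1 + \xi_2$ for $i = 1,2$, order-reversal gives $V(\xi_1 + \xi_2) \subseteq V(\xi_1) \cap V(\xi_2)$. Conversely, any algebraic subgroup $P$ with $P \subseteq V(\xi_1) \cap V(\xi_2)$ has the form $P = V(\zeta)$ by Lemma~\ref{lem:pv}; applying $\epsilon$ and using Lemma~\ref{lem:eve}, the inclusions $V(\zeta) \subseteq V(\xi_i)$ force $\xi_i \le \zeta$, hence $\xi_1 + \xi_2 \le \zeta$, hence $P = V(\zeta) \subseteq V(\xi_1 + \xi_2)$. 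Taking $P = V(\xi_1) \cap V(\xi_2)$ (an algebraic subgroup, being the meet in $\cL_{\alg}(T)$) gives the reverse inclusion, so $V(\xi_1 + \xi_2) = V(\xi_1) \cap V(\xi_2)$. Alternatively, this identity drops out at once from the concrete description following \eqref{eq:ve}--\eqref{eq:xitv} of $V(\xi)$ as the group $\{\rho \in \widehat{H} : \rho|_\xi = 1\}$ of characters trivial on $\xi$, because a homomorphism vanishes on $\xi_1 + \xi_2$ exactly when it vanishes on both $\xi_1$ and $\xi_2$.

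For the second identity, the inclusion $V(\xi_1)\cdot V(\xi_2) \subseteq V(\xi_1 \cap \xi_2)$ is again just order-reversal of $\xi_1 \cap \xi_2 \le \xi_i$, since $V(\xi_1 \cap \xi_2)$ is a subgroup containing $V(\xi_1)$ and $V(\xi_2)$. For the reverse inclusion I would dualize the previous argument: from $V(\xi_i) \subseteq V(\xi_1)\cdot V(\xi_2)$ and Lemma~\ref{lem:eve}, order-reversal of $\epsilon$ yields $\epsilon\bigl(V(\xi_1)\cdot V(\xi_2)\bigr) \le \xi_i$ for $i=1,2$, hence $\epsilon\bigl(V(\xi_1)\cdot V(\xi_2)\bigr) \le \xi_1 \cap \xi_2$; applying $V$ and using $V \circ \epsilon = \id$ then gives $V(\xi_1 \cap \xi_2) \subseteq V(\xi_1)\cdot V(\xi_2)$. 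This finishes the lemma and, together with Lemmas~\ref{lem:eve} and \ref{lem:pv}, completes the proof of Theorem~\ref{theorem-isomorphic}.

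I do not expect a genuine obstacle here: the substantive content sits in the two preceding lemmas, and what is left is purely formal order theory. The only points that call for a moment's care are checking that $V$ and $\epsilon$ are honest two-sided inverses (which needs \emph{both} Lemma~\ref{lem:eve} and Lemma~\ref{lem:pv}), and noting that the join $V(\xi_1)\cdot V(\xi_2)$ and the meet $V(\xi_1)\cap V(\xi_2)$ actually lie in $\cL_{\alg}(T)$, so that $\epsilon$ is defined on them — both already guaranteed by \S\ref{subsec:lattice T}.
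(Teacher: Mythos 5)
Your proposal is correct and follows exactly the route the paper takes: its proof simply says that the first claim follows from Lemmas~\ref{lem:eve} and \ref{lem:pv} and that the two displayed equalities are consequences, which is precisely the bijection-plus-order-reversal argument you spell out in detail. Your elaboration (including the check that the meet and join lie in $\cL_{\alg}(T)$ so that $\epsilon$ applies) is a faithful, more explicit version of the same proof.
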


\begin{proof}
The first claim follows from the two lemmas above.
The two equalities are consequences of this.
\end{proof}

The above correspondence reverses ranks, i.e.,
$\rank(\xi) = \codim V(\xi)$ and $\corank(\xi) = \dim V(\xi)$.

\subsection{Counting algebraic subtori}
\label{subsec:count}

As a quick application, we obtain a counting formula
for the number of algebraic subgroups of an $r$-dimensional
complex algebraic torus, having precisely $k$ connected
components, and a fixed, $n$-dimensional subtorus as
the identity component.  It is convenient to restate such
a problem in terms of a zeta function.

\begin{definition}
\label{def:zeta}
Let $T$ be an abelian reductive group, and let
$T_0$ be a fixed connected algebraic subgroup.
Define the zeta function of this pair as
\[
\zeta(T, T_0, s)=\sum_{k=1}^{\infty}\frac{a_k(T,T_0)}{k^s},
\]
where $a_k(T,T_0)$ is the number of algebraic subgroups
$W \leq T$ with identity component equal to $T_0$ and
such that $\abs{W/T_0}=k$.
\end{definition}

This definition is modeled on that of the zeta function of a
finitely generated group $G$, which is given by
$\zeta(G, s)=\sum_{k=1}^{\infty} a_k(G)k^{-s}$, where
$a_k(G)$ is the number of index-$k$ subgroups of $G$,
see for instance \cite{SW}.

\begin{corollary}
\label{cor:zetas}
Suppose $T \cong (\C^{*})^{r}$ and $T_0 \cong (\C^{*})^{n}$, for some
$0\leq n\leq r$.  Then $\zeta(T, T_0, s)=\zeta(s)\zeta(s-1)\cdots
\zeta(s-r+n+1)$, where $\zeta(s)=\sum_{k=1}^{\infty}k^{-s}$
is the usual Riemann zeta function.
\end{corollary}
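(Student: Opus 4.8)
The plan is to translate the counting problem on the $T$-side into a counting problem on the $H$-side via Theorem~\ref{theorem-isomorphic}, and then invoke the classical formula for the zeta function of $\Z^n$. First I would use the order-reversing lattice isomorphism $\xi \leftrightsquigarrow V(\xi)$ together with $H \cong \check T \cong \Z^r$: an algebraic subgroup $W = V(\xi)$ has identity component $T_0 = V(\overline\xi)$, so the condition that the identity component of $W$ be a \emph{fixed} $n$-dimensional subtorus $T_0 \cong (\C^*)^n$ corresponds to fixing a primitive sublattice $\overline\xi \le \Z^r$ of corank $n$ (equivalently rank $r-n$), which after a change of basis we may take to be $\Z^{r-n} \le \Z^{r-n} \times \Z^n = \Z^r$. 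The group of components $W/T_0$ is dual to $\overline\xi/\xi$ (this is the determinant group discussed in \S\ref{subsec:intro1}, and $\widehat{A} \cong A$ for finite $A$), so $|W/T_0| = k$ translates to $[\overline\xi : \xi] = k$. Hence $a_k(T,T_0)$ equals the number of subgroups $\xi \le \Z^r$ with $\overline\xi = \Z^{r-n}$ and $[\Z^{r-n}:\xi] = k$.

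Next I would identify that count concretely. A subgroup $\xi$ with $\overline\xi = \Z^{r-n}$ and $[\Z^{r-n}:\xi]=k$ is exactly the preimage under the inclusion $\Z^{r-n} \hookrightarrow \Z^r$... more precisely, since $\xi \subseteq \overline\xi = \Z^{r-n}$ and $\xi$ has finite index $k$ in $\Z^{r-n}$ with $\xi$ primitive-closure equal to all of $\Z^{r-n}$ (automatic once $\xi$ has finite index in $\Z^{r-n}$), the set of such $\xi$ is precisely the set of index-$k$ subgroups of $\Z^{r-n}$. Therefore $a_k(T,T_0) = a_k(\Z^{r-n})$, the number of index-$k$ subgroups of the free abelian group of rank $r-n$.

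The final step is purely the classical result recalled in the paper: the subgroup zeta function of $\Z^m$ is $\zeta(\Z^m, s) = \zeta(s)\zeta(s-1)\cdots\zeta(s-m+1)$ (see \cite{SW}). Applying this with $m = r-n$ gives
\[
\zeta(T,T_0,s) = \sum_{k=1}^\infty \frac{a_k(\Z^{r-n})}{k^s}
= \zeta(s)\zeta(s-1)\cdots\zeta(s-(r-n)+1)
= \zeta(s)\zeta(s-1)\cdots\zeta(s-r+n+1),
\]
as claimed. (When $n = r$ the product is empty, consistently with the only subgroup being $T_0$ itself, so $\zeta(T,T_0,s) = 1$.)

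The only real content is the dictionary step: checking that $W/T_0 \cong \widehat{\overline\xi/\xi}$ and that, after choosing coordinates so that $T_0$ becomes a coordinate subtorus, "subgroups $\xi$ of $\Z^r$ with $\overline\xi$ a fixed corank-$n$ primitive sublattice and $[\overline\xi:\xi]=k$" is the same data as "index-$k$ subgroups of $\Z^{r-n}$." I expect the mild subtlety to be making sure there is no overcounting from automorphisms when we fix $T_0$ (there isn't: $T_0$ is fixed as a \emph{subgroup}, not up to isomorphism, so $\overline\xi$ is literally a single fixed sublattice and we are genuinely just counting its finite-index subgroups); once that is pinned down the rest is a citation.
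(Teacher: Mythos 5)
Your proof is correct and follows essentially the same route as the paper: translate via the duality of Theorem~\ref{theorem-isomorphic} (together with Lemma~\ref{lem:vxi}, which gives $W/T_0\cong\widehat{\overline{\xi}/\xi}$) into counting index-$k$ sublattices of a rank-$(r-n)$ lattice, and then cite the Bushnell--Reiner formula for $\zeta(\Z^{r-n},s)$. The only cosmetic difference is that the paper first treats $n=0$ and then handles $n>0$ by passing to the quotient $T/T_0$, whereas you identify the relevant subgroups $\xi$ directly as the finite-index subgroups of the fixed primitive sublattice $\epsilon(T_0)\cong\Z^{r-n}$; both arguments land in the same place.
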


\begin{proof}
First assume $n=0$, so that $T_0=\{1\}$. By
Theorem \ref{thm: equivalence},
$a_k(T,\{1\})$ is the number of algebraic
subgroups $W \leq T$ of the form
$W=V(\xi)$, where $\xi\le \Z^r$ and $[\Z^r : \xi]=k$.
Clearly, this number equals $a_k(\Z^r)$, and
so $\zeta(T, \{1\}, s)=\zeta(\Z^r, s)$.  By a classical result of
Bushnell and Reiner (see \cite{SW}), we have that
$\zeta(\Z^r, s)=\zeta(s)\zeta(s-1)\cdots \zeta(s-r+1)$.
This proves the claim for $n=0$.

For $n > 0$, we simply take the quotient of the group $T$
by the fixed subtorus $T_0$, to get
$\zeta(T, T_0, s)=\zeta(T/T_0, \{1\}, s)=\zeta(\Z^{r-n}, s)$.
This ends the proof.
\end{proof}

\section{Primitive lattices and connected subgroups}
\label{sect:primitive}

In this section, we show that the correspondence between $\cL(H)$
and  $\cL_{\alg}(T)$ restricts to a correspondence between
the primitive subgroups of $H$ and the connected algebraic
subgroups of $T$.  We also explore the relationship between
the connected components of $V(\xi)$ and the determinant
group, $\overline{\xi}/\xi$, of a subgroup $\xi\le H$.

\subsection{Primitive subgroups}
\label{subsec:primitive}
As before, let $H$ be a finitely generated abelian group.
We say that a subgroup $\xi \le H$ is {\em primitive}\/
if there is no other subgroup $\xi'\le H$ with $\xi< \xi'$ and
$[\xi' : \xi] <\infty$.  In particular, a primitive subgroup
must contain the torsion subgroup, $\Tors(H)=\{\lambda\in H \mid
\exists\, n \in \N \hbox{ such that } n\lambda =0\}$.

The intersection of two primitive subgroups is again
a primitive subgroup, but the sum of two primitive
subgroups need not be primitive (see, for instance,
the proof of Corollary \ref{cor:dimtori}).   Thus, the set
of primitive subgroups of $H$ is not necessarily
a sublattice of $\cL(H)$.

When $H$ is free abelian,
then all subgroups $\xi \le H$ are also free abelian.  In this case,
$\xi$ is primitive if and only if it has a basis that can be extended to
a basis of $H$, or, equivalently, $H/\xi$ is torsion-free.
It is customary to call such a subgroup a {\em primitive lattice}.

Returning to the general situation, let $H$ be a finitely generated
abelian group.  Given an arbitrary subgroup $\xi \le H$,
define its primitive closure, $\overline{\xi}$, to be the
smallest primitive subgroup of $H$ containing $\xi$.  Clearly,
\begin{equation}
\label{eq:bar epsilon}
\overline{\xi}=\{\lambda \in H: \exists\,
n \in \N \hbox{ such that } n\lambda \in \xi\}.
\end{equation}

Note that $H/\overline{\xi}$ is torsion-free, and thus we have
a split exact sequence, \\[-5pt]
\begin{equation}
\label{eq:split}
\xymatrix{0 \ar[r] & \overline{\xi}  \ar[r] & H
\ar[r]&H/\overline{\xi}\ar@/_15pt/@{-->}[l]   \ar[r]& 0}.
\end{equation}

By definition, $\xi$ is a finite-index subgroup of $\overline{\xi}$; in
particular, $\rank(\xi) = \rank(\overline{\xi})$.  We call the quotient
group, $\overline{\xi}/\xi$, the {\em determinant group}\/ of $\xi$.
We have an exact sequence,
\begin{equation}
\label{eq:ex}
\xymatrix{0 \ar[r] & H/\overline{\xi}  \ar[r] & H/\xi
\ar[r]&\overline{\xi}/\xi  \ar[r]& 0},
\end{equation}
with $\overline{\xi}/\xi$ finite.  The inclusion $\overline{\xi} \inj H$
induces a monomorphism $\overline{\xi}/\xi \inj H/\xi$,
which yields a splitting for the above sequence,
showing that $\overline{\xi}/\xi \cong \Tors(H/\xi)$.
Since the group $\overline{\xi}/\xi$ is finite, it is
isomorphic to its character group,
$\widehat{\overline{\xi}/\xi}$, which in turn can be
viewed as a (finite) subgroup of $\widehat{H}=T$.

Using an approach similar
to the one from \cite[Lemma 3.3]{Hi}, we sharpen
and generalize that result, as follows.

\begin{lemma}
\label{lem:vxi}
For every subgroup $\xi \le H$, we have an isomorphism
of algebraic groups,
\begin{equation}
\label{eq:vdecomp}
V(\xi) \cong \widehat{\overline{\xi}/\xi} \cdot V(\overline{\xi}).
\end{equation}
Moreover,
\begin{enumerate}
\item \label{vx1}
$V(\xi)=\bigcup_{\rho \in \widehat{\overline{\xi}/\xi}}\rho V(\overline{\xi})$
is the decomposition of $V(\xi)$ into irreducible components, with
$V(\overline{\xi})$ as the component of the identity.

\item  \label{vx2}
$V(\xi)/V(\overline{\xi}) \cong \widehat{\overline{\xi}/\xi}$.  In particular,
if $\rank \xi= \dim H$, then $V(\xi) \cong \widehat{\overline{\xi}/\xi}$.
\end{enumerate}
\end{lemma}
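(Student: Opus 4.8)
The plan is to deduce everything from the split exact sequence \eqref{eq:ex}, namely $0 \to H/\overline{\xi} \to H/\xi \to \overline{\xi}/\xi \to 0$, by applying the exact contravariant functor $G \leadsto \widehat{G}$. Since this functor is exact (as noted after Proposition~\ref{prop:abred}) and carries direct sums to direct products, and since the sequence splits, we obtain a split short exact sequence of abelian reductive groups
\[
0 \to \widehat{\overline{\xi}/\xi} \to \widehat{H/\xi} \to \widehat{H/\overline{\xi}} \to 0,
\]
which, after unwinding the definitions $V(\xi) = \widehat{H/\xi}$ and $V(\overline{\xi}) = \widehat{H/\overline{\xi}}$, reads $V(\xi) \cong \widehat{\overline{\xi}/\xi} \times V(\overline{\xi})$ as algebraic groups; more precisely, since $\widehat{\overline{\xi}/\xi}$ sits inside $\widehat{H} = T$ as the image of $V(\xi) \hookrightarrow T$ composed with the inclusion coming from $\overline{\xi}/\xi \hookrightarrow H/\xi$, the subgroup $\widehat{\overline{\xi}/\xi}$ is genuinely a subgroup of $V(\xi)$ inside $T$, and the splitting identifies $V(\xi)$ with the internal product $\widehat{\overline{\xi}/\xi} \cdot V(\overline{\xi})$. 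This gives the displayed isomorphism \eqref{eq:vdecomp}.

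For part~\eqref{vx1}, I would first observe that $V(\overline{\xi})$ is connected: indeed $\overline{\xi}$ is primitive, so $H/\overline{\xi}$ is finitely generated torsion-free, hence free, so $\widehat{H/\overline{\xi}}$ is a torus $(\C^*)^m$, which is irreducible. (This is exactly the assertion, stated in the introduction, that primitive subgroups correspond to connected algebraic subgroups; I would either cite that or give the one-line argument.) Then, writing $\widehat{\overline{\xi}/\xi} = \{\rho_1, \dots, \rho_N\}$ as a finite set of elements of $T$, the decomposition $V(\xi) = \bigcup_i \rho_i V(\overline{\xi})$ from the internal product expresses $V(\xi)$ as a finite union of translates of the irreducible subvariety $V(\overline{\xi})$. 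Each $\rho_i V(\overline{\xi})$ is irreducible (translation is an isomorphism) and of the same dimension; since the $\rho_i$ lie in distinct cosets of $V(\overline{\xi})$ in $V(\xi)$ — because $\overline{\xi}/\xi \hookrightarrow H/\xi$ is injective, so the $\rho_i$ are distinct modulo $V(\overline{\xi})$ — the translates $\rho_i V(\overline{\xi})$ are pairwise disjoint. A finite disjoint union of irreducible closed sets of equal dimension is precisely the irreducible-component decomposition, and $V(\overline{\xi})$ (the piece containing $1$) is the identity component. Part~\eqref{vx2} is then immediate: the quotient group $V(\xi)/V(\overline{\xi})$ is the (finite) group of connected components, which by the internal product description is exactly $\widehat{\overline{\xi}/\xi}$; and when $\rank \xi = \dim H$ we have $\rank \overline{\xi} = \dim H$ too, so $H/\overline{\xi}$ is torsion-free of rank $0$, i.e.\ trivial, whence $V(\overline{\xi})$ is a point and $V(\xi) \cong \widehat{\overline{\xi}/\xi}$.

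The one point requiring care — the main (mild) obstacle — is the bookkeeping of \emph{which} copy of $\widehat{\overline{\xi}/\xi}$ appears: the functor $\widehat{(-)}$ only gives an abstract isomorphism $V(\xi) \cong \widehat{\overline{\xi}/\xi} \times V(\overline{\xi})$, and I want the product to be \emph{internal} inside $T$, with $\widehat{\overline{\xi}/\xi}$ identified with the literal finite subgroup of $T$ described in \S\ref{subsec:primitive}. This is handled by naturality: applying $\widehat{(-)}$ to the commuting diagram relating $\xi \le \overline{\xi} \le H$ (and the surjections $H/\xi \twoheadrightarrow \overline{\xi}/\xi$, $H \twoheadrightarrow H/\xi$) shows that the inclusion $V(\overline{\xi}) \hookrightarrow V(\xi)$ and the map $\widehat{\overline{\xi}/\xi} \hookrightarrow V(\xi)$ are both compatible with the inclusions into $T = \widehat{H}$, so the abstract splitting is realized by the internal multiplication map in $T$. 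Everything else is a routine translation between the group-theoretic side (the exact sequence \eqref{eq:ex} and $\overline{\xi}/\xi \cong \Tors(H/\xi)$) and the geometric side via the dictionary of Theorem~\ref{theorem-isomorphic}.
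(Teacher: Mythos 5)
Your proposal is correct and follows essentially the same route as the paper: apply the exact functor $\Hom(-,\C^*)$ to the split exact sequence \eqref{eq:ex} to get the internal product decomposition, then use that $H/\overline{\xi}$ is torsion-free (so $V(\overline{\xi})$ is a connected torus) to read off the irreducible components and the component group. The extra care you take with the naturality of the splitting and the disjointness of the cosets is exactly the detail the paper compresses into ``Claims \eqref{vx1} and \eqref{vx2} readily follow.''
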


\begin{proof}
As noted in \S\ref{subsec:abelredgp}, the functor $\Hom(-,\C^{*})$ is
exact.  Applying this functor to sequence \eqref{eq:ex}, we obtain
an exact sequence in $\AbRed$,
\begin{equation}
\label{eq:ex2}
\xymatrixrowsep{6pt}
\xymatrix{0 \ar[r] & \widehat{\overline{\xi}/\xi}   \ar[r] & \widehat{H/\xi}
\ar@{=}[d]
\ar[r]& \widehat{H/\overline{\xi}}\ar@{=}[d]  \ar[r]& 0\\
& & V(\xi) & V(\overline{\xi})
}.
\end{equation}
Since sequence \eqref{eq:ex} is split, sequence \eqref{eq:ex2} is
also split, and thus we get decomposition \eqref{eq:vdecomp}.

Now recall that $H/\overline{\xi}$ is torsion-free; thus,
$V(\overline{\xi})=\Spm (\C[H/\overline{\xi}]) $ is a
connected algebraic subgroup of $T$.
Claims \eqref{vx1} and \eqref{vx2} readily follow.
\end{proof}

\begin{example}
\label{ex:toy}
Let $H=\Z$ and identify $\widehat{H}=\C^*$.
If $\xi=2\Z$, then $V(\xi)=\{\pm 1\} \subset \C^*$,
whereas $\overline{\xi}=H$ and
$V(\overline{\xi})=\{ 1\} \subset \C^*$.
\end{example}

Clearly, the subgroup $\xi$ is primitive if and only if $\xi=\overline{\xi}$.
Thus, $\xi$ is primitive if and only if the
variety $V(\xi)$ is connected. Putting things together, we obtain
the following corollary to Theorem \ref{theorem-isomorphic}
and Lemma \ref{lem:vxi}.

\begin{corollary}
\label{cor:primitive}
Let $H$ be a finitely generated abelian group, and let
$T=\widehat{H}$.
The natural correspondence between $\cL(H)$ and  $\cL_{\alg}(T)$
restricts to a correspondence between the primitive subgroups
of $H$ and the connected algebraic subgroups of $T$.
\end{corollary}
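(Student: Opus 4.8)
The plan is to combine the order-reversing lattice isomorphism $\xi \leftrightarrow V(\xi)$ of Theorem~\ref{theorem-isomorphic} with the structural description of $V(\xi)$ furnished by Lemma~\ref{lem:vxi}. Since that isomorphism is in particular a bijection between the set of all subgroups of $H$ and the set of all algebraic subgroups of $T$, it suffices to show that this bijection carries the subset of primitive subgroups onto the subset of connected algebraic subgroups; equivalently, that for a subgroup $\xi \le H$ one has $\xi$ primitive if and only if $V(\xi)$ is connected.

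First I would treat the forward direction: if $\xi$ is primitive then $\overline{\xi}=\xi$, so Lemma~\ref{lem:vxi}\eqref{vx1} reads $V(\xi)=V(\overline{\xi})$, and (as in the proof of that lemma) $V(\overline{\xi})=\Spm(\C[H/\overline{\xi}])$ is connected because $H/\overline{\xi}$ is torsion-free, so $\C[H/\overline{\xi}]$ is a domain. Conversely, if $V(\xi)$ is connected, then its decomposition into irreducible components given by Lemma~\ref{lem:vxi}\eqref{vx1} has a single term, so the finite group $\widehat{\overline{\xi}/\xi}$ is trivial; since a finite abelian group is isomorphic to its own character group, $\overline{\xi}/\xi=0$, i.e.\ $\xi=\overline{\xi}$ is primitive. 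Finally, to see that the correspondence is onto the connected subgroups, I would invoke Lemma~\ref{lem:pv}: an arbitrary connected algebraic subgroup $P \le T$ equals $V(\xi)$ for some subgroup $\xi \le H$, and by the equivalence just established that $\xi$ must be primitive.

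There is essentially no serious obstacle here; the one point that deserves a word of care is the tacit identification, for algebraic subgroups of $T$, of ``connected'' with ``irreducible as a variety'' — this is standard for algebraic groups over $\C$ (the identity component is open and closed, the components are its translates, and an algebraic group is smooth, hence its connected components are irreducible), and it is exactly what legitimizes passing between Lemma~\ref{lem:vxi}\eqref{vx1} and the notion of connectedness. Beyond that, the corollary is a direct consequence of Theorem~\ref{theorem-isomorphic} and Lemma~\ref{lem:vxi}, so a two-line proof should suffice.
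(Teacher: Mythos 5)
Your proposal is correct and follows essentially the same route as the paper: the paper likewise deduces the corollary from Theorem~\ref{theorem-isomorphic} together with Lemma~\ref{lem:vxi}, observing that $\xi$ is primitive iff $\xi=\overline{\xi}$ iff $V(\xi)$ is connected. Your added details (the explicit converse via triviality of $\widehat{\overline{\xi}/\xi}$, surjectivity via Lemma~\ref{lem:pv}, and the connected-versus-irreducible remark) merely flesh out what the paper leaves implicit.
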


\subsection{The dual lattice}
\label{subsec:dual}

Given an abelian group $A$, let $A^{\vee}=\Hom(A, \Z)$
be the dual group.  Clearly, if $H$ is a finitely generated abelian group,
then $H^{\vee}$ is torsion-free, with $\rank H^{\vee}=\rank H$.

Now suppose $\xi \leq H$ is a subgroup.  By passing to duals,
the projection map $\pi\colon H\surj H/\xi$ yields a monomorphism
$\pi^{\vee}\colon (H/\xi)^{\vee}\inj H^{\vee}$.  Thus, $(H/\xi)^{\vee}$
can be viewed in a natural way as a subgroup of $H^{\vee}$.
In fact, more is true.

\begin{lemma}
\label{lem:dual lattice}
Let $H$ be a finitely generated abelian group, and
let $\xi \leq H$ be a subgroup.  Then $(H/\xi)^{\vee}$ is a
primitive lattice in $H^{\vee}$.
\end{lemma}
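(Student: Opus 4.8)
The plan is to reduce to the free abelian case and then argue directly with bases. First I would replace $H$ by $H/\Tors(H)$: since $\Hom(-,\Z)$ kills torsion, the projection $H \surj H/\Tors(H)$ induces an isomorphism $H^{\vee} \isom (H/\Tors(H))^{\vee}$, and likewise $(H/\xi)^{\vee} \cong (H/(\xi + \Tors(H)))^{\vee}$; moreover the image of $(H/\xi)^\vee$ in $H^\vee$ is unchanged under this identification. So without loss of generality $H$ is free abelian, say $H = \Z^r$, and $\xi$ is an arbitrary subgroup.

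Next I would pass from $\xi$ to its primitive closure $\overline{\xi}$. Because $\xi$ has finite index in $\overline{\xi}$, the quotient map $H/\xi \surj H/\overline{\xi}$ has finite kernel $\overline{\xi}/\xi$, hence induces an isomorphism on $\Hom(-,\Z)$; thus $(H/\xi)^{\vee}$ and its image in $H^{\vee}$ coincide with those of $(H/\overline{\xi})^{\vee}$. So it suffices to treat the case where $\xi$ is a primitive lattice in $\Z^r$. In that case $H/\xi$ is free abelian, and by \eqref{eq:split} applied to $\xi$ (or simply by choosing a basis of $\Z^r$ extending a basis of $\xi$) we get a splitting $H \cong \xi \oplus (H/\xi)$. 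Dualizing this direct sum decomposition gives $H^{\vee} \cong \xi^{\vee} \oplus (H/\xi)^{\vee}$ as a direct sum, and under $\pi^{\vee}$ the subgroup $(H/\xi)^{\vee}$ is exactly the second summand. A direct summand of a free abelian group is primitive: its complement witnesses that $H^{\vee}/(H/\xi)^{\vee} \cong \xi^{\vee}$ is torsion-free, which is precisely the criterion for a primitive lattice recalled in \S\ref{subsec:primitive}.

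The only real point requiring care — and the step I expect to be the main obstacle — is checking that the \emph{image} of $(H/\xi)^{\vee}$ in $H^{\vee}$ is genuinely stable under the two reduction steps (quotienting by torsion, and replacing $\xi$ by $\overline{\xi}$), i.e. that I am allowed to replace $\xi$ by $\overline{\xi}$ "inside $H^{\vee}$" and not merely abstractly. This is handled by the naturality of the map $\pi^{\vee}\colon (H/\xi)^{\vee}\inj H^{\vee}$ with respect to the quotient maps $H/\xi \surj H/\overline{\xi}$ and $H \surj H/\Tors(H)$: in each case one has a commuting square, and the induced maps on the relevant $\Hom(-,\Z)$ groups are isomorphisms because the kernels are finite (resp. torsion). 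Once this bookkeeping is in place, the free-and-primitive case above finishes the proof. An even slicker alternative, avoiding the reductions, is to observe directly that $H^\vee / (H/\xi)^\vee$ injects into $\Hom(\xi,\Z) = \xi^\vee$ via restriction of homomorphisms; since $\xi^\vee$ is torsion-free, so is the subgroup $H^\vee/(H/\xi)^\vee$, giving primitivity at once. I would likely present this direct argument as the main line and relegate the reductions to a remark.
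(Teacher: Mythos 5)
Your proposal is correct, and the direct argument you single out as your main line --- the quotient $H^{\vee}/(H/\xi)^{\vee}$ injects into $\xi^{\vee}$ via restriction of homomorphisms, hence is torsion-free, hence $(H/\xi)^{\vee}$ is primitive --- is essentially the paper's own proof, which packages the same injection as part of the long exact sequence obtained by applying $\Hom(-,\Z)$ to $0\to\xi\to H\to H/\xi\to 0$. The longer reduction to the free, primitive case also works but is unnecessary, as you yourself note.
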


\begin{proof}
Dualizing the short exact sequence $0 \to \xi \to H
\xrightarrow{\pi} H/\xi \to 0$, we obtain a long exact sequence,
\begin{equation}
\label{eq:ext}
\xymatrix{0 \ar[r]& (H/\xi)^{\vee}
\ar^(.6){\pi^{\vee}}[r]& H^{\vee}  \ar[r] & \xi^{\vee}
\ar[r]&\Ext(H/\xi, \Z)  \ar[r]& 0}.
\end{equation}
Upon identifying $\Ext(H/\xi, \Z) =\T(H/\xi) = \overline{\xi}/\xi$
and setting $K=\coker(\pi^{\vee})$, the above sequence
splits into two short exact sequences, $0 \to (H/\xi)^{\vee}
\to H^{\vee} \to K \to 0$ and
\begin{equation}
\label{eq:K}
\xymatrix{0 \ar[r] & K \ar[r] &  \xi^{\vee} \ar[r] &
\overline{\xi}/\xi \ar[r] &  0}.
\end{equation}
\par
Now, since $K$ is a subgroup of $\xi^{\vee}$, is must be
torsion free. Thus, $(H/\xi)^{\vee}$ is a
primitive lattice in $H^{\vee}$.
\end{proof}

Given two primitive subgroups $\xi_1, \xi_2 \leq H$, their sum,
$\xi_1 + \xi_2$, may not be a primitive subgroup of $H$.
Likewise, although both $(H/\xi_1)^{\vee}$ and $(H/\xi_2)^{\vee}$
are primitive subgroups of $H^{\vee}$, their sum may not be
primitive.  Nevertheless, the following lemma shows that
the respective determinant groups are the same.

\begin{prop}
\label{prop:xi}
Let $H$ be a finitely generated abelian group, and
let $\xi_1$ and $\xi_2$ be primitive subgroups of $H$,
with $\xi_1 \cap \xi_2$ finite. Set $\xi=(H/\xi_1)^{\vee} +
(H/\xi_2)^{\vee}$, and let $\overline{\xi}$ be the primitive closure
of $\xi$ in $H^{\vee}$.  Then
\[
\overline{\xi}/\xi \cong\overline{\xi_1 + \xi_2}/\xi_1 + \xi_2.
\]
\end{prop}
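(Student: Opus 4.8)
The plan is to reduce to the case of free abelian groups, and then to read off both determinant groups from a single short exact sequence together with its $\Z$-dual.

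First I would reduce to the case $H=\Z^r$. Since $\xi_1$ and $\xi_2$ are primitive they both contain $\Tors(H)$, so the hypothesis that $\xi_1\cap\xi_2$ is finite forces $\xi_1\cap\xi_2=\Tors(H)$; passing to $\bar H:=H/\Tors(H)$, the images $\bar\xi_i$ become primitive lattices in the free group $\bar H$ with $\bar\xi_1\cap\bar\xi_2=0$, while $H/\xi_i=\bar H/\bar\xi_i$, $H/(\xi_1+\xi_2)=\bar H/(\bar\xi_1+\bar\xi_2)$, $H^\vee=\bar H^\vee$, and---because both $\xi_1+\xi_2$ and its primitive closure contain $\Tors(H)$---$\overline{\xi_1+\xi_2}/(\xi_1+\xi_2)=\overline{\bar\xi_1+\bar\xi_2}/(\bar\xi_1+\bar\xi_2)$. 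So I may assume $H$ is free of rank $r$ and $\xi_1\cap\xi_2=0$. Next I would identify the summands of $\xi$ concretely: by \S\ref{subsec:dual}, the monomorphism $\pi_i^\vee$ identifies $(H/\xi_i)^\vee$ with the annihilator $\xi_i^{\perp}=\{f\in H^\vee: f|_{\xi_i}=0\}$, so that $\xi=\xi_1^{\perp}+\xi_2^{\perp}$ as a subgroup of $H^\vee$.

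The heart of the argument is the short exact sequence
\[
0\longrightarrow H\xrightarrow{\ \Delta\ }H/\xi_1\oplus H/\xi_2\longrightarrow H/(\xi_1+\xi_2)\longrightarrow 0 ,
\]
where $\Delta(h)=(h+\xi_1,\,h+\xi_2)$ and the second arrow sends $(a+\xi_1,b+\xi_2)$ to $(a-b)+(\xi_1+\xi_2)$; exactness is a one-line check, since $\ker\Delta=\xi_1\cap\xi_2=0$ and $(a+\xi_1,b+\xi_2)$ lies in the image of $\Delta$ exactly when $a-b\in\xi_1+\xi_2$. Applying $\Hom(-,\Z)$ and using that $H$ and each $H/\xi_i$ are free, while $\Ext^1(H/(\xi_1+\xi_2),\Z)\cong\Tors(H/(\xi_1+\xi_2))=\overline{\xi_1+\xi_2}/(\xi_1+\xi_2)$ as in \S\ref{subsec:dual}, the resulting long exact sequence collapses to
\[
0\to (\xi_1+\xi_2)^{\perp}\to \xi_1^{\perp}\oplus\xi_2^{\perp}\xrightarrow{\ \psi\ }H^\vee\to \overline{\xi_1+\xi_2}/(\xi_1+\xi_2)\to 0 .
\]
Since $\Delta$ is the diagonal of $H$ composed with the two quotient maps, its dual $\psi$ is simply addition, $\psi(g_1,g_2)=g_1+g_2$, under the annihilator identification; hence $\im\psi=\xi_1^{\perp}+\xi_2^{\perp}=\xi$ and $\coker\psi=H^\vee/\xi$. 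Because this cokernel is finite, $\xi$ has full rank in $H^\vee$, so $\overline{\xi}=H^\vee$, and therefore $\overline{\xi}/\xi=H^\vee/\xi=\coker\psi\cong\overline{\xi_1+\xi_2}/(\xi_1+\xi_2)$, which is the assertion.

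I expect the only delicate points to be bookkeeping: verifying that killing $\Tors(H)$ leaves all the relevant groups unchanged (in particular the primitive closure and the determinant group), and correctly matching the $\Z$-dual of $\Delta$ with the addition map $\xi_1^{\perp}\oplus\xi_2^{\perp}\to H^\vee$ under the identifications $(H/\xi_i)^\vee=\xi_i^{\perp}$. Everything else is a routine diagram chase, with no hard construction or estimate involved.
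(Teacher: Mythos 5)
Your proof is correct, and it takes a genuinely different route from the paper's. The paper argues by explicit linear algebra: after the same reduction to $H$ free and $\xi_1\cap\xi_2=0$ (plus a further reduction to $\overline{\xi_1+\xi_2}=H$ that you do not need), it chooses adapted bases, writes the inclusion $\xi_1+\xi_2\hookrightarrow H$ and the projection $H\to H/\xi_1\oplus H/\xi_2$ as block matrices, and matches the elementary divisors of the two determinant groups via a gcd-of-minors computation hinging on the relation $XC=-YD$. You instead dualize the Mayer--Vietoris sequence $0\to H\to H/\xi_1\oplus H/\xi_2\to H/(\xi_1+\xi_2)\to 0$ over $\Z$; since each $H/\xi_i$ is free (primitivity of $\xi_i$) the long exact sequence collapses to a four-term sequence in which one determinant group appears as $\Ext^1(H/(\xi_1+\xi_2),\Z)\cong\Tors(H/(\xi_1+\xi_2))$ and the other as the cokernel of the addition map $\xi_1^{\perp}\oplus\xi_2^{\perp}\to H^{\vee}$, whose image is exactly $\xi$ and whose cokernel is finite, forcing $\overline{\xi}=H^{\vee}$. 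Your argument is shorter, basis-free, and handles the general case without the paper's extra splitting reduction; the identification $\Ext^1(-,\Z)\cong\Tors(-)$ you invoke is the same one the paper already uses in Lemma \ref{lem:dual lattice}, and only an abstract isomorphism is claimed, so its non-canonicity is harmless. What the paper's computation buys in exchange is the explicit matching of elementary divisors ($d_t=x_s$, etc.), which your homological argument does not exhibit, though the proposition does not require it.
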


\begin{proof}
Replacing $H$ by $H/\Tors(H)$ and $\xi_i$ by $\xi_i/\Tors(H)$
if necessary, we may assume that $H$ is free abelian, and $\xi_1$
and $\xi_2$ are primitive lattices with $\xi_1\cap \xi_2=\{0\}$.
Furthermore, choosing a splitting of $H^{\vee} /\overline{\xi}\inj H^{\vee}$
if necessary, we may assume that $\overline{\xi} \cong H^{\vee}$,
or equivalently, $\overline{\xi_1 + \xi_2}=H$. Write
$s=\rank \xi_1$ and $t=\rank \xi_2$. Then $s+t=n$,
where $n=\rank H$.

Choose a basis $\{ e_1, \dots, e_s\}$ for $\xi_1$.  Since
$\xi_1$ is a primitive lattice in $H$, we may extend this basis
to a basis $\{ e_1, \dots , e_s, f_1, \dots , f_t\}$ for $H$.  Picking a
suitable basis for $\xi_2$, we may assume that the inclusion
$\iota\colon \xi_1 + \xi_2 \inj H$ is given by a matrix of the form
\begin{equation}
\label{eq:mat1}
\newcommand*{\temp}{\multicolumn{1}{r|}{}}
\left(\begin{array}{ccc}
I_s &\temp & C\\
\cline{1-3} 0 &\temp & D
\end{array}\right),
\end{equation}
where $I_s$ is the $s \times s$  identity matrix and
$D=\diag(d_1,\dots, d_t)$ is a diagonal matrix with
positive diagonal entries $d_1,\dots, d_t$ such that
$1=d_1=\cdots = d_{m-1}$ and
$1\ne d_t \mid d_{t-1} \mid \cdots \mid d_m$,
for some $1 \leq m \leq t+1$. Then
$\overline{\xi_1 + \xi_2}/\xi_1 + \xi_2 =
\Z/d_m\Z\oplus \cdots \oplus \Z/d_t\Z$.

Notice that the columns of the matrix
$\left(\begin{matrix} C \\ D \\ \end{matrix}\right)$
form a basis for $\xi_2$.  Since $\xi_2$ is a primitive lattice in $H$,
the last $t-m$ columns of $C$ must have a minor of size
$t-m$ equal to $\pm 1$.  Without loss of generality,
we may assume that the corresponding rows are
also the last $t-m$ ones.

The canonical projection $\pi\colon H \surj H/\xi_1 + H/\xi_2$
is given by a matrix of the form
\begin{equation}
\label{eq:mat2}
\newcommand*{\temp}{\multicolumn{1}{r|}{}}
\left(\begin{array}{ccc}
X &\temp & Y\\
\cline{1-3} 0 &\temp & I_t
\end{array}\right).
\end{equation}
Using row and column operations, the matrix $X$ can be brought
to the diagonal form $\diag(x_1,\dots, x_s)$, where $x_i$ are positive
integers with
$1=x_1=\dots = x_{a-1}$ and
$1\ne x_s \mid x_{s-1} \mid \cdots \mid x_a$.
Moreover, the submatrix
of $Y$ involving the last $s-a+1$ rows and columns is invertible.
Taking the dual basis of $H^{\vee}=\Hom(H, \Z)$,
the inclusion $\xi=(H/\xi_1)^{\vee}
+(H/\xi_2)^{\vee} \inj H^{\vee}$ is given
by the matrix
$
\left(\begin{smallmatrix}
X^T & 0\\
Y^T & I_t
\end{smallmatrix}\right)$.
It follows that $\overline{\xi}/\xi =
\Z/x_a\Z\oplus \cdots \oplus \Z/x_s\Z$.

Evidently, the restriction of $\pi\circ \iota$ to $\xi_2$ is the
zero map; hence, $XC =-YD$.
For a fixed integer $k\le s$, set
$\delta_k:=d_t  d_{t-1}\cdots d_{k-s+t}$
and $y_k:= x_s x_{s-1} \cdots x_k$.
Clearly, $\delta_k$ is the gcd of all
minors of size $s-k+1$ of the submatrix in $YD$
involving the last $s-a+1$ rows and columns.
Hence, $\delta_k$ is also the gcd of all
minors of size $s-k+1$ of the corresponding
submatrix of $XC$. Thus, $\delta_k=y_k$.
Hence, $d_t=x_s, \dots, d_{m}=x_{s-t+m}$, and
$1=d_{m-1}=x_{s-t+m-1}$,
which implies $s-t+m=a$. This yields the desired conclusion.
\end{proof}

\section{Categorical reformulation}
\label{sect:cat}

In this section, we reformulate Theorem~\ref{theorem-isomorphic}
using the language of categories. In order to simultaneously
consider the category of all finitely generated abelian groups,
and the lattice structure for all the subgroups of a fixed abelian
group, we need the language of fibered categories
from \cite[\S5.1]{SGA1}, which we briefly recall here.

\subsection{Fibered categories}
\label{subsec:fibcat}

Recall that a poset $(L, \leq)$ can be seen as a small category, with
objects the same as the elements of $L$, and with an arrow
$p \to p'$ in the category $L$ if and only if $p \leq p'$.

Let $\cE$ be a category. We denote by $\Ob\cE$ the objects of $\cE$,
and by $\Mor\cE$ its morphisms. \textit{A category over $\cE$} is a category
$\cF$, together with a functor $\Phi\colon \cF\to\cE$.
For $T\in \Ob\cE$, we denote by $\cF(T)$ the subcategory of
$\cF$ consisting of objects $\xi$ with $\Phi(\xi)=T$,
and morphisms $f$ with $\Phi(f)= \id_T $.
\par
\begin{definition}
\label{def:cartesian}
Let  $\Phi\colon \cF\to\cE$ be a category over $\cE$.
Let $f\colon v\to u$ be a morphism in $\cF$, and set
$(F\colon V\to U)=(\Phi(f\colon v\to u))$. Then $f$ is
said to be a {\em Cartesian morphism}\/ if for any $h\colon v'\to u$
with $\Phi(h\colon v'\to u)=F$, there exists a unique
$h'\colon v'\to v$ in $\Mor\cF(F)$ such that $h=f\circ h'$.
\end{definition}

The above definition is summarized in the following
diagram:
\begin{equation}
\label{eq:cartesian}
\xymatrixrowsep{10pt}
\xymatrix{
v' \ar@{-->}[dd]_{h'} \ar[rdd]^{h} & \\
\\
v \ar[r]^{f} & u\\
V \ar[r]^{F} & U}
\end{equation}

\begin{definition}
\label{def:fibered}
We say that $\Phi\colon \cF\to \cE$ is a {\em fibered category}\/ if for
any morphism $F\colon V\to U$ in $\cE$, and any object $u\in\Ob(\cF(U))$,
there exists a Cartesian morphism $f\colon v \to u$, with $\Phi(f)=F$.
Moreover, the composition of Cartesian morphisms
is required to be a Cartesian morphism.
\end{definition}

We say $\cF$ is \textit{a lattice over $\cE$}\/ (or less succinctly, a
category fibered in lattices over $\cE$) if $\cF$ is a fibered category,
and every $\cF(T)$ is a lattice.

\subsection{A lattice over $\AbRed$}
\label{subsec:lat fibcat}

We now construct a category fibered in lattices over the category
of abelian complex algebraic reductive groups, $\AbRed$.
Let $\SubAbRed$ be the category with objects
\begin{equation}
\label{eq:osar}
\Ob\SubAbRed = \{i\colon W\inj P \mid
\text{ $i$ is a closed immersion of algebraic subgroups}\},
\end{equation}
and morphisms between $i\colon W\to P$ and $i'\colon W'\to P'$
the set of pairs
\begin{equation}
\label{eq:msar}
\set{ (f,g) \mid \text{$f\colon W\to W'$ and $g \colon P\to P'$
such that $i'\circ f=g\circ i$} }.
\end{equation}

Projection to the target,
\begin{equation}
\label{eq:war}
\xymatrix{
W \ar@{^{(}->}[r]^{i} \ar[d]^{f} & P \ar[d]^{g}\\
W' \ar@{^{(}->}[r]^{i'} & P'\\
} \mapsto \xymatrix{
P \ar[d]^{g}\\
P'}
\end{equation}
defines a functor from  $\SubAbRed$ to $\AbRed$.
One can easily check that this functor is a category fibered in
lattices over $\AbRed$, with Cartesian morphisms obtained
by taking preimages of subtori.  More precisely,
suppose $F\colon W \to P$ is a morphism in $\AbRed$,
and $\theta \inj P$ is an object in $\Ob\SubAbRed$;
the corresponding Cartesian morphism is then
\begin{equation}
\label{eq:ftheta}
\xymatrix{
F^{-1}(\theta) \ar@{^{(}->}[d]^{i} \ar[r]^(.55){F} & \theta \ar@{^{(}->}[d]^{i}\\
W \ar[r]^{F} & P}
\end{equation}
\par
A similar construction works for $\AbFgGp$, the category
of finitely generated abelian groups. That is, we can construct a
category $\SubAbGp$ fibered in lattices over
$\AbFgGp$ by taking injective morphisms of
finitely generated abelian groups.

\subsection{An equivalence of fibered categories}
\label{subsec:equiv fibcat}

We now construct an explicit (contravariant) equivalence
between these two fibered categories considered above.
For any inclusion of subgroups $\eta \inj \xi$, let
\begin{equation}
\label{eq:ve2}
V(\eta \inj \xi):=(\Spm (\C[\xi/\eta])\inj\Spm(\C[\xi])).
\end{equation}
This algebraic subgroup is naturally identified with the subgroup of
characters of $\xi/\eta$, i.e., the closed embedding of subtori
$\Hom(\xi/\eta,\C^*)\to\Hom(\xi,\C^*)$.
\par
Finally, for any algebraic subgroups $W\inj P$, let
\begin{equation}
\label{eq:ev2} \epsilon(W\inj P):=(\{ \lambda \in \Hom(P,\C^*) :
\text{$t^{\lambda}-1$ vanishes on $W$}\}\subseteq \Hom(P,\C^*)).
\end{equation}
\par
For a fixed algebraic group $T$ in $\AbRed$,
with character group $H=\check{T}$, the subgroup
$\epsilon(W\inj T)$ of $H$ coincides with the subgroup
$\epsilon(W)$ defined in (\ref{eq:ev}), and the algebraic
subgroup $V(\xi\inj H)$ of $T$ coincides with the algebraic
subgroup $V(\xi)$ defined in (\ref{eq:ve}).
\par
Tracing through the definitions, we obtain the following
result, which reformulates Theorem~\ref{theorem-isomorphic}
in this setting.

\begin{theorem}
\label{thm: equivalence}
The two fibered categories $\SubAbRed$ and $\SubAbGp$ are
equivalent, with (contravariant) equivalences given by the functors
$V$ and $\epsilon$ defined above.
\end{theorem}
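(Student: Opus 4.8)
The plan is to bootstrap from two results already in hand: Proposition~\ref{prop:abred}, which gives the contravariant equivalence $\AbFgGp\simeq\AbRed$ on the base categories, and Theorem~\ref{theorem-isomorphic} together with Lemmas~\ref{lem:eve}, \ref{lem:pv} and \ref{lem:vlattice}, which give, for each fixed object, the order-reversing lattice isomorphism between the subgroups of $H$ and the algebraic subgroups of $\widehat H$. What remains is to globalize these fiberwise statements to the total categories $\SubAbGp$ and $\SubAbRed$ and to check the axioms of a (contravariant) equivalence of fibered categories.

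First I would verify that $V$ and $\epsilon$, as defined in \eqref{eq:ve2} and \eqref{eq:ev2}, really are functors on the total categories, not merely on objects and fibers. Given a morphism $(f,g)\colon(\eta\inj\xi)\to(\eta'\inj\xi')$ in $\SubAbGp$, commutativity of the defining square forces $g(\eta)\subseteq\eta'$, so $g$ descends to $\bar g\colon\xi/\eta\to\xi'/\eta'$; applying the exact functor $\Hom(-,\C^*)$ yields the pair $\big(\widehat{\bar g},\hat g\big)$, which one checks to be a morphism $V(\eta'\inj\xi')\to V(\eta\inj\xi)$ in $\SubAbRed$. Functoriality, and the fact that $V$ covers the base functor $\xi\mapsto\widehat\xi$, are then a routine diagram chase using only exactness of $\Hom(-,\C^*)$ and the identity $\Spm(\C[\check T])=T$; the functor $\epsilon$ is treated symmetrically via $T\mapsto\check T$. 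Next I would record the natural isomorphisms $\epsilon\circ V\simeq\id_{\SubAbGp}$ and $V\circ\epsilon\simeq\id_{\SubAbRed}$: on objects these are precisely the fiberwise identities $\epsilon(V(\xi))=\xi$ (Lemma~\ref{lem:eve}) and $V(\epsilon(P))=P$ (Lemma~\ref{lem:pv}), combined with the unit and counit of the base equivalence of Proposition~\ref{prop:abred}, and naturality with respect to morphisms of the total categories reduces to naturality of double duality, which is already part of Proposition~\ref{prop:abred}.

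The step that needs the most care is the interaction with Cartesian morphisms, and here the contravariance is the point. Both $\SubAbGp$ and $\SubAbRed$ are in fact bifibered over their bases by projection to the target: the Cartesian morphisms are the preimages $\eta'\inj\xi'\mapsto(g^{-1}(\eta')\inj\xi)$ recorded in \S\ref{subsec:lat fibcat}, while the coCartesian morphisms are the images $\eta\inj\xi\mapsto(g(\eta)\inj\xi')$. Since $V$ reverses arrows and covers the arrow-reversing equivalence $\AbFgGp^{\mathrm{op}}\xrightarrow{\ \sim\ }\AbRed$, the assertion to verify is that $V$ interchanges preimage (Cartesian) morphisms on one side with image (coCartesian) morphisms on the other. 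Concretely, for a homomorphism $g\colon\xi\to\xi'$ and a subgroup $\eta\le\xi$ one needs the equality of subgroups of $\widehat{\xi'}$
\[
V\big(g(\eta)\inj\xi'\big)\;=\;\hat g^{-1}\big(V(\eta\inj\xi)\big),
\]
which is immediate on unwinding definitions, since a character $\rho$ of $\xi'$ satisfies $(\rho\circ g)|_{\eta}\equiv 1$ exactly when $\rho|_{g(\eta)}\equiv 1$; the transposed statement $\epsilon\big(\hat g^{-1}(W)\big)=g\big(\epsilon(W)\big)$ on the algebraic side is checked the same way. Granting this, ``Cartesian goes to Cartesian'' holds for $V$ viewed as a functor $\SubAbGp^{\mathrm{op}}\to\SubAbRed$ once $\SubAbGp^{\mathrm{op}}$ is equipped with the fibration induced by the image-opfibration of $\SubAbGp$; combined with the fiberwise equivalences of Theorem~\ref{theorem-isomorphic} and the natural isomorphisms of the previous step, this exhibits $V$ and $\epsilon$ as mutually quasi-inverse equivalences of fibered categories. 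I expect this last bookkeeping --- keeping straight which structure plays the role of ``Cartesian'' on each side once contravariance is accounted for --- to be the only genuine subtlety; everything else follows directly from the results already proved.
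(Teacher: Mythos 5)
Your proposal is correct and follows essentially the same route as the paper, whose entire proof is the one-line assertion that the theorem follows by ``tracing through the definitions'' as a reformulation of Theorem~\ref{theorem-isomorphic} combined with Proposition~\ref{prop:abred}. The one place where you go beyond the paper is the Cartesian/coCartesian bookkeeping for the contravariant functor $V$ --- the paper leaves this entirely implicit, and your identity $V\bigl(g(\eta)\inj\xi'\bigr)=\hat g^{-1}\bigl(V(\eta\inj\xi)\bigr)$ is exactly the verification needed (and is correct: a direct application of $V$ to a preimage-type Cartesian morphism is \emph{not} Cartesian, whereas applying it to an image-type coCartesian morphism is).
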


\section{Intersections of translated algebraic subgroups}
\label{eq:int tt}

In Sections \ref{sect:corresp} and \ref{sect:primitive},
we only considered intersections of
algebraic subgroups.  In this section, we consider the
more general  situation where translated subgroups intersect.

\subsection{Two morphisms}
\label{subsec:sigma gamma}
As usual, let $T$ be a complex abelian reductive group, and let
$H=\check{T}$ be the weight group corresponding to
$T=\widehat{H}$.

Let $\xi_1, \dots, \xi_k$ be subgroups
of $H$.  Set $\xi =\xi_1 + \cdots + \xi_k$, and let
$\sigma\colon \xi_1\times \cdots \times \xi_k \to
\xi$ be the homomorphism given by $(\lambda_1, \dots,
\lambda_k) \mapsto \lambda_1+ \cdots + \lambda_k$.
Consider the induced morphism on character groups,
\begin{equation}
\label{eq:sigmahat}
\xymatrix{\hat\sigma\colon \widehat{\xi} \ar[r] &
\widehat{\xi_1}\times \cdots \times \widehat{\xi_k}}.
\end{equation}
Using Remark \ref{rem:prods}, the next lemma is readily verified.

\begin{lemma}
\label{lem:sigma}
Under the isomorphisms $\widehat{\xi_i}\cong T/V(\xi_i)$ and
$\widehat{\xi}\cong T/V(\xi)=T/(\bigcap_i V(\xi_i))$, the
morphism $\hat\sigma$ gets identified with the morphism
$\delta\colon T/(\bigcap_i  V(\xi_i)) \to T/V(\xi_1) \times
\cdots \times T/V(\xi_k)$ induced by the diagonal map
$\Delta\colon T\to T^k$.
\end{lemma}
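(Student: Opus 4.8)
The plan is to reduce Lemma~\ref{lem:sigma} to a short diagram chase, using exactness of the functor $\Hom(-,\C^*)$ together with the explicit product formula of Remark~\ref{rem:prods}. First I would pin down the identifications occurring in the statement. Write $j_i\colon\xi_i\hookrightarrow H$ and $j\colon\xi\hookrightarrow H$ for the inclusions. Dualizing the short exact sequences $0\to\xi_i\to H\to H/\xi_i\to 0$ and $0\to\xi\to H\to H/\xi\to 0$ and invoking exactness of $\Hom(-,\C^*)$ yields surjections $\hat{j_i}\colon T\twoheadrightarrow\widehat{\xi_i}$ and $\hat{j}\colon T\twoheadrightarrow\widehat{\xi}$ with kernels $\widehat{H/\xi_i}=V(\xi_i)$ and $\widehat{H/\xi}=V(\xi)$; the induced isomorphisms $T/V(\xi_i)\cong\widehat{\xi_i}$ and $T/V(\xi)\cong\widehat{\xi}$ are exactly those of \eqref{eq:xitv}, and by Lemma~\ref{lem:vlattice} we have $V(\xi)=V(\xi_1+\cdots+\xi_k)=\bigcap_i V(\xi_i)$, which is the second form appearing in the statement.

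Next I would make both maps in play concrete. Since each $\xi_i$ is a subgroup of $\xi$, the sum map $\sigma$ factors as $(\lambda_1,\dots,\lambda_k)\mapsto k_1(\lambda_1)+\cdots+k_k(\lambda_k)$, where $k_i\colon\xi_i\hookrightarrow\xi$ denotes the inclusion; applying the evident $k$-fold extension of Remark~\ref{rem:prods} then identifies $\hat\sigma\colon\widehat{\xi}\to\widehat{\xi_1}\times\cdots\times\widehat{\xi_k}$ with the character-restriction map $\chi\mapsto(\chi|_{\xi_1},\dots,\chi|_{\xi_k})$. On the other side, the composite $T\xrightarrow{\Delta}T^k\twoheadrightarrow T/V(\xi_1)\times\cdots\times T/V(\xi_k)$ sends $t$ to $(tV(\xi_1),\dots,tV(\xi_k))$ and has kernel $\bigcap_i V(\xi_i)$; since this equals $V(\xi)$, the composite descends to the morphism $\delta\colon T/V(\xi)\to\prod_i T/V(\xi_i)$ of the statement, and this descent property is precisely what makes $\delta$ well defined.

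Finally I would close by comparing the two. Both $\delta$ and the transported $\hat\sigma$ are morphisms $T/V(\xi)\to\prod_i T/V(\xi_i)$, so it is enough to check that they agree after precomposition with the surjection $q\colon T\twoheadrightarrow T/V(\xi)$. For $\delta$ this composite is $t\mapsto(tV(\xi_1),\dots,tV(\xi_k))$ by the previous paragraph. For $\hat\sigma$, note that $q$ is identified with $\hat{j}\colon t\mapsto t|_{\xi}$; hence $\hat\sigma(\hat{j}(t))=(t|_{\xi_1},\dots,t|_{\xi_k})=(\hat{j_1}(t),\dots,\hat{j_k}(t))$, and transporting the $i$-th coordinate along $\widehat{\xi_i}\cong T/V(\xi_i)$ carries $\hat{j_i}(t)$ to $tV(\xi_i)$ by the very definition of that isomorphism. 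The two composites therefore coincide, and surjectivity of $q$ gives $\hat\sigma=\delta$ under the stated identifications. I do not expect a genuine obstacle here: the only delicate point is bookkeeping — ensuring that the isomorphisms in the statement are literally the ones induced by dualizing the inclusions, and that the passage of the diagonal map to the quotient $T/\bigl(\bigcap_i V(\xi_i)\bigr)$ rests on nothing more than the equality $V(\xi)=\bigcap_i V(\xi_i)$ supplied by Lemma~\ref{lem:vlattice}.
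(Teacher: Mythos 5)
Your proof is correct and follows exactly the route the paper intends: the paper offers no written proof, stating only that the lemma ``is readily verified'' using Remark~\ref{rem:prods}, and your argument is precisely that verification, spelled out via the restriction description of $\hat\sigma$, the identification \eqref{eq:xitv}, and the equality $V(\xi)=\bigcap_i V(\xi_i)$ from Lemma~\ref{lem:vlattice}. No discrepancies.
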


Next, let $\gamma\colon \xi_1\times \cdots \times \xi_k \to
H\times \cdots \times H$ be the product of the inclusion maps
$\gamma_i\colon \xi_i \inj H$, and consider the induced
homomorphism on character groups,
\begin{equation}
\label{eq:etasigma}
\xymatrix{\hat\gamma\colon \widehat{H}\times
\cdots \times \widehat{H} \ar[r]  &
\widehat{\xi_1}\times \cdots \times \widehat{\xi_k}}.
\end{equation}
The next lemma is immediate.

\begin{lemma}
\label{lem:gamma}
Under the isomorphisms $\widehat{\xi_i}\cong T/V(\xi_i)$, the
morphism $\hat\gamma$ gets identified with the projection map
$\pi\colon T^k \to T/V(\xi_1) \times \cdots \times T/V(\xi_k)$.
\end{lemma}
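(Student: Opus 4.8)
The plan is to unwind all the identifications involved and check that the two maps agree on the level of the underlying homomorphisms of abelian reductive groups. First I would recall from \eqref{eq:xitv} that the isomorphism $\widehat{\xi_i}\cong T/V(\xi_i)$ is precisely the one obtained by applying the exact functor $\Hom(-,\C^*)$ to the short exact sequence $0\to\xi_i\to H\to H/\xi_i\to 0$: this gives $0\to\widehat{H/\xi_i}\to\widehat{H}\to\widehat{\xi_i}\to 0$, i.e.\ $0\to V(\xi_i)\to T\to\widehat{\xi_i}\to 0$, and the surjection $T\surj\widehat{\xi_i}$ here is exactly $\widehat{\gamma_i}$, the character-group map induced by the inclusion $\gamma_i\colon\xi_i\inj H$. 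Thus, under the identification $\widehat{\xi_i}\cong T/V(\xi_i)$, the map $\widehat{\gamma_i}\colon\widehat{H}\to\widehat{\xi_i}$ is literally the canonical quotient map $T\surj T/V(\xi_i)$.

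Next I would apply Remark~\ref{rem:prods} (together with the fact, recorded in \S\ref{subsec:abelredgp}, that $\Hom(-,\C^*)$ carries finite direct products to finite direct products), which tells us that the character-group map induced by a product of homomorphisms is the product of the induced maps. Since $\gamma=\gamma_1\times\cdots\times\gamma_k$ as a map $\xi_1\times\cdots\times\xi_k\to H\times\cdots\times H$, we get $\widehat{\gamma}=\widehat{\gamma_1}\times\cdots\times\widehat{\gamma_k}$ as a map $\widehat{H}\times\cdots\times\widehat{H}\to\widehat{\xi_1}\times\cdots\times\widehat{\xi_k}$. Combining this with the previous paragraph, the $i$-th component of $\widehat{\gamma}$ is the quotient map $T\surj T/V(\xi_i)$ applied to the $i$-th coordinate of $T^k$; that is, $\widehat{\gamma}$ is exactly the coordinatewise projection $\pi\colon T^k\to\prod_i T/V(\xi_i)$, which is the claim.

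Since every ingredient here is a direct quotation of results stated earlier in the paper — the identification \eqref{eq:xitv}, exactness of $\Hom(-,\C^*)$, and Remark~\ref{rem:prods} — there is no real obstacle; the only thing requiring a little care is making sure the isomorphism $\widehat{\xi_i}\cong T/V(\xi_i)$ used in the statement is the \emph{same} one produced by the functor (i.e.\ that one does not accidentally compose with an automorphism of $\widehat{\xi_i}$), so I would state that compatibility explicitly at the start. Everything else is a formal bookkeeping of functoriality.
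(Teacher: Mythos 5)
Your proof is correct and is exactly the unwinding the paper has in mind: the paper offers no written argument (it simply declares the lemma ``immediate''), and your chain --- identifying $\widehat{\gamma_i}\colon T\surj\widehat{\xi_i}$ with the quotient $T\surj T/V(\xi_i)$ via \eqref{eq:xitv} and then invoking compatibility with finite products --- is precisely the intended justification.
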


\subsection{Translated algebraic subgroups}
\label{subsec:translated}

Given an algebraic subgroup $P \subseteq T$, and an element
$\eta\in T$, denote by $\eta P$ the translate of $P$ by
$\eta$.  In particular, if $C$ is an algebraic subtorus of
$T$, then $\eta C$ is a translated subtorus. If $\eta$ is a
torsion element of $T$, we denote its order by
$\ord(\eta)$. Finally, if $A$ is a finite group, denote by
$c(A)$ the largest order of any element in $A$.

We are now in a position to state and prove the main result
of this section (Theorem \ref{thm:intro1} from the Introduction). 
As before, let $\xi_1,\dots , \xi_k$ be subgroups of $H=\check{T}$.  
Let $\eta_1, \dots, \eta_k$ be elements in $T$,
and consider the translated subgroups $Q_1, \dots,  Q_k$ of
$T$ defined by
\begin{equation}
\label{eq:qi} Q_i=\eta_i V(\xi_i).
\end{equation}
Clearly, each $Q_i$ is a subvariety of $T$, but, unless
$\eta_i\in V(\xi_i)$, it is not an algebraic subgroup.

\begin{theorem}
\label{translated-prop}
Set $\xi=\xi_1+ \cdots +\xi_k$ and $\eta = (\eta_1, \dots, \eta_k)\in
T^{k}$. Then
\begin{equation}
\label{eq:q1capq2}
Q_1\cap \cdots \cap Q_k =
\begin{cases}
\: \emptyset & \text{if $\hat\gamma(\eta) \notin \im(\hat\sigma)$}, \\[2pt]
\: \rho V(\xi) &\text{otherwise},
 \end{cases}
\end{equation}
where $\rho$ is any element in the intersection $Q=Q_1\cap \cdots \cap Q_k$.
Furthermore, if the intersection is non-empty, then
\begin{enumerate}
\item  \label{r1}
The variety $Q$ decomposes into
irreducible components as
$Q= \bigcup_{\tau \in \widehat{\overline{\xi}/\xi}}
\rho \tau V(\overline{\xi})$, and $\dim(Q)=\dim(T)-\rank(\xi)$.

\item \label{r2}
If $\eta$ has finite order, then $\rho$ can be chosen
to have finite order, too.  Moreover, $\ord(\eta) \mid
\ord(\rho) \mid \ord(\eta) \cdot c(\overline{\xi}/\xi)$.
\end{enumerate}
\end{theorem}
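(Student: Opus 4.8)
The plan is to push the whole problem through the duality of Section~\ref{sect:corresp} together with Lemmas~\ref{lem:sigma}--\ref{lem:gamma}: these convert the geometric intersection into a statement about cosets in character groups, after which the structural claims follow from Lemma~\ref{lem:vxi}, and the finite-order refinement from divisibility of algebraic tori.

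First I would establish the nonemptiness criterion and the equality $Q=\rho V(\xi)$. By~\eqref{eq:xitv} and Lemma~\ref{lem:gamma}, the subgroup $V(\xi_i)\le T$ is the kernel of the map $\hat\gamma_i\colon T\to\widehat{\xi_i}\cong T/V(\xi_i)$ dual to $\xi_i\inj H$, so $t\in Q_i=\eta_i V(\xi_i)$ if and only if $\hat\gamma_i(t)=\hat\gamma_i(\eta_i)$. Combining over $i$ and using the diagonal $\Delta\colon T\to T^k$, we get $t\in Q$ iff $\hat\gamma(\Delta(t))=\hat\gamma(\eta)$; and by Lemmas~\ref{lem:gamma} and~\ref{lem:sigma} the image of $T\xrightarrow{\Delta}T^k\xrightarrow{\hat\gamma}\widehat{\xi_1}\times\cdots\times\widehat{\xi_k}$ is exactly $\im(\hat\sigma)$. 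Hence $Q\neq\emptyset$ precisely when $\hat\gamma(\eta)\in\im(\hat\sigma)$. Assuming this, fix any $\rho\in Q$, so $\hat\gamma(\Delta(\rho))=\hat\gamma(\eta)$; then for $t\in T$ the condition $t\in Q$ becomes $\hat\gamma(\Delta(t\rho^{-1}))=1$, i.e.\ $t\rho^{-1}\in\bigcap_i V(\xi_i)$, which by Lemma~\ref{lem:vlattice} equals $V(\xi)$. Therefore $Q=\rho V(\xi)$.

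Part~\eqref{r1} then follows at once: left translation by $\rho$ is an automorphism of the variety $T$, so it carries the irreducible decomposition $V(\xi)=\bigcup_{\tau\in\widehat{\overline{\xi}/\xi}}\tau V(\overline{\xi})$ of Lemma~\ref{lem:vxi} (with identity component $V(\overline{\xi})$) onto the decomposition $Q=\bigcup_{\tau\in\widehat{\overline{\xi}/\xi}}\rho\tau V(\overline{\xi})$ into irreducible components, while $\dim(Q)=\dim V(\xi)=\dim(T)-\rank(\xi)$ since $\rank(\xi)=\codim V(\xi)$ (Lemma~\ref{lem:vlattice} and the ensuing remark). For part~\eqref{r2}, assume $\eta$ has finite order, so $\hat\gamma(\eta)\in\im(\hat\sigma)$ is torsion. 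Since $\sigma$ is surjective, its dual $\hat\sigma\colon\widehat{\xi}\to\widehat{\xi_1}\times\cdots\times\widehat{\xi_k}$ is injective, so there is a unique $w\in\widehat{\xi}$ with $\hat\sigma(w)=\hat\gamma(\eta)$, and injectivity forces $w$ to be torsion, of the same order as $\hat\gamma(\eta)$. Under $\widehat{\xi}\cong T/V(\xi)$ the coset $Q$ is precisely the fibre of $T\twoheadrightarrow T/V(\xi)$ over $w$, so it remains to lift the torsion element $w$ to a torsion $\rho\in T$ with controlled order; I would do this in two steps along $V(\overline{\xi})\subseteq V(\xi)\subseteq T$. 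In the first step the surjection $T/V(\overline{\xi})\twoheadrightarrow T/V(\xi)$ has kernel $V(\xi)/V(\overline{\xi})\cong\widehat{\overline{\xi}/\xi}$ by Lemma~\ref{lem:vxi}; picking any lift $w'$ of $w$, the element $(w')^{\ord(w)}$ lies in this finite kernel, whose exponent is $c(\overline{\xi}/\xi)$, so $w'$ is torsion with $\ord(w)\mid\ord(w')\mid\ord(w)\cdot c(\overline{\xi}/\xi)$. In the second step the kernel of $T\twoheadrightarrow T/V(\overline{\xi})$ is $V(\overline{\xi})=\widehat{H/\overline{\xi}}$, an algebraic torus and hence a divisible abelian group, so choosing any lift $\rho_0$ of $w'$, an element $v\in V(\overline{\xi})$ with $v^{\ord(w')}=\rho_0^{\ord(w')}$, and $\rho:=\rho_0 v^{-1}$ yields a torsion $\rho\in T$ of the same order as $w'$ still lying over $w$, whence $\rho\in Q$. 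Chaining the divisibilities gives $\ord(w)\mid\ord(\rho)\mid\ord(w)\cdot c(\overline{\xi}/\xi)$, which is the assertion of part~\eqref{r2}.

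Everything up through the dimension count is bookkeeping on top of Section~\ref{sect:corresp}; the genuine content, and the step I expect to be most delicate, is the torsion refinement. The two facts that do the real work there are the injectivity of $\hat\sigma$ (which produces the torsion preimage $w$) and the dichotomy that $V(\overline{\xi})$ is divisible while $V(\xi)/V(\overline{\xi})$ is finite of exponent $c(\overline{\xi}/\xi)$: this is exactly what squeezes $\ord(\rho)$ between $\ord(w)$ and $\ord(w)\cdot c(\overline{\xi}/\xi)$, the only room for inflation coming from the finite quotient. The fiddly points are keeping the two successive lifts compatible (each must remain a lift of $w$), identifying the exponent of $\overline{\xi}/\xi$ with $c(\overline{\xi}/\xi)$, and keeping in mind that the quantity that is actually controlled is $\ord(w)=\ord(\hat\gamma(\eta))$, the order of the translation datum read in $T^k/(V(\xi_1)\times\cdots\times V(\xi_k))$ rather than in $T^k$.
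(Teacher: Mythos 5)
Your argument follows the paper's proof essentially step for step: the same reduction of nonemptiness to $\hat\gamma(\eta)\in\im(\hat\sigma)$ via Lemmas~\ref{lem:gamma} and~\ref{lem:sigma}, the same translation argument giving $Q=\rho V(\xi)$ and part~\eqref{r1} from Lemma~\ref{lem:vxi}, and the same two-stage lift of a torsion element of $T/V(\xi)$ through $T/V(\overline{\xi})$ for part~\eqref{r2}. The only (harmless) divergence is the very last step, where the paper lifts $\tilde\rho$ to $T$ via the splitting $s$ of \eqref{eq:ex4} coming from \eqref{eq:split}, whereas you correct an arbitrary lift using divisibility of the torus $V(\overline{\xi})$; both produce a torsion $\rho$ of the required order, and the caveat you flag about $\ord(\hat\gamma(\eta))$ versus $\ord(\eta)$ is a point the paper's own proof also glosses over (it asserts $\ord(\bar\rho)=\ord(\eta)$ without comment).
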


\begin{proof}
We have:
\begin{align*}
Q_1\cap \cdots \cap Q_k \neq \emptyset
&\same
\text{$\eta_1 a_1= \cdots =\eta_k a_k$, for some $a_i \in V(\xi_i)$}
\\
&\same
\hat\gamma_1(\eta_1)= \cdots = \hat\gamma_k(\eta_k)
& \text{by Lemma \ref{lem:gamma}\phantom{.}}
\\
&\same
\hat\gamma(\eta) \in \im(\hat\sigma)
& \text{by Lemma \ref{lem:sigma}.}
\end{align*}

Now suppose $Q_1\cap \cdots \cap Q_k \neq \emptyset$.
For any $\rho \in Q_1\cap \cdots \cap Q_k$, and any $1\le i\le k$,
there is a $\rho_i\in V(\xi_i)$ such that $\rho=\eta_i \rho_i$;
thus, $\rho^{-1} \eta_i \in V(\xi_i)$.
Therefore,
\begin{align*}
  \rho^{-1}(Q_1\cap \cdots \cap Q_k)
  & =\rho^{-1}(\eta_1 V(\xi_1) \cap \cdots \cap \eta_k V(\xi_k))\\
  &= \rho^{-1}(\eta_1 V(\xi_1)) \cap \cdots \cap \rho^{-1}(\eta_k V(\xi_k)) \\
  & = V(\xi_1) \cap \cdots \cap V(\xi_k)\\
  & = V(\xi_1 + \cdots + \xi_k).
\end{align*}
Hence, $Q_1\cap \cdots \cap Q_k = \rho V(\xi)$.

Finally, suppose $\eta$ has finite order. Let $\bar\rho$
be an element in $T/V(\xi)$ such that
$\hat\sigma(\bar\rho) = \hat\gamma(\eta)$.
Note that $\ord(\bar\rho)=\ord(\eta)$.
Using the exact sequence \eqref{eq:ex2}
and the third isomorphism theorem for
groups, we get a short exact sequence,
\begin{equation}
\label{eq:ex3}
\xymatrix{0 \ar[r] & \widehat{\overline{\xi}/\xi}   \ar[r]
& T/V(\overline{\xi}) \ar^{q}[r]&  T/V(\xi)    \ar[r]& 0}.
\end{equation}
Applying the $\Hom_{\group}(-,\C^*)$ functor to the split
exact \eqref{eq:split}, we get a split exact sequence,
\begin{equation}
\label{eq:ex4}
\xymatrix{0 \ar[r] & V(\overline{\xi})   \ar[r]
&  T \ar[r]&  T/V(\overline{\xi})
\ar@/_15pt/@{-->}_{s}[l]  \ar[r]& 0}.
\end{equation}

Now pick an element $\tilde\rho \in q^{-1}(\bar\rho)$.
We then have $q\big(\tilde\rho^{\,\ord(\eta)}\big)=\bar\rho^{\,\ord(\eta)}=1$,
which implies that $\tilde\rho^{\,\ord(\eta)}\in  \widehat{\overline{\xi}/\xi}$.
Hence, $\tilde\rho$ has finite order in $T/V(\overline{\xi})$, and, moreover,
$\ord(\eta) \mid \ord(\tilde\rho) \mid \ord(\tilde\rho) \cdot c(\overline{\xi}/\xi)$.
Setting $\rho = s(\tilde\rho)$ gives the desired translation factor.
\end{proof}

When $k=2$, the theorem takes a slightly simpler form.

\begin{corollary}
\label{cor:k=2}
Let $\xi_1$ and $\xi_2$ be two subgroups of $H$,
and let $\eta_1$ and $\eta_2$ be two elements in
$T=\widehat{H}$.  Then
\begin{enumerate}
\item \label{tt1}
The variety $Q=\eta_1 V(\xi_1) \cap \eta_2 V(\xi_2)$ is non-empty if and
only if $\eta_1 \eta_2^{-1}$ belongs to the subgroup $V(\xi_1)\cdot V(\xi_2)$.

\item \label{tt2}
If the above condition is satisfied, then $\dim Q=\rank H - \rank (\xi_1+\xi_2)$.
\end{enumerate}
\end{corollary}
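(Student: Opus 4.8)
The plan is to obtain this corollary as the $k=2$ specialization of Theorem~\ref{translated-prop}, the only real work being to repackage the abstract criterion $\hat\gamma(\eta)\in\im(\hat\sigma)$ into the concrete membership statement $\eta_1\eta_2^{-1}\in V(\xi_1)\cdot V(\xi_2)$. First I would invoke Lemmas~\ref{lem:gamma} and~\ref{lem:sigma} to identify, in this case, $\hat\gamma\colon T\times T\to T/V(\xi_1)\times T/V(\xi_2)$ with the product of the two quotient projections, and $\hat\sigma$ with the ``diagonal'' map $\delta\colon T/(V(\xi_1)\cap V(\xi_2))\to T/V(\xi_1)\times T/V(\xi_2)$ induced by $\Delta\colon T\to T\times T$. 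Hence $\hat\gamma(\eta_1,\eta_2)\in\im(\hat\sigma)$ holds precisely when there is an element $t\in T$ with $tV(\xi_1)=\eta_1 V(\xi_1)$ and $tV(\xi_2)=\eta_2 V(\xi_2)$.

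Next I would check that this last condition is equivalent to $\eta_1\eta_2^{-1}\in V(\xi_1)\cdot V(\xi_2)$. If such a $t$ exists, write $\eta_i=t v_i$ with $v_i\in V(\xi_i)$, so that $\eta_1\eta_2^{-1}=v_1 v_2^{-1}\in V(\xi_1)\cdot V(\xi_2)$, using that $V(\xi_2)$ is a subgroup. Conversely, if $\eta_1\eta_2^{-1}=w_1 w_2$ with $w_i\in V(\xi_i)$, set $t=\eta_1 w_1^{-1}$; then $t\equiv\eta_1\pmod{V(\xi_1)}$, and $t=\eta_2 w_2\equiv\eta_2\pmod{V(\xi_2)}$, as required. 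By Theorem~\ref{translated-prop} this proves part~\eqref{tt1}. I would also record, via Lemma~\ref{lem:vlattice}, the alternative phrasing $V(\xi_1)\cdot V(\xi_2)=V(\xi_1\cap\xi_2)$, so that the nonemptiness criterion may equally be stated as $\eta_1\eta_2^{-1}\in V(\xi_1\cap\xi_2)$.

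For part~\eqref{tt2}, once the intersection is known to be nonempty, Theorem~\ref{translated-prop}\eqref{r1} gives $\dim Q=\dim(T)-\rank(\xi_1+\xi_2)$, and $\dim T=\rank H$ since $T=\widehat{H}$, which is exactly the claimed formula. I do not expect a genuine obstacle here: all of the content lies in carefully tracking the identifications set up in Section~\ref{eq:int tt}, and the only mild subtlety---that $\im(\hat\sigma)$ collapses to a single diagonal copy of $T$ modulo $V(\xi_1)\cap V(\xi_2)$ only because there are exactly two factors---is precisely what makes the $k=2$ statement cleaner than the general one.
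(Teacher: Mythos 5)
Your proposal is correct and follows essentially the route the paper intends: the corollary is stated there without proof as the $k=2$ case of Theorem~\ref{translated-prop}, and your translation of $\hat\gamma(\eta)\in\im(\hat\sigma)$ into $\eta_1\eta_2^{-1}\in V(\xi_1)\cdot V(\xi_2)$ via Lemmas~\ref{lem:sigma} and~\ref{lem:gamma} (equivalently, directly from the first chain of equivalences in the theorem's proof, since $\eta_1 a_1=\eta_2 a_2$ means $\eta_1\eta_2^{-1}=a_1^{-1}a_2\in V(\xi_1)\cdot V(\xi_2)$) is exactly the needed bookkeeping. The dimension count in part~\eqref{tt2} is likewise just Theorem~\ref{translated-prop}\eqref{r1} with $\dim T=\rank H$.
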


In the special case when $H= \Z^r$ and $T=(\C^{*})^{r}$,
Theorem \ref{translated-prop} allows us to
recover Proposition 3.6 from \cite{Hi}.

\begin{corollary}[Hironaka \cite{Hi}]
\label{thm:hironaka}
Let $\xi_1,\dots , \xi_k$ be subgroups of $\Z^{r}$, 
let $\eta = (\eta_1, \dots, \eta_k)$ be an element in 
$(\C^*)^{rk}$, and set $Q_i=\eta_i V(\xi_i)$.   Then
\begin{equation}
\label{eq:q1qkhir}
Q_1\cap \cdots \cap Q_k \ne\emptyset  \same
\hat\gamma(\eta) \in \im(\hat\sigma).
\end{equation}
Moreover, for any connected component
$Q$ of $Q_1\cap \cdots \cap Q_k$, we have:
\begin{enumerate}
\item\label{h1}
$Q=\rho V(\overline{\xi})$, for some $\rho\in (\C^*)^r$.
\item\label{h2}
$\dim(Q) = r-\rank (\overline{\xi})$.
  \item\label{h3}
If $\eta$ has finite order, then $\ord(\eta) \mid
\ord(\rho) \mid \ord(\eta) \cdot c(\overline{\xi}/\xi)$.
\end{enumerate}
\end{corollary}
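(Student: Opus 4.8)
The plan is to obtain everything as a specialization of Theorem~\ref{translated-prop} to the free abelian case $H=\Z^r$, $T=(\C^*)^r$, where the extra input is that the ambient group is connected. Applying the first assertion of that theorem verbatim yields the non-emptiness criterion \eqref{eq:q1qkhir}, and, when $Q_1\cap\cdots\cap Q_k\ne\emptyset$, the equality $Q_1\cap\cdots\cap Q_k=\rho\,V(\xi)$ with $\xi=\xi_1+\cdots+\xi_k$, for any $\rho$ in the intersection. Theorem~\ref{translated-prop}\eqref{r2} moreover lets us take $\rho$ of finite order once $\eta$ has finite order, with $\ord(\eta)\mid\ord(\rho)\mid\ord(\eta)\cdot c(\overline{\xi}/\xi)$. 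So the substance of the corollary is to refine the single product $\rho\,V(\xi)$ into the list of its connected components.

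For this I would use that $H=\Z^r$ forces $\Z^r/\overline{\xi}$ to be torsion-free, so that $V(\overline{\xi})$ is a connected subtorus of $(\C^*)^r$ (Corollary~\ref{cor:primitive}) and, by Lemma~\ref{lem:vxi}, $V(\xi)=\bigcup_{\tau\in\widehat{\overline{\xi}/\xi}}\tau\,V(\overline{\xi})$ is the decomposition of $V(\xi)$ into irreducible components, with identity component $V(\overline{\xi})$. Because the splitting of \eqref{eq:ex2} identifies $V(\xi)$ with $\widehat{\overline{\xi}/\xi}\times V(\overline{\xi})$, the cosets $\tau\,V(\overline{\xi})$ are pairwise disjoint, so this is also the decomposition of $V(\xi)$ into connected components; translating by $\rho$, the connected components of $Q_1\cap\cdots\cap Q_k$ are exactly the cosets $\rho\tau\,V(\overline{\xi})$. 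Hence an arbitrary connected component $Q$ has the form $Q=\rho'\,V(\overline{\xi})$ with $\rho'=\rho\tau\in(\C^*)^r$ --- in fact $\rho'\in Q$ --- which is \eqref{h1}; and \eqref{h2} is then immediate, since $\dim Q=\dim V(\overline{\xi})=\corank(\overline{\xi})=r-\rank(\overline{\xi})$ by the rank-reversing property of the correspondence noted after Lemma~\ref{lem:vlattice}.

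Finally, for \eqref{h3} I would assume $\eta$ has finite order and take as translation factor of the component $Q=\rho\tau\,V(\overline{\xi})$ the element $\rho'=\rho\tau$, where $\rho$ is the finite-order global factor above. Since $\tau\in\widehat{\overline{\xi}/\xi}$ is torsion with $\ord(\tau)\mid c(\overline{\xi}/\xi)$ and $(\C^*)^r$ is abelian, $\rho'$ is again torsion and $\ord(\rho')\mid\lcm\bigl(\ord(\rho),\ord(\tau)\bigr)\mid\ord(\eta)\cdot c(\overline{\xi}/\xi)$. For the lower bound $\ord(\eta)\mid\ord(\rho')$ I would repeat the mechanism from the proof of Theorem~\ref{translated-prop}\eqref{r2}: from $\rho'\in Q_i=\eta_iV(\xi_i)$ the images of $\rho'$ and of $\eta_i$ in $T/V(\xi_i)$ agree, so the order of $\rho'$ is seen faithfully in $\prod_iT/V(\xi_i)$, where it is governed by $\hat\gamma(\eta)$ and the injectivity of $\hat\sigma$ (which holds because $\sigma$ is surjective). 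I expect this last point --- keeping the lower order bound intact when one descends from the translation factor of the whole intersection to that of a chosen component --- to be the only step needing genuine care; everything else is a direct transcription of the material already established.
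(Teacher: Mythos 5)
Your proposal is correct and follows the same route as the paper, which proves this corollary simply by specializing Theorem~\ref{translated-prop} to $H=\Z^r$, $T=(\C^*)^r$ and reading off the component decomposition $Q=\bigcup_{\tau\in\widehat{\overline{\xi}/\xi}}\rho\tau V(\overline{\xi})$ already recorded in part \eqref{r1} of that theorem (via Lemma~\ref{lem:vxi}). Your extra care in part \eqref{h3} --- checking that passing from the global factor $\rho$ to the component factor $\rho\tau$ preserves both divisibility bounds, with the lower bound re-derived from $\hat\gamma(\eta)=\pi(\rho\tau,\dots,\rho\tau)$ --- is exactly the detail the paper leaves implicit, and it is handled correctly.
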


\subsection{Some consequences}
\label{subsec:apps}

We now derive a number of corollaries to Theorem \ref{translated-prop}.
Fix a complex abelian reductive group $T$.
To start with, we give a general description of the
intersection of two arbitrary unions of translated subgroups.

\begin{corollary}
\label{cor:arbint}
Let $W=\bigcup_i \eta_i V(\xi_i)$ and $W'=\bigcup_j \eta_j' V(\xi_j')$
be two unions of translated subgroups of $T$.  Then
\begin{equation}
\label{eq:cap}
W \cap W' = \bigcup_{i, j} \eta_i V(\xi_i) \cap \eta_j' V(\xi_j'),
\end{equation}
where $\eta_i V(\xi_i) \cap \eta_j' V(\xi_j')$ is either empty
(which this occurs precisely when
$\hat\gamma_{i, j}(\eta_i, \eta_j)$ does not belong to $\im(\hat\sigma_{i, j})$),
or equals $\eta_{i, j} V(\xi_i+\xi_j')$, for some $\eta_{i, j} \in T$.
\end{corollary}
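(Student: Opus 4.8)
The plan is to deduce Corollary~\ref{cor:arbint} directly from Theorem~\ref{translated-prop} by distributing intersection over the two unions. First I would observe that intersection distributes over union, so
\[
W \cap W' = \Bigl(\bigcup_i \eta_i V(\xi_i)\Bigr) \cap \Bigl(\bigcup_j \eta_j' V(\xi_j')\Bigr) = \bigcup_{i,j} \bigl(\eta_i V(\xi_i) \cap \eta_j' V(\xi_j')\bigr),
\]
which is exactly \eqref{eq:cap}; this step is purely set-theoretic and needs no hypotheses.

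Next I would analyze each term $\eta_i V(\xi_i) \cap \eta_j' V(\xi_j')$ individually. This is the intersection of two translated subgroups, so it is precisely the $k=2$ case of Theorem~\ref{translated-prop} applied with the pair of subgroups $(\xi_i, \xi_j')$ and the pair of translation elements $(\eta_i, \eta_j')$. I would introduce the notation $\sigma_{i,j}$ and $\gamma_{i,j}$ for the corresponding sum and inclusion homomorphisms $\xi_i \times \xi_j' \to \xi_i + \xi_j'$ and $\xi_i \times \xi_j' \to H \times H$ built as in \S\ref{subsec:sigma gamma}, and $\hat\sigma_{i,j}$, $\hat\gamma_{i,j}$ for the induced maps on character groups. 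Theorem~\ref{translated-prop} then says verbatim that this intersection is empty precisely when $\hat\gamma_{i,j}(\eta_i, \eta_j') \notin \im(\hat\sigma_{i,j})$, and otherwise equals $\rho\, V(\xi_i + \xi_j')$ for any $\rho$ in the intersection; setting $\eta_{i,j} = \rho$ gives the stated form. (Note that in the displayed statement the argument is written $(\eta_i,\eta_j)$ but should be read as $(\eta_i,\eta_j')$, matching the translation factors of the two subgroups being intersected.)

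Since each building block is handled by Theorem~\ref{translated-prop}, there is essentially no obstacle here; the only point requiring mild care is bookkeeping, namely making sure the morphisms $\sigma_{i,j}, \gamma_{i,j}$ are indexed correctly so that $\hat\sigma_{i,j}, \hat\gamma_{i,j}$ refer to the right pair of lattices, and that the chosen $\eta_{i,j}$ need only be \emph{some} element of the (possibly empty) intersection rather than a canonical one. If desired, one could also remark — again straight from Theorem~\ref{translated-prop}\eqref{r2} — that when all the $\eta_i$ and $\eta_j'$ are torsion, each nonempty $\eta_{i,j}$ may be taken torsion as well, with $\ord(\eta_{i,j})$ dividing $\lcm\bigl(\ord(\eta_i),\ord(\eta_j')\bigr)\cdot c\bigl(\overline{\xi_i+\xi_j'}/(\xi_i+\xi_j')\bigr)$; this recovers the finiteness claim accompanying \eqref{eq:int} in the Introduction.
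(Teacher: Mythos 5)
Your proposal is correct and is exactly the argument the paper intends: the corollary is stated without proof as an immediate consequence of Theorem~\ref{translated-prop}, obtained by distributing intersection over union and applying the $k=2$ case to each pair $(\xi_i,\xi_j')$, $(\eta_i,\eta_j')$. Your remark that the displayed $\hat\gamma_{i,j}(\eta_i,\eta_j)$ should be read as $\hat\gamma_{i,j}(\eta_i,\eta_j')$, and your observation about torsion translation factors, both match the paper's surrounding discussion (cf.\ \eqref{eq:int} and Corollary~\ref{cor:finite int tors}).
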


\begin{corollary}
\label{cor:finite int}
With notation as above, $W \cap W'$ is finite if and only if
$W \cap W' = \emptyset$ or
$\rank (\xi_i + \xi_j')=\dim(T)$, for all $i, j$.
\end{corollary}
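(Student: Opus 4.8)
The plan is to reduce the statement to Corollary \ref{cor:arbint} together with the dimension count in Theorem \ref{translated-prop}\eqref{r1}. First I would use the decomposition \eqref{eq:cap}: the set $W\cap W'$ is the finite union of the pieces $\eta_i V(\xi_i)\cap \eta_j' V(\xi_j')$. Since a finite union of algebraic sets is finite if and only if each member of the union is finite (a set being ``finite'' here means finite as a set of points), it suffices to characterize when each individual intersection $\eta_i V(\xi_i)\cap \eta_j' V(\xi_j')$ is finite. By Corollary \ref{cor:arbint}, such a piece is either empty — hence finite — or equal to a translate $\eta_{i,j} V(\xi_i+\xi_j')$ of an algebraic subgroup. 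In the latter case, $\dim \bigl(\eta_{i,j} V(\xi_i+\xi_j')\bigr)=\dim V(\xi_i+\xi_j')$, which by the rank-reversing property recorded after Lemma \ref{lem:vlattice} equals $\dim(T)-\rank(\xi_i+\xi_j')$ (equivalently, one may cite $\dim Q=\dim(T)-\rank(\xi)$ from Theorem \ref{translated-prop}\eqref{r1}).

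Next I would observe that an algebraic subset of $T$ (or a translate of an algebraic subgroup) is finite precisely when it has dimension $0$, since $T$ is a variety of finite type over $\C$. Therefore the nonempty piece $\eta_i V(\xi_i)\cap \eta_j' V(\xi_j')$ is finite if and only if $\dim(T)-\rank(\xi_i+\xi_j')=0$, i.e., $\rank(\xi_i+\xi_j')=\dim(T)$. Combining the two cases: the piece indexed by $(i,j)$ is finite if and only if it is empty or $\rank(\xi_i+\xi_j')=\dim(T)$. Taking the conjunction over all pairs $(i,j)$ gives that $W\cap W'$ is finite if and only if, for every $i,j$, either $\eta_i V(\xi_i)\cap \eta_j' V(\xi_j')=\emptyset$ or $\rank(\xi_i+\xi_j')=\dim(T)$.

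Finally I would reconcile this with the stated form of the corollary, namely ``$W\cap W'=\emptyset$ or $\rank(\xi_i+\xi_j')=\dim(T)$ for all $i,j$.'' If $W\cap W'=\emptyset$, every piece is empty and the condition holds trivially. Conversely, if $W\cap W'\ne\emptyset$ but is finite, I would argue that one may discard the empty pieces without loss: for each pair $(i,j)$ with $\eta_i V(\xi_i)\cap \eta_j' V(\xi_j')=\emptyset$, enlarging $\xi_j'$ to $\xi_i+\xi_j'$ does not change the value $\rank(\xi_i+\xi_j')$, so in fact the cleanest route is to phrase the equivalence exactly as derived above and note it agrees with the statement once the trivial case is separated. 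The only mild subtlety — and the closest thing to an obstacle — is the bookkeeping of the empty pieces: one must be careful that ``$\rank(\xi_i+\xi_j')=\dim(T)$ for all $i,j$'' in the statement is meant to be required only for those pairs contributing a nonempty component, which is exactly what the case analysis above delivers; no serious difficulty arises, as everything follows formally from Corollary \ref{cor:arbint} and the dimension formula.
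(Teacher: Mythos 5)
Your proof is correct and follows exactly the route the paper intends: the paper states this corollary without an explicit proof, as an immediate consequence of the decomposition in Corollary \ref{cor:arbint} and the dimension formula $\dim Q=\dim(T)-\rank(\xi)$ from Theorem \ref{translated-prop}, which is precisely your argument. Your closing remark about the empty pieces is apt --- read literally, the condition ``$\rank(\xi_i+\xi_j')=\dim(T)$ for all $i,j$'' must be restricted to those pairs whose intersection $\eta_i V(\xi_i)\cap\eta_j'V(\xi_j')$ is nonempty (otherwise a disjoint pair of positive-dimensional translates would give a counterexample to the ``only if'' direction), and your case analysis delivers exactly that corrected form.
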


\begin{corollary}
\label{cor:finite int tors}
Let $W$ and $W'$
be two unions of (torsion-\nobreak) translated subgroups of $T$.  Then
$W\cap W'$ is again a union of (torsion-) translated subgroups of $T$.
\end{corollary}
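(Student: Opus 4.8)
The plan is to reduce Corollary~\ref{cor:finite int tors} to the $k=2$ case and then apply Corollary~\ref{cor:arbint} together with part~\eqref{r2} of Theorem~\ref{translated-prop}. First I would note that a \emph{union} of torsion-translated subgroups $W=\bigcup_i \eta_i V(\xi_i)$ (with each $\eta_i$ of finite order) and $W'=\bigcup_j \eta_j' V(\xi_j')$ (with each $\eta_j'$ of finite order) has intersection given termwise by \eqref{eq:cap}: $W\cap W'=\bigcup_{i,j}\bigl(\eta_i V(\xi_i)\cap \eta_j' V(\xi_j')\bigr)$. So it suffices to show that each nonempty piece $\eta_i V(\xi_i)\cap \eta_j' V(\xi_j')$ is itself a torsion-translated subgroup.

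Next I would invoke Corollary~\ref{cor:arbint}: when the piece is nonempty, it equals $\eta_{i,j}V(\xi_i+\xi_j')$ for some $\eta_{i,j}\in T$. The only thing left is to check that $\eta_{i,j}$ can be taken to have finite order. For this I would apply Theorem~\ref{translated-prop} with $k=2$, the subgroups $\xi_1=\xi_i$, $\xi_2=\xi_j'$, and the translation factor $\eta=(\eta_i,\eta_j')$, which has finite order since both coordinates do. Part~\eqref{r2} of that theorem then says precisely that the translation factor $\rho$ of the intersection can be chosen to have finite order, with $\ord(\eta)\mid\ord(\rho)\mid\ord(\eta)\cdot c(\overline{\xi_i+\xi_j'}/(\xi_i+\xi_j'))$; in particular $\eta_{i,j}$ may be chosen torsion. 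Hence every nonempty piece is a torsion-translated subgroup, and $W\cap W'$ is a finite union of such, as claimed.

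I do not expect a genuine obstacle here, since the heavy lifting is already done in Theorem~\ref{translated-prop}; the only mild point to be careful about is that the \emph{empty} pieces simply drop out of the union (and an empty union is vacuously a union of torsion-translated subgroups, or one can observe that $W\cap W'=\emptyset$ is itself trivially of the desired form), so the statement holds whether or not $W\cap W'$ is empty. If one wants a self-contained write-up, the whole argument is essentially one paragraph: apply \eqref{eq:cap}, discard the empty intersections, and quote Theorem~\ref{translated-prop}\eqref{r2} for each surviving term.
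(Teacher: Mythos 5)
Your argument is correct and matches the paper's intended proof: the corollary is stated there without a written-out proof, as an immediate consequence of Corollary~\ref{cor:arbint} (equation~\eqref{eq:cap}) together with Theorem~\ref{translated-prop}\eqref{r2}, which is exactly the route you take. Nothing is missing.
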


The next two corollaries give a comparison between
the intersections of various translates of two fixed
algebraic subgroups of $T$.
Both of these results will be useful in another paper \cite{SYZ}.

\begin{corollary}
\label{dimension equal}
Let $T_1$ and $T_2$ be two algebraic subgroups in
$T$. Suppose $\alpha, \beta, \eta$ are elements in
$T$, such that $\alpha T_1 \cap \eta T_2 \neq \emptyset$ and
$\beta T_1\cap \eta T_2 \neq \emptyset$. Then
\begin{equation}
\label{eq:dim}
\dim\, (\alpha T_1 \cap \eta T_2)=\dim\, (\beta T_1 \cap \eta T_2).
\end{equation}
\end{corollary}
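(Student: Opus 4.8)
The plan is to reduce \eqref{eq:dim} to the dimension formula in Corollary~\ref{cor:k=2}\eqref{tt2}. By Lemma~\ref{lem:pv}, write $T_1=V(\xi_1)$ and $T_2=V(\xi_2)$ for suitable subgroups $\xi_1,\xi_2\le H$. The hypothesis $\alpha T_1\cap\eta T_2\neq\emptyset$, together with Theorem~\ref{translated-prop} (applied with $k=2$, $\eta_1=\alpha$, $\eta_2=\eta$), then gives $\dim(\alpha T_1\cap\eta T_2)=\dim(T)-\rank(\xi_1+\xi_2)$; similarly, the hypothesis $\beta T_1\cap\eta T_2\neq\emptyset$ gives $\dim(\beta T_1\cap\eta T_2)=\dim(T)-\rank(\xi_1+\xi_2)$. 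Since the right-hand sides are identical, \eqref{eq:dim} follows at once.

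The key point is simply that the dimension of a non-empty intersection of two translated subgroups $\eta_1 V(\xi_1)\cap\eta_2 V(\xi_2)$ depends only on the subgroups $\xi_1$ and $\xi_2$, \emph{not} on the translation factors $\eta_1,\eta_2$ — this is exactly the content of part \eqref{r1} of Theorem~\ref{translated-prop}, which asserts $\dim(Q)=\dim(T)-\rank(\xi)$ with $\xi=\xi_1+\xi_2$. So the only thing that matters in comparing $\alpha T_1\cap\eta T_2$ with $\beta T_1\cap\eta T_2$ is that both intersections be non-empty, which is precisely what is assumed.

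I do not expect any serious obstacle here: the statement is essentially a corollary in the literal sense, obtained by reading off the dimension formula twice. The only thing to be careful about is the logical structure — one must invoke the non-emptiness hypotheses to be entitled to apply the dimension formula, since Theorem~\ref{translated-prop} only yields $\dim(Q)=\dim(T)-\rank(\xi)$ when $Q\neq\emptyset$. Once that is in place, the proof is a one-line comparison, so I would present it as such.

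\begin{proof}
By Lemma \ref{lem:pv}, we may write $T_1=V(\xi_1)$ and $T_2=V(\xi_2)$
for some subgroups $\xi_1, \xi_2 \le H=\check{T}$.
Since $\alpha T_1\cap\eta T_2\neq\emptyset$, part \eqref{r1} of
Theorem \ref{translated-prop} (with $k=2$) gives
$\dim(\alpha T_1\cap\eta T_2)=\dim(T)-\rank(\xi_1+\xi_2)$.
Likewise, since $\beta T_1\cap\eta T_2\neq\emptyset$, the same
result gives $\dim(\beta T_1\cap\eta T_2)=\dim(T)-\rank(\xi_1+\xi_2)$.
Comparing the two expressions yields \eqref{eq:dim}.
\end{proof}
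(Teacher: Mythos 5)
Your proof is correct and follows essentially the same route as the paper: both arguments set $\xi=\xi_1+\xi_2$ (the paper writes $\xi=\epsilon(T_1)+\epsilon(T_2)$, which is the same data via the correspondence of Theorem~\ref{theorem-isomorphic}) and read off from Theorem~\ref{translated-prop} that each non-empty intersection has dimension $\dim(T)-\rank(\xi)$, i.e.\ the corank of $\xi$. Your write-up is just slightly more explicit about invoking Lemma~\ref{lem:pv} and the non-emptiness hypotheses, which is fine.
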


\begin{proof}
Set $\xi = \epsilon(T_1)  + \epsilon(T_2)$. From
Theorem \ref{translated-prop}, we find that both $\alpha T_1
\cap \eta T_2$ and $\beta T_1 \cap \eta T_2$ have dimension
equal to the corank of $\xi$.  This ends the proof.
\end{proof}

\begin{corollary}
\label{intersection empty}
Let $T_1$ and $T_2$ be two algebraic
subgroups in $T$. Suppose $\alpha_1$ and $\alpha_2$ are torsion
elements in $T$, of coprime order. Then
\begin{equation}
\label{eq:alpha}
T_1 \cap \alpha_2 T_2 = \emptyset \implies \alpha_1 T_1\cap \alpha_2
T_2 = \emptyset.
\end{equation}
\end{corollary}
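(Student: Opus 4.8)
The plan is to reduce everything to the criterion of Corollary \ref{cor:k=2}\eqref{tt1}, which says that a translated intersection $\alpha T_1 \cap \beta T_2$ is non-empty if and only if $\alpha\beta^{-1} \in T_1\cdot T_2$. First I would observe that the hypothesis $T_1\cap\alpha_2 T_2=\emptyset$ means precisely that $\alpha_2 \notin T_1\cdot T_2$, and the desired conclusion $\alpha_1 T_1\cap\alpha_2 T_2=\emptyset$ means precisely that $\alpha_1\alpha_2^{-1}\notin T_1\cdot T_2$. So the statement to prove is the contrapositive implication: if $\alpha_1\alpha_2^{-1}\in T_1\cdot T_2$, then $\alpha_2\in T_1\cdot T_2$. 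Here $T_1\cdot T_2$ is an algebraic subgroup of $T$ (the join in $\cL_{\alg}(T)$), so the content is purely group-theoretic once we know how torsion interacts with membership in this subgroup.

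Next I would use the coprimality. Write $m=\ord(\alpha_1)$ and $n=\ord(\alpha_2)$ with $\gcd(m,n)=1$, and pick integers $a,b$ with $am+bn=1$. Suppose $\alpha_1\alpha_2^{-1}\in T_1\cdot T_2$. Raising to the $m$-th power kills $\alpha_1$, so $\alpha_2^{-m}\in T_1\cdot T_2$, hence $\alpha_2^{m}\in T_1\cdot T_2$. On the other hand $\alpha_2^{n}=1\in T_1\cdot T_2$. Since $T_1\cdot T_2$ is a subgroup and $\gcd(m,n)=1$, we get $\alpha_2 = \alpha_2^{am+bn} = (\alpha_2^{m})^{a}(\alpha_2^{n})^{b} \in T_1\cdot T_2$, which is exactly what we needed.

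Assembling: the hypothesis $T_1\cap\alpha_2 T_2=\emptyset$ plus the just-derived fact that $\alpha_1\alpha_2^{-1}\in T_1\cdot T_2$ would force $\alpha_2\in T_1\cdot T_2$, contradicting Corollary \ref{cor:k=2}\eqref{tt1} applied to $T_1\cap\alpha_2 T_2$. Therefore $\alpha_1\alpha_2^{-1}\notin T_1\cdot T_2$, i.e.\ $\alpha_1 T_1\cap\alpha_2 T_2=\emptyset$, again by Corollary \ref{cor:k=2}\eqref{tt1}. I do not expect any serious obstacle here: the only mild subtlety is making sure $T_1\cdot T_2$ really is a subgroup in which the coprime-exponent argument is legitimate, which is guaranteed by \S\ref{subsec:lattice T} (the join $P_1\vee P_2 = P_1\cdot P_2$ is an algebraic subgroup). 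An alternative, essentially equivalent, route would be to pass through Pontrjagin duality and the maps $\hat\sigma,\hat\gamma$ of Theorem \ref{translated-prop}, but the direct argument via Corollary \ref{cor:k=2} is cleaner.
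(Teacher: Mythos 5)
Your proof is correct and uses essentially the same argument as the paper: the only difference is that you phrase the non-emptiness criterion via Corollary \ref{cor:k=2}\eqref{tt1} (membership of $\alpha_1\alpha_2^{-1}$ in the subgroup $T_1\cdot T_2$), whereas the paper works with the equivalent condition $\hat\gamma(\alpha_1,\alpha_2)\in\im(\hat\sigma)$ from Theorem \ref{translated-prop}, but the key step --- the B\'ezout/coprime-order manipulation that recovers $\alpha_2$ from $\alpha_1\alpha_2^{-1}$ inside the relevant subgroup --- is identical.
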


\begin{proof}
Set $H=\check{T}$, $\xi_i=\epsilon( T_i)$, and
$\xi=\xi_1+\xi_2$. By Theorem \ref{translated-prop}, the
condition that $T_1 \cap \alpha_2 T_2 = \emptyset$ implies
$\hat\gamma(1, \alpha_2) \notin \im(\hat\sigma)$, where
$\sigma\colon \xi_1\times \xi_2 \surj \xi$ is the sum
homomorphism, and $\gamma\colon \xi_1\times \xi_2\inj
H\times H$ is the inclusion map.

Suppose $\alpha_1 T_1 \cap \alpha_2 T_2 \neq \emptyset$. Then, from
Theorem \ref{translated-prop} again, we know that $\hat\gamma(\alpha_1,
\alpha_2) \in \im(\hat\sigma)$; thus,
 $(\hat\gamma(\alpha_1, \alpha_2))^n \in \im(\hat\sigma)$, for any
 integer $n$.
From our hypothesis, the orders of $\alpha_1$ and $\alpha_2$ are
coprime;  thus, there exist integers $p$ and $q$ such that
$p\ord(\alpha_1)+q\ord(\alpha_2)=1$. Hence,
\[
\hat\gamma(1, \alpha_2)= \hat\gamma(\alpha_1^{p\ord(\alpha_1)},
\alpha_2^{1-q\ord(\alpha_2)})= (\hat\gamma(\alpha_1,
\alpha_2))^{p\ord(\alpha_1)}\in \im(\hat\sigma),
\]
a contradiction.
\end{proof}

\subsection{Abelian covers}
\label{subsec:abelcov}
In \cite{SYZ}, we use Corollaries~\ref{dimension equal}
and \ref{intersection empty} to study the homological
finiteness properties of abelian covers.  Let us briefly
mention one of the results we obtain as a consequence.

Let $X$ be a connected CW-complex with finite $1$-skeleton.
Let $H=H_1(X,\Z)$ the first homology group.
Since the space $X$ has only finitely many $1$-cells,
$H$ is a finitely generated abelian group. The characteristic varieties
of $X$ are certain Zariski closed subsets $\VV^i(X)$ inside the character
torus $\widehat{H}=\Hom(H,\C^*)$. The question we study in \cite{SYZ}
is the following:
Given a regular, free abelian cover $X^{\nu}\to X$, with
$\dim_{\Q} H_i(X^{\nu},\Q)<\infty$ for all $i\le k$,  which
regular, finite abelian covers of $X^\nu$ have the
same homological finiteness property?

\begin{prop}[\cite{SYZ}]
\label{prop:tt}
Let $A$ be a finite abelian group, of order $e$.
Suppose the characteristic variety $\VV^i(X)$ decomposes as
$\bigcup_{j} \rho_j T_j$, with each $T_j$ an algebraic subgroup of
$\widehat{H}$, and each $\rho_j$ an element in $\widehat{H}$ such that
$\bar{\rho_j}\in \widehat{H}/T_j$ satisfies $\gcd(\ord(\bar{\rho_j}),e)=1$.
Then, given any regular, free abelian cover $X^{\nu}$ with
finite Betti numbers up to some degree $k\ge 1$, the regular
$A$-covers of $X^{\nu}$ have the same finiteness property.
\end{prop}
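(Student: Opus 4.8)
The plan is to translate the homological finiteness question into a statement about characteristic varieties and then apply the corollaries from the previous section. Recall the standard fact (from \cite{Su}) that a regular abelian cover $X^\mu \to X$ corresponding to an epimorphism $\mu\colon H \surj \Z^m$, or more generally to a surjection onto a finitely generated abelian group, has finite Betti numbers up to degree $k$ if and only if, for each $i \le k$, the characteristic variety $\VV^i(X)$ meets the connected subgroup (or the union of torsion translates of subgroups) cut out by $\mu$ in a finite set --- more precisely, $\dim\bigl(\VV^i(X) \cap \widehat{\coker}\bigr) = 0$, where $\widehat{\coker} \le \widehat{H}$ is the algebraic subgroup dual to $\ker\mu$. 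So the first step is to record carefully the characterization: if $X^\nu$ is the free abelian cover given by $\nu\colon H \surj \Z^m$ with corresponding connected subtorus $T^\nu = V(\ker\nu) \le \widehat H$, then $X^\nu$ has finite Betti numbers up to degree $k$ iff $\dim\bigl(\VV^i(X) \cap T^\nu\bigr)=0$ for all $i\le k$. An $A$-cover of $X^\nu$ then corresponds to a larger, possibly disconnected algebraic subgroup $P \le \widehat H$ with $P^\circ = T^\nu$ and $P/T^\nu \cong \widehat A$ (so $|P/T^\nu| = e$), and the same criterion applies with $T^\nu$ replaced by $P$.

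With this dictionary in place, the second step is purely a statement about intersections inside $\widehat H$. Writing $\VV^i(X) = \bigcup_j \rho_j T_j$, we must show that $\dim\bigl(\rho_j T_j \cap T^\nu\bigr) = 0$ for all $j$ (which we know by hypothesis on $X^\nu$) forces $\dim\bigl(\rho_j T_j \cap P\bigr) = 0$ for all $j$. Here $P$ is a union of $e$ torsion translates of $T^\nu$: choosing coset representatives, $P = \bigcup_{\tau} \tau T^\nu$ with each $\tau$ of order dividing $e$. By Corollary~\ref{cor:arbint}, $\rho_j T_j \cap P = \bigcup_\tau \bigl(\rho_j T_j \cap \tau T^\nu\bigr)$, so it suffices to bound the dimension of each piece $\rho_j T_j \cap \tau T^\nu$. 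If this intersection is empty there is nothing to prove; if it is non-empty, Corollary~\ref{dimension equal} (applied with $T_1 = T_j$, $T_2 = T^\nu$, $\alpha = \rho_j$, $\beta = 1 \cdot \tau^{-1}\cdot(\text{something})$ --- more precisely, comparing the translate $\rho_j T_j \cap \tau T^\nu$ with $\rho_j T_j \cap T^\nu$ after left-translating the ambient picture) would give that its dimension equals $\dim(\rho_j T_j \cap T^\nu)$, provided both are non-empty. The technical hinge is therefore: if $\rho_j T_j \cap \tau T^\nu \ne \emptyset$, then $\rho_j T_j \cap T^\nu \ne \emptyset$ as well.

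This is exactly where the coprimality hypothesis $\gcd(\ord(\bar\rho_j), e)=1$ enters, and it is the main obstacle. The element $\tau$ has order dividing $e$; the image $\bar\rho_j$ of $\rho_j$ in $\widehat H / T^\nu$ has order coprime to $e$, hence coprime to $\ord(\tau)$. Translating to the arithmetic side via Theorem~\ref{translated-prop}: non-emptiness of $\rho_j T_j \cap \tau T^\nu$ is the condition $\hat\gamma(\rho_j, \tau) \in \im(\hat\sigma)$ for the pair $(\xi_j, \xi^\nu) = (\epsilon(T_j), \epsilon(T^\nu))$, while non-emptiness of $\rho_j T_j \cap T^\nu$ is $\hat\gamma(\rho_j, 1) \in \im(\hat\sigma)$. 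Running the Bézout argument from the proof of Corollary~\ref{intersection empty} essentially verbatim --- raising $\hat\gamma(\rho_j,\tau)$ to a power $\equiv 1 \bmod \ord(\tau)$ and $\equiv 0 \bmod \ord(\bar\rho_j)$ in the appropriate slots --- should produce $\hat\gamma(\rho_j, 1) \in \im(\hat\sigma)$ from $\hat\gamma(\rho_j,\tau) \in \im(\hat\sigma)$. One subtlety to get right is that $\rho_j$ itself need not be torsion; only its class $\bar\rho_j$ modulo $T_j$ (or modulo $T^\nu$) matters, so the argument must be phrased in the quotient $\widehat{\xi_j} = T/V(\xi_j)$, where the relevant images do have the stated finite orders, and one checks that the power operation used in the Bézout step is compatible with $\hat\sigma$ and $\hat\gamma$. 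Once this hinge lemma is established, combining it with Corollary~\ref{dimension equal} and Corollary~\ref{cor:arbint} gives $\dim(\VV^i(X) \cap P) = \max_j \dim(\rho_j T_j \cap P) = 0$ for all $i \le k$, and the homological finiteness criterion then yields the conclusion.
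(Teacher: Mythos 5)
The paper does not actually prove this proposition: it is imported from \cite{SYZ}, and the only guidance given here is the remark that the proof in \cite{SYZ} uses Corollaries~\ref{dimension equal} and~\ref{intersection empty}. Your proposal reconstructs exactly that argument --- the finiteness dictionary from \cite{Su, SYZ}, the decomposition of the $A$-cover's subgroup $P$ into torsion translates of $T^\nu$ via Corollary~\ref{cor:arbint} and Lemma~\ref{lem:vxi}, the dimension comparison of Corollary~\ref{dimension equal}, and the B\'ezout step of Corollary~\ref{intersection empty} run in the quotients $T/T_j$ and $T/T^\nu$ where the relevant classes are genuinely torsion --- so it matches the route the paper signposts and is sound. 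One small correction: to extract $\hat\gamma(\rho_j,1)$ from $\hat\gamma(\rho_j,\tau)$ you need the exponent $k$ to satisfy $k\equiv 1 \bmod \ord(\bar\rho_j)$ and $k\equiv 0 \bmod \ord(\bar\tau)$, i.e.\ the two congruences in your sketch should be transposed.
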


\section{Exponential interpretation}
\label{sect:vexp}

In this section, we explore the relationship between the
correspondence $H \leadsto T=\widehat{H}$ from \S\ref{sect:corresp}
and the exponential map $\Lie(T) \to T$.

\subsection{The exponential map}
\label{subsec:exp}
Let $T$ be a complex abelian reductive group.
Denote by $\Lie(T)$ the Lie algebra of $T$.
The exponential map $\exp\colon \Lie(T) \to T$
is an analytic map, whose image is $T_0$,
the identity component of $T$.

As usual, set  $H=\check{T}$, and consider the lattice
$\HH=H^{\vee}\cong H/\Tors(H)$. We then can identify
 $T_0= \Hom(\HH^{\vee}, \C^*)$ and
$\Lie(T) =\Hom (\HH^{\vee},\C)$.  Under these
identifications, the corestriction to the image of
the exponential map can be written as
\begin{equation}
\label{eq:exp}
\exp=\Hom(-, e^{2 \pi \ii z})\colon\Hom(\HH^{\vee}, \C) \to
\Hom(\HH^{\vee}, \C^*),
\end{equation}
where $\C\to \C^*$, $z\mapsto e^z$ is the usual complex exponential.
Finally, upon identifying $\Hom (\HH^{\vee},\C)$ with $\HH\otimes \C$,
we see that $T_0=\exp(\HH\otimes \C)$.

The correspondence $T\leadsto  \HH=(\check{T})^{\vee}$
sends an algebraic subgroup $W$ inside $T$ to the
sublattice $\chi=(\check{W})^{\vee}$
inside $\HH$.  Clearly, $\chi=\Lie(W) \cap \HH$  is a
primitive lattice; furthermore,
$\exp(\chi \otimes \C)=W_0$.

Now let $\chi_1$ and $\chi_2$ be two sublattices in $\HH$.
Since the exponential map is a group homomorphism,
we have the following equality:
\begin{equation}
\label{eq:exp2}
\exp((\chi_1+\chi_2) \otimes \C)=\exp(\chi_1 \otimes \C)\cdot
\exp(\chi_2 \otimes \C).
\end{equation}
On the other hand, the intersection of the two algebraic
subgroups $\exp(\chi_1 \otimes \C)$ and $\exp(\chi_2 \otimes \C)$
need not be connected, so it cannot be expressed solely in terms
of the exponential map.  Nevertheless, we will give a precise formula
for this intersection in Theorem \ref{thm:exp} below.

\subsection{Exponential map and Pontrjagin duality}
\label{subsec:exp pont}
First, we need to study the relationship between the exponential map
and the correspondence from  Proposition \ref{prop:abred}.

\begin{lemma}
\label{lem:vexp}
Let $T$ be a complex abelian reductive group, and let
$\HH=(\check{T})^{\vee}$.  Let $\chi\le \HH$ be a sublattice.
We then have an equality of connected algebraic subgroups,
\begin{equation}
\label{eq:chi}
V((\HH/\chi)^{\vee})= \exp(\chi \otimes \C),
\end{equation}
inside $T_0=\exp(\HH \otimes \C)$.
\end{lemma}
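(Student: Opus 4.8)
The statement to prove is Lemma~\ref{lem:vexp}: for a sublattice $\chi \le \HH = (\check{T})^{\vee}$, we have the equality of connected algebraic subgroups $V((\HH/\chi)^{\vee}) = \exp(\chi \otimes \C)$ inside $T_0$. The plan is to identify both sides with the same subgroup of $T_0 = \Hom(\HH^{\vee},\C^*)$ by using the exactness of $\Hom(-,\C^*)$ together with the explicit description of $\exp$ from~\eqref{eq:exp}. First I would reduce to the torus case: since $V(-)$ only sees $H/\Tors(H)$, and $\HH = H^\vee$ is already torsion-free, we lose nothing by replacing $T$ with $T_0$ and $H$ with $\HH^{\vee}$, so that $T_0 = \widehat{\HH^{\vee}}$ and $\check{T_0} = \HH^{\vee}$. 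Note $\chi \le \HH$ primitive or not does not matter for the statement, but by Lemma~\ref{lem:dual lattice} the subgroup $(\HH/\chi)^{\vee} \le \HH^{\vee}$ is automatically primitive, which is why $V((\HH/\chi)^{\vee})$ is connected, matching the connectedness of $\exp(\chi\otimes\C)$.

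Next I would compute the right-hand side. By definition $\exp(\chi\otimes\C)$ is the image under~\eqref{eq:exp} of $\Hom(\HH^{\vee},\C)$ restricted along... more precisely, write $\HH = \Z^r$ after a splitting and $\chi\otimes\C \subseteq \HH\otimes\C = \Hom(\HH^{\vee},\C)$; then $\exp(\chi\otimes\C)$ is the subgroup of $\Hom(\HH^{\vee},\C^*)$ consisting of homomorphisms $\HH^{\vee}\to\C^*$ that factor (after tensoring with $\C$) through elements of $\chi\otimes\C$. Concretely, a character $t^{\lambda}$ (for $\lambda\in\HH^{\vee}$) restricted to $\exp(\chi\otimes\C)$ is trivial if and only if $\lambda$ pairs to zero with all of $\chi$, i.e. $\lambda \in \ann(\chi)$. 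On the other hand, $V((\HH/\chi)^{\vee}) = \Spm(\C[\HH^{\vee}/(\HH/\chi)^{\vee}])$, and the subgroup of $\HH^{\vee}$ killing it is exactly $(\HH/\chi)^{\vee}$ by Lemma~\ref{lem:eve}. So the equality of the two algebraic subgroups reduces to the identification $(\HH/\chi)^{\vee} = \ann(\chi)$ inside $\HH^{\vee} = \Hom(\HH,\Z)$, which is immediate: a homomorphism $\HH\to\Z$ factors through $\HH/\chi$ iff it kills $\chi$.

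Then I would assemble: since $\epsilon$ (the operator sending an algebraic subgroup to the lattice of characters vanishing on it) is a bijection onto subgroups of $\check{T_0}$ by Theorem~\ref{theorem-isomorphic}, and since $\epsilon(\exp(\chi\otimes\C)) = \ann(\chi) = (\HH/\chi)^{\vee} = \epsilon(V((\HH/\chi)^{\vee}))$, the two subgroups coincide. Finally I would remark that $V((\HH/\chi)^{\vee})$ is connected by Lemma~\ref{lem:dual lattice} and Corollary~\ref{cor:primitive}, consistent with the image of $\exp$ always being connected.

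**Expected main obstacle.** The only delicate point is bookkeeping the several dualizations at once — keeping straight which of $\HH$, $\HH^{\vee}$, $\chi$, $\chi\otimes\C$, $(\HH/\chi)^{\vee}$, and $\ann(\chi)$ lives in which space, and verifying that the identification $\Lie(T_0) = \Hom(\HH^{\vee},\C) = \HH\otimes\C$ is the one compatible with~\eqref{eq:exp}, so that "exponentiating the subspace $\chi\otimes\C$" really does produce the subgroup whose vanishing ideal is $\ann(\chi)$. Once the exponential map is written in the form~\eqref{eq:exp} as $\Hom(-,e^{2\pi\ii z})$, this compatibility is essentially forced, and the rest is the elementary linear-algebra fact about annihilators of sublattices, so I do not anticipate any real difficulty beyond careful notation.
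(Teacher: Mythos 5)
Your argument is correct in substance but follows a genuinely different route from the paper's. You compute the vanishing lattice $\epsilon(\exp(\chi\otimes\C))$ directly --- a character $t^{\lambda}$, $\lambda\in\HH^{\vee}$, kills $\exp(\chi\otimes\C)$ iff $\lambda(z)\in\Z$ for all $z$ in the $\C$-subspace $\chi\otimes\C$, iff $\lambda\in\ann(\chi)=(\HH/\chi)^{\vee}$ --- and then invoke the bijectivity of $\epsilon$ from Theorem~\ref{theorem-isomorphic}. The paper instead dualizes $\pi\colon\HH\to\HH/\chi$, extracts the exact sequence $0\to K\to\chi^{\vee}\to\overline{\chi}/\chi\to 0$ of \eqref{eq:K}, applies $\Hom(-,\C^*)$ to identify $\Hom(K,\C^*)=V((\HH/\chi)^{\vee})$, and then chases the commutative diagram \eqref{eq:homdiag} relating $\exp$ on $\Hom(\chi^{\vee},\C)\cong\Hom(K,\C)$ to $\exp$ on $\Hom(\HH^{\vee},\C)$. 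Your version is shorter and makes the underlying linear algebra (annihilators of sublattices) transparent; the paper's version produces the surjection $\chi\otimes\C\twoheadrightarrow V((\HH/\chi)^{\vee})$ explicitly, which is what makes the equality, rather than just one inclusion, immediate.

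That difference points to the one step you should not leave implicit: to conclude from $\epsilon(\exp(\chi\otimes\C))=\epsilon(V((\HH/\chi)^{\vee}))$ that the two subgroups are equal, you need $\exp(\chi\otimes\C)$ to be a \emph{closed} algebraic subgroup; for a subgroup $W$ that is not known to be closed, the computation of $\epsilon(W)$ only identifies the Zariski closure $\overline{W}=V(\epsilon(W))$. Equivalently, your argument as written yields $\exp(\chi\otimes\C)\subseteq V((\HH/\chi)^{\vee})$ together with density, and the reverse inclusion (surjectivity of $\exp$ onto $V((\HH/\chi)^{\vee})$) is exactly the content the paper's diagram chase supplies via the surjectivity of $\Hom(K,\C)\to\Hom(K,\C^*)$. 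The fix is easy --- note $\chi\otimes\C=\overline{\chi}\otimes\C$ with $\overline{\chi}$ a direct summand of $\HH$, so in adapted coordinates $\exp(\chi\otimes\C)=(\C^*)^n\times\{1\}$ is closed --- but it should be said.
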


\begin{proof}
Let $\pi\colon \HH\to \HH/\chi$ be the canonical projection,
and let $K=\coker(\pi^{\vee})$.
As in \eqref{eq:K}, we have an exact sequence,
$0 \to  K \to \chi^{\vee} \to \overline{\chi}/\chi \to  0$.
Applying the functor $\Hom(-, \C^*)$ to this
sequence, we obtain a new short exact sequence,
\begin{equation}
\label{eq:hom} \xymatrix{ 0  \ar[r]& \Hom(\overline{\chi}/\chi,\C^*)
\ar[r]& \Hom(\chi^{\vee}, \C^*)  \ar[r]& \Hom(K, \C^*)  \ar[r]& 0 }.
\end{equation}
From the way the functor $V$ was defined in \eqref{eq:ve},
we have that $\Hom(K, \C^*) =V((\HH/\chi)^{\vee})$.
Composing with the map $\exp\colon
\C\to \C^*$, we obtain the following commutative diagram:
\begin{equation}
\label{eq:homdiag}
\xymatrixrowsep{18pt}
\xymatrix{ \Hom(\chi^{\vee}, \C)
\ar[r]^(.48){\exp} \ar[d]^{\cong}
            & \Hom(\chi^{\vee}, \C^*) \ar@{->>}[d]\\
            \Hom(K, \C) \ar@{^{(}->}[d] \ar[r]^(.36){\exp}
            & \Hom(K, \C^*)=V((\HH/\chi)^{\vee}) \ar@{^{(}->}[d]\\
             \Hom(\HH^{\vee}, \C) \ar[r]^(.45){\exp}& \Hom(\HH^{\vee},\C^*)
             }
\end{equation}

Identify now $\exp(\chi \otimes \C)$ with the image of
$\Hom(\chi^{\vee}, \C)$ in $\Hom(\HH^{\vee}, \C^*)$. Clearly,
this image  coincides with the image of $V((\HH/\chi)^{\vee})$
in $T_0=\Hom(\HH^{\vee},\C^*)$, and so we are done.
\end{proof}

\begin{corollary}
\label{cor:hexp}
Let $H$ be a finitely generated abelian group, and
let $\xi \le H$ be a subgroup.
Consider the sublattice $\chi=(H/\xi)^{\vee}$ inside $\HH=H^{\vee}$.
Then
\begin{equation}
\label{eq:hchi}
V(\overline{\xi})= \exp(\chi \otimes \C).
\end{equation}
\end{corollary}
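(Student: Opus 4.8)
The plan is to reduce the statement to Lemma \ref{lem:vexp} by identifying the two sublattices involved. Lemma \ref{lem:vexp} asserts that for any sublattice $\chi \le \HH$ one has $V((\HH/\chi)^{\vee}) = \exp(\chi \otimes \C)$, where on the left $V$ is applied to a subgroup of $\HH = (\check{T})^{\vee}$. Here we are given $\chi = (H/\xi)^{\vee} \le H^{\vee} = \HH$, so Lemma \ref{lem:vexp} directly gives $V((\HH/\chi)^{\vee}) = \exp(\chi \otimes \C)$. Thus the entire content of the corollary is the identification of subgroups of $H$:
\[
(\HH/\chi)^{\vee} = \bigl(H^{\vee}/(H/\xi)^{\vee}\bigr)^{\vee} = \overline{\xi}.
\]
So the task is purely one of linear/homological algebra about duals of finitely generated abelian groups.

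First I would recall, from the proof of Lemma \ref{lem:dual lattice}, the exact sequence obtained by dualizing $0 \to \xi \to H \to H/\xi \to 0$, namely
\[
0 \to (H/\xi)^{\vee} \xrightarrow{\pi^{\vee}} H^{\vee} \to \xi^{\vee} \to \Ext(H/\xi,\Z) \to 0,
\]
which splits (upon setting $K = \coker \pi^{\vee}$) into $0 \to (H/\xi)^{\vee} \to H^{\vee} \to K \to 0$ and $0 \to K \to \xi^{\vee} \to \overline{\xi}/\xi \to 0$. Since $H/(H/\xi)^{\vee}$ — more precisely $\HH/\chi$ — is identified with $K$, and $K$ is torsion-free (being a subgroup of the free abelian group $\xi^{\vee}$), the sublattice $\chi = (H/\xi)^{\vee}$ is primitive in $\HH$, and dualizing the first short exact sequence gives
\[
0 \to K^{\vee} \to H^{\vee\vee} \to \chi^{\vee} \to 0
\]
(the $\Ext$ term vanishes as $K$ is free). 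Now $H^{\vee\vee} \cong H/\Tors(H)$ canonically, so $(\HH/\chi)^{\vee} = K^{\vee}$ is a subgroup of $H/\Tors(H)$ of the same rank as $\xi$; being the kernel of a map to the torsion-free group $\chi^{\vee}$, it is primitive. It therefore suffices to check that this subgroup contains the image of $\xi$ in $H/\Tors(H)$ — then, being primitive of the same rank and containing $\xi$, it must equal $\overline{\xi}$ (modulo torsion, which is absorbed anyway since $\overline{\xi} \supseteq \Tors(H)$). That containment follows by a diagram chase: an element $\lambda \in \xi$ pairs to zero with every functional in $K \subseteq \xi^{\vee}$ that is restricted from $H^{\vee}/(H/\xi)^{\vee}$, since such functionals factor through $H \to H/\xi$ by construction.

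I expect the main obstacle to be bookkeeping the various canonical identifications — in particular keeping straight which double-dual is $H$ versus $H/\Tors(H)$, and reconciling the ``$V$ of a subgroup of $H$'' with the ``$V$ of a subgroup of $\HH$'' as they appear in \eqref{eq:ve} versus Lemma \ref{lem:vexp}. An alternative, perhaps cleaner, route avoiding double-duals entirely: since both $V(\overline{\xi})$ and $\exp(\chi\otimes\C)$ are connected algebraic subgroups of $T_0$, by Corollary \ref{cor:primitive} it is enough to match their corresponding primitive subgroups under the correspondence of Theorem \ref{theorem-isomorphic}. The subgroup corresponding to $V(\overline{\xi})$ is $\overline{\xi}$ itself; by Lemma \ref{lem:vexp} the subgroup corresponding to $\exp(\chi\otimes\C) = V((\HH/\chi)^{\vee})$ is $(\HH/\chi)^{\vee}$. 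So once more everything reduces to the identity $(\HH/\chi)^{\vee} = \overline{\xi}$ inside $H/\Tors(H)$, and the corollary follows. I would present the double-dual computation as the body of the proof, since it is short and self-contained given the sequence already established in Lemma \ref{lem:dual lattice}.
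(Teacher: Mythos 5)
Your proposal is correct and follows essentially the same route as the paper: the paper's proof simply notes that $\overline{\xi}=(\HH/\chi)^{\vee}$ as subgroups of $H/\Tors(H)=\HH^{\vee}$ and then invokes Lemma \ref{lem:vexp}. You merely supply the details of that identification (via the exact sequences from Lemma \ref{lem:dual lattice}) which the paper leaves to the reader.
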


\begin{proof}
Note that $\overline{\xi}=(\HH/\chi)^{\vee}$, as subgroups of
$H/\Tors(H)=\HH^{\vee}$.  The desired equality follows at
once from Lemma \ref{lem:vexp}.
\end{proof}

\subsection{Exponentials and determinant groups}
\label{subsec:exp det}

We are now in a position to state and prove the main result
of this section (Theorem \ref{thm:intro2} from the Introduction).

\begin{theorem}
\label{thm:exp}
Let $T$ be a complex abelian reductive group, and
let $\chi_1$ and $\chi_2$ be two sublattices of $\HH=\check{T}^{\vee}$.

\begin{enumerate}
\item \label{x1}
Set $\xi=(\HH/\chi_1)^{\vee} +(\HH/\chi_2)^{\vee}\le \HH^{\vee}$
and $\chi=(\HH^{\vee}/\overline{\xi})^{\vee}\le \HH$.  Then
\[
\exp(\chi_1 \otimes \C) \cap \exp(\chi_2 \otimes \C) =
\widehat{\overline{\xi}/\xi}\cdot \exp(\chi\otimes \C),
\]
as algebraic subgroups of $T_0$.
Moreover, the identity component of both these groups is
$V(\overline{\xi})=\exp(\chi \otimes \C)$.

\item \label{x2}
Now suppose  $\chi_1$ and $\chi_2$ are
primitive sublattices of $\HH$, with $\chi_1 \cap \chi_2=0$.
We then have
an isomorphism of finite abelian groups,
\[
\exp(\chi_1 \otimes \C) \cap \exp(\chi_2 \otimes \C) \cong
\overline{\chi_1 + \chi_2}/\chi_1 + \chi_2.
\]
\end{enumerate}
\end{theorem}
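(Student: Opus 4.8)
The plan is to chain together results already in hand: Lemma~\ref{lem:vexp}, Lemma~\ref{lem:vlattice}, Lemma~\ref{lem:vxi}, Corollary~\ref{cor:hexp}, and Proposition~\ref{prop:xi}. Throughout I work inside $T_0=\exp(\HH\otimes\C)=\widehat{\HH^\vee}$, so that the functor $V$ and the exponential map both refer to the weight group $\HH^\vee$.

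For part~\eqref{x1}, I would first use Lemma~\ref{lem:vexp} to rewrite each factor as $\exp(\chi_i\otimes\C)=V((\HH/\chi_i)^\vee)$, with $(\HH/\chi_i)^\vee$ regarded as a sublattice of $\HH^\vee$. Lemma~\ref{lem:vlattice} then collapses the intersection to a single $V$-subgroup,
\[
\exp(\chi_1\otimes\C)\cap\exp(\chi_2\otimes\C)=V\bigl((\HH/\chi_1)^\vee\bigr)\cap V\bigl((\HH/\chi_2)^\vee\bigr)=V(\xi),
\]
and Lemma~\ref{lem:vxi} supplies the decomposition $V(\xi)=\widehat{\overline{\xi}/\xi}\cdot V(\overline{\xi})$ into irreducible components, with $V(\overline{\xi})$ the identity component. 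It remains to identify $V(\overline{\xi})$ with $\exp(\chi\otimes\C)$: I would apply Corollary~\ref{cor:hexp} with $\HH^\vee$ in the role of $H$ and the already primitive subgroup $\overline{\xi}$ in the role of $\xi$, so that the corollary's distinguished sublattice is precisely $\chi=(\HH^\vee/\overline{\xi})^\vee$ and its conclusion reads $V(\overline{\xi})=\exp(\chi\otimes\C)$. Substituting yields the asserted equality, and the identity-component statement is then immediate from Lemma~\ref{lem:vxi}\eqref{vx1}.

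For part~\eqref{x2}, the extra input is that, under the stated hypotheses, $\overline{\xi}=\HH^\vee$, so that $V(\overline{\xi})$ is trivial. This is an annihilator computation: $(\HH/\chi_i)^\vee$ is the annihilator $\chi_i^{\perp}\le\HH^\vee$ of $\chi_i$, and passing to rational spans (where the pairing is perfect) shows that $\overline{\chi_1^{\perp}+\chi_2^{\perp}}$ is the annihilator of $\chi_1\cap\chi_2=0$, hence all of $\HH^\vee$; thus $\xi=\chi_1^{\perp}+\chi_2^{\perp}$ has primitive closure $\HH^\vee$. Feeding this back into part~\eqref{x1}, the intersection becomes the finite group $\widehat{\overline{\xi}/\xi}$, which is isomorphic to $\overline{\xi}/\xi$. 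Finally, I would invoke Proposition~\ref{prop:xi}, applied with $\HH$ in the role of $H$ and $\chi_1,\chi_2$ in the roles of $\xi_1,\xi_2$, to obtain $\overline{\xi}/\xi\cong\overline{\chi_1+\chi_2}/\chi_1+\chi_2$, which completes the proof.

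The computations here are light; the one place demanding care is the bookkeeping between $\HH$ and its dual $\HH^\vee$ --- making sure that the lattice $\chi$ in the statement is exactly the one to which Corollary~\ref{cor:hexp} applies, and that the roles in Proposition~\ref{prop:xi} (which is phrased for an abstract group and its dual) are assigned consistently. Once that matching is set up correctly, the argument is a direct concatenation of the cited statements together with the elementary annihilator identity used in part~\eqref{x2}.
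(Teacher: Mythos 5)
Your proposal is correct and follows essentially the same route as the paper: Lemma~\ref{lem:vexp} to convert each $\exp(\chi_i\otimes\C)$ into $V((\HH/\chi_i)^{\vee})$, Lemma~\ref{lem:vlattice} to collapse the intersection to $V(\xi)$, Lemma~\ref{lem:vxi} for the component decomposition, and Proposition~\ref{prop:xi} for part~\eqref{x2}. The only (harmless) divergences are that you route the identification $V(\overline{\xi})=\exp(\chi\otimes\C)$ through Corollary~\ref{cor:hexp} rather than citing Lemma~\ref{lem:vexp} a second time, and that you spell out the annihilator argument for $\overline{\xi}=\HH^{\vee}$ which the paper merely asserts.
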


\begin{proof}
To prove part \eqref{x1}, note that
\begin{align*}
  \exp(\chi_1 \otimes \C) \cap \exp(\chi_2 \otimes \C)
  &=  V((\HH/\chi_1)^{\vee}) \cap V((\HH/\chi_2)^{\vee})
  & \text{by Lemma  \ref{lem:vexp}\phantom{.}} \\
  & = V((\HH/\chi_1)^{\vee} + (\HH/\chi_2)^{\vee})
  & \text{by Lemma \ref{lem:vlattice}\phantom{.}}\\
  & = \widehat{\overline{\xi}/\xi}\cdot V(\overline{\xi})
  & \text{by Lemma \ref{lem:vxi}.}
\end{align*}
Finally, note that $(\HH/\chi)^{\vee}=\overline{\xi}$; thus,
$V(\overline{\xi})=\exp(\chi\otimes \C)$, again
by Lemma  \ref{lem:vexp}.

To prove part \eqref{x2}, note that  $\overline{\xi}=\HH^{\vee}$,
since we are assuming $\chi_1\cap \chi_2=0$.
Hence, $V(\overline{\xi})=\{1\}$.   Since we are
also assuming that the lattices $\chi_1$ and
$\chi_2$ are primitive, Proposition \ref{prop:xi} applies,
giving that $\overline{\xi}/\xi \cong
\overline{\chi_1 + \chi_2}/\chi_1 + \chi_2$.
Using now part \eqref{x1} finishes the proof.
\end{proof}

The next corollary follows at once from Theorem \ref{thm:exp}.

\begin{corollary}
\label{cor:exph}
Let $H$ be a finitely generated abelian group, and
let $\xi_1$ and $\xi_2$ be two subgroups.
Let $\HH=H^{\vee}$ be the dual lattice, and
$\chi_i =(H/\xi_i)^{\vee}$ the corresponding sublattices.

\begin{enumerate}
\item \label{y1} Set $\xi=\overline{\xi_1} + \overline{\xi_2}$. Then:
\[
\exp(\chi_1 \otimes \C) \cap \exp(\chi_2 \otimes \C) =
\widehat{\overline{\xi}/\xi}\cdot  V(\overline{\xi}).
\]

\item \label{y2}
Now suppose $\rank(\xi_1 + \xi_2)=\rank(H)$.
Then
\[
\exp(\chi_1 \otimes \C) \cap \exp(\chi_2 \otimes \C) \cong
\overline{\chi_1 + \chi_2}/\chi_1 + \chi_2\cong
\overline{\xi}/\xi.
\]
\end{enumerate}
\end{corollary}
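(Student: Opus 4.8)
The plan is to deduce this corollary directly from Theorem~\ref{thm:exp} by matching the hypotheses and unwinding the two $\vee$-dualizations so that the primitive-closure operation on $\HH^\vee=H/\Tors(H)$ transports cleanly to the original data. The first step is to reduce to the free case: replace $H$ by $H/\Tors(H)$ and each $\xi_i$ by its image; since $\chi_i=(H/\xi_i)^\vee=((H/\Tors(H))/\overline{\xi_i})^\vee$ already only sees the torsion-free quotient, and since $\exp$ only sees $T_0$, nothing is lost. From Lemma~\ref{lem:dual lattice} the sublattices $\chi_i\le\HH$ are primitive, and by Corollary~\ref{cor:hexp} we have $V(\overline{\xi_i})=\exp(\chi_i\otimes\C)$; this is the bridge between the ``$\xi$-side'' and the ``$\chi$-side.''

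For part~\eqref{y1}, I would compute the intersection two ways. By Corollary~\ref{cor:hexp} and Lemma~\ref{lem:vlattice},
\[
\exp(\chi_1\otimes\C)\cap\exp(\chi_2\otimes\C)=V(\overline{\xi_1})\cap V(\overline{\xi_2})=V(\overline{\xi_1}+\overline{\xi_2})=V(\xi),
\]
so the statement reduces to the decomposition $V(\xi)=\widehat{\overline{\xi}/\xi}\cdot V(\overline{\xi})$, which is exactly Lemma~\ref{lem:vxi}. Alternatively one can invoke Theorem~\ref{thm:exp}\eqref{x1} after checking that the lattice $(\HH/\chi_i)^\vee$ appearing there equals $\overline{\xi_i}$ inside $\HH^\vee=H$; this is the identity $\overline{\xi_i}=(\HH/(H/\xi_i)^\vee)^\vee$ already used in the proof of Corollary~\ref{cor:hexp}, and then the $\xi$ of Theorem~\ref{thm:exp} is exactly $\overline{\xi_1}+\overline{\xi_2}$, our $\xi$. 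Either route is short.

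For part~\eqref{y2}, the hypothesis $\rank(\xi_1+\xi_2)=\rank(H)$ forces $\rank(\overline{\xi_1}+\overline{\xi_2})=\rank(H)$, hence $\overline{\xi}=H$ and $V(\overline{\xi})=\{1\}$, so by part~\eqref{y1} the intersection is the finite group $\widehat{\overline{\xi}/\xi}\cong\overline{\xi}/\xi$. It remains to identify $\overline{\xi}/\xi$, with $\xi=\overline{\xi_1}+\overline{\xi_2}$, with $\overline{\chi_1+\chi_2}/(\chi_1+\chi_2)$. After the reduction to the free case the $\chi_i$ are primitive lattices in $\HH$, and $\chi_1\cap\chi_2=0$ because $\rank\chi_i=\rank H-\rank\overline{\xi_i}$ forces $\rank(\chi_1+\chi_2)=\rank H$ (using $\rank\overline{\xi_1}+\rank\overline{\xi_2}=\rank\xi=\rank H$ together with the rank formula for the modular lattice $\cL(H)$), so $\chi_1$ and $\chi_2$ meet in rank $0$, i.e.\ trivially. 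Thus Proposition~\ref{prop:xi} applies with the roles of $H$ and $H^\vee$ as in Theorem~\ref{thm:exp}\eqref{x2}, giving $\overline{\xi}/\xi\cong\overline{\chi_1+\chi_2}/(\chi_1+\chi_2)$, and combining with part~\eqref{y1} yields the stated chain of isomorphisms. The only place requiring care is this bookkeeping of which lattice lives in $H$ versus $H^\vee$ and verifying the two transversality hypotheses $\overline{\xi}=\HH^\vee$ and $\chi_1\cap\chi_2=0$ needed to invoke Proposition~\ref{prop:xi}; once those are pinned down the corollary is immediate.
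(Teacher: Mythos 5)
Your overall route is the paper's route: the paper proves this corollary in one line (``follows at once from Theorem~\ref{thm:exp}''), and your unwinding of the dualizations --- $(\HH/\chi_i)^{\vee}=\overline{\xi_i}$, hence the $\xi$ of Theorem~\ref{thm:exp} is $\overline{\xi_1}+\overline{\xi_2}$ --- together with the alternative direct computation via Corollary~\ref{cor:hexp}, Lemma~\ref{lem:vlattice} and Lemma~\ref{lem:vxi}, is exactly the intended content. Part~\eqref{y1} is fine.

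In part~\eqref{y2}, however, your verification that $\chi_1\cap\chi_2=0$ rests on a false intermediate claim. You assert $\rank\overline{\xi_1}+\rank\overline{\xi_2}=\rank\xi=\rank H$, but the rank formula gives $\rank\overline{\xi_1}+\rank\overline{\xi_2}=\rank(\overline{\xi_1}+\overline{\xi_2})+\rank(\overline{\xi_1}\cap\overline{\xi_2})$, and part~\eqref{y2} does \emph{not} assume $\xi_1\cap\xi_2$ is finite (take $H=\Z^2$, $\xi_1=H$, $\xi_2=\Z e_1$: the hypothesis $\rank(\xi_1+\xi_2)=\rank H$ holds but $\rank\xi_1+\rank\xi_2=3$). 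Moreover, $\rank(\chi_1+\chi_2)=\rank H$ alone does not force $\chi_1\cap\chi_2=0$; you would also need $\rank\chi_1+\rank\chi_2=\rank H$. The conclusion you need is nevertheless true, and the clean argument is the one you skipped: a functional on $H$ vanishing on both $\xi_1$ and $\xi_2$ vanishes on $\xi_1+\xi_2$, so
\[
\chi_1\cap\chi_2=(H/\xi_1)^{\vee}\cap(H/\xi_2)^{\vee}=\bigl(H/(\xi_1+\xi_2)\bigr)^{\vee},
\]
which has rank $\rank H-\rank(\xi_1+\xi_2)=0$ and hence is trivial, being a subgroup of the free group $\HH$. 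With that repair, the appeal to Proposition~\ref{prop:xi} (applied in $\HH$, with the roles of $H$ and $H^{\vee}$ swapped as in Theorem~\ref{thm:exp}\eqref{x2}) goes through and the rest of your argument is correct.
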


\subsection{Some applications}
\label{subsec:nazir}
Let us now consider the case when $T=(\C^*)^r$.
In this case, $\HH=\Z^r$, and the exponential map \eqref{eq:exp}
can be written in coordinates as $\exp\colon \C^r\to (\C^*)^r$,
$(z_1,\dots ,z_r)\mapsto (e^{2 \pi \ii z_1},\dots ,e^{2 \pi \ii z_r})$.
Applying Theorem \ref{thm:exp} to this situation,
we recover Theorem 1.1 from \cite{Na}.

\begin{corollary}[Nazir \cite{Na}]
\label{cor:nazir}
Suppose $\chi_1$ and $\chi_2$ are primitive lattices in $\Z^r$,
and $\chi_1 \cap \chi_2=0$. Then
$\exp(\chi_1 \otimes \C) \cap \exp(\chi_2 \otimes \C)$ is isomorphic to
$\overline{\chi_1 + \chi_2}/\chi_1 + \chi_2$.
\end{corollary}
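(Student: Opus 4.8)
The plan is to deduce the statement directly from part~\eqref{x2} of Theorem~\ref{thm:exp}, specialized to the algebraic torus $T=(\C^*)^r$. First I would record the identifications relevant to this case: one has $\check{T}=\Hom_{\alg}((\C^*)^r,\C^*)\cong\Z^r$, hence $\HH=\check{T}^{\vee}\cong\Z^r$ as well, and under these identifications the exponential map $\exp\colon\Lie(T)=\HH\otimes\C\to T$ of \eqref{eq:exp} becomes the coordinatewise map $\C^r\to(\C^*)^r$, $(z_1,\dots,z_r)\mapsto(e^{2\pi\ii z_1},\dots,e^{2\pi\ii z_r})$, as already noted just above. In particular $T=T_0$ is connected, so there is no discrepancy between $T$ and $T_0$ to track.

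Next I would observe that the hypotheses of the corollary are precisely the hypotheses of Theorem~\ref{thm:exp}\eqref{x2}: the lattices $\chi_1$ and $\chi_2$ are primitive sublattices of $\HH=\Z^r$ with $\chi_1\cap\chi_2=0$. Feeding these into Theorem~\ref{thm:exp}\eqref{x2} yields an isomorphism of finite abelian groups
\[
\exp(\chi_1\otimes\C)\cap\exp(\chi_2\otimes\C)\;\cong\;\overline{\chi_1+\chi_2}/\chi_1+\chi_2,
\]
which is the assertion.

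There is no genuine obstacle at this point, since the analytic and lattice‑theoretic content has been absorbed into Theorem~\ref{thm:exp} (whose proof in turn rests on Proposition~\ref{prop:xi} and Lemma~\ref{lem:vexp}). The only step deserving a word of care is the bookkeeping of the two applications of duality involved: passing from $T$ to $\HH=\check{T}^{\vee}$, and then from a sublattice $\chi\le\HH$ back to the connected algebraic subgroup $\exp(\chi\otimes\C)=V((\HH/\chi)^{\vee})\le T$. But this is exactly the identification supplied by Lemma~\ref{lem:vexp} (and repackaged in Corollary~\ref{cor:hexp}), so the proof requires no new computation beyond invoking these results.
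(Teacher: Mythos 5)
Your proposal is correct and matches the paper's own treatment: the paper derives Corollary~\ref{cor:nazir} precisely by specializing Theorem~\ref{thm:exp}\eqref{x2} to $T=(\C^*)^r$ with $\HH=\Z^r$, exactly as you do. The extra bookkeeping you supply (the identifications via Lemma~\ref{lem:vexp}) is consistent with, and slightly more explicit than, the paper's one-line deduction.
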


As another application, let us show that intersections of
subtori can be at least as complicated as arbitrary finite
abelian groups.

\begin{corollary}
\label{cor:dimtori}
For any integer $n\ge 0$, and any finite abelian group $A$,
there exist subtori $T_1$ and $T_2$ in some complex
algebraic torus $(\C^{*})^r$ such that
$T_1 \cap T_2\cong (\C^*)^n \times A$.
\end{corollary}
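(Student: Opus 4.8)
The plan is to exhibit $(\C^*)^n\times A$ as an intersection $V(\xi_1)\cap V(\xi_2)$ of two subtori of a suitable $(\C^*)^r$, chosen so that the sum $\xi_1+\xi_2$ has corank $n$ and determinant group $\overline{\xi_1+\xi_2}/(\xi_1+\xi_2)\cong A$; the structure of the intersection can then be read off from Lemma~\ref{lem:vlattice} and Lemma~\ref{lem:vxi}.

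Concretely, I would write $A\cong\bigoplus_{j=1}^{t}\Z/d_j\Z$ as a direct sum of cyclic groups, set $r=2t+n$, and take $H=\Z^r$ with basis $e_1,\dots,e_t,f_1,\dots,f_t,g_1,\dots,g_n$. Put
\[
\xi_1=\angl{e_1,\dots,e_t},\qquad \xi_2=\angl{e_1+d_1f_1,\dots,e_t+d_tf_t}.
\]
In both cases the submatrix of generators on the rows $e_1,\dots,e_t$ is the identity, so $\xi_1$ and $\xi_2$ are primitive lattices; hence by Corollary~\ref{cor:primitive} the subgroups $T_i:=V(\xi_i)$ are subtori of $T=\widehat H\cong(\C^*)^r$. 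A direct computation gives $\xi_1+\xi_2=\angl{e_1,\dots,e_t,d_1f_1,\dots,d_tf_t}$, which has rank $2t$ and primitive closure $\overline{\xi_1+\xi_2}=\angl{e_1,\dots,e_t,f_1,\dots,f_t}$ (the $g_j$-coordinates of every element of $\xi_1+\xi_2$ vanish). Therefore $\corank(\xi_1+\xi_2)=n$, so $V(\overline{\xi_1+\xi_2})\cong(\C^*)^n$, and
\[
\overline{\xi_1+\xi_2}/(\xi_1+\xi_2)\cong\bigoplus_{j=1}^{t}\Z/d_j\Z\cong A.
\]

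Now by Lemma~\ref{lem:vlattice} we have $T_1\cap T_2=V(\xi_1)\cap V(\xi_2)=V(\xi_1+\xi_2)$, and by Lemma~\ref{lem:vxi}, using that the sequence \eqref{eq:ex2} splits, $V(\xi_1+\xi_2)$ is the internal direct product $\widehat{\overline{\xi_1+\xi_2}/(\xi_1+\xi_2)}\cdot V(\overline{\xi_1+\xi_2})\cong A\times(\C^*)^n$, which is the desired conclusion. I do not anticipate any real difficulty; the only points needing a word of justification are the primitivity of $\xi_1$ and $\xi_2$ (guaranteeing that the $T_i$ are connected) and the fact that the extension \eqref{eq:ex2} splits, so that one gets a genuine direct product rather than merely an algebraic group with identity component $(\C^*)^n$ and component group $A$. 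Alternatively, one can first handle the case $A=\Z/d\Z$, $n=0$ inside $(\C^*)^2$ by taking $T_1=\{1\}\times\C^*$ and $T_2=\{(w^d,w):w\in\C^*\}$, and then pass to a product of ambient tori using $(\prod_i T_1^{(i)})\cap(\prod_i T_2^{(i)})=\prod_i(T_1^{(i)}\cap T_2^{(i)})$; or run the same argument in the exponential picture of Section~\ref{sect:vexp} via Corollary~\ref{cor:exph}.
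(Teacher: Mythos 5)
Your proof is correct, but it takes a genuinely different route from the paper's. The paper works on the exponential side: it takes the primitive lattices $\chi_1,\chi_2\le\Z^{2k}$ spanned by the columns of $\bigl(\begin{smallmatrix} I_k\\ 0\end{smallmatrix}\bigr)$ and $\bigl(\begin{smallmatrix} I_k\\ D\end{smallmatrix}\bigr)$, observes $\chi_1\cap\chi_2=0$, and quotes Corollary~\ref{cor:nazir} (Nazir's theorem, itself resting on Theorem~\ref{thm:exp}\eqref{x2} and the matrix computation of Proposition~\ref{prop:xi}) to get $\exp(\chi_1\otimes\C)\cap\exp(\chi_2\otimes\C)\cong A$; the $(\C^*)^n$ factor is then appended by crossing both subtori with $(\C^*)^n$ in a larger ambient torus --- exactly the ``product of ambient tori'' alternative you mention at the end. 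You instead use essentially the same integer matrices but on the character-group side, defining $\xi_1,\xi_2\le H=\Z^r$ directly, computing $\xi_1+\xi_2$, its primitive closure, and its determinant group by hand, and then invoking only Lemma~\ref{lem:vlattice} and Lemma~\ref{lem:vxi}. This is more elementary and self-contained: it bypasses the exponential map and Proposition~\ref{prop:xi} entirely, and it builds the $(\C^*)^n$ factor into the construction uniformly. What the paper's route buys is brevity and the illustration that Corollary~\ref{cor:nazir} is, in a sense, sharp (arbitrary finite abelian groups occur); what yours buys is independence from Sections~5's machinery. Your two flagged justification points (primitivity of the $\xi_i$, and the splitting of \eqref{eq:ex2} giving a genuine direct product rather than just an extension) are exactly the right ones, and both are supplied by Corollary~\ref{cor:primitive} and Lemma~\ref{lem:vxi} as you say.
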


\begin{proof}
Write $A= \Z_{d_1}\oplus\cdots\oplus\Z_{d_k}$.
Let $\chi_1$ and $\chi_2$ be the lattices in $\Z^{2k}$
spanned by the columns of the matrices
$\left(\begin{smallmatrix} I_k \\ 0 \end{smallmatrix}\right)$
and $\left(\begin{smallmatrix} I_k\\ D \end{smallmatrix}\right)$,
where $D=\diag(d_1,\dots,d_k)$.   Clearly, both $\chi_1$ and
$\chi_2$ are primitive, and $\chi_1\cap \chi_2=0$,
yet the lattice $\chi=\chi_1+\chi_2$ is {\em not}\/ primitive
if the group $A=\overline{\chi}/\chi$ is non-trivial.

Now consider the subtori $P_i=\exp(\chi_i \otimes \C)$ in $(\C^*)^{2k}$,
for $i=1,2$.   By Corollary \ref{cor:nazir}, we have that
$P_1 \cap P_2 \cong A$. Finally, consider the subtori
$T_i=(\C^*)^n  \times P_i$ in $(\C^*)^{n+2k}$. Clearly,
$T_1 \cap T_2\cong (\C^*)^n \times A$, and we are done.
\end{proof}

In particular, if $A$ is a finite cyclic group, there exist
$1$-dimensional subtori, $T_1$ and $T_2$, in
$(\C^{*})^{2}$ such that $T_1\cap T_2\cong A$.

\section{Intersections of torsion-translated subtori}
\label{sect:tors}

In this section, we revisit Theorem \ref{translated-prop} from
the exponential point of view.
In the case when the translation factors have finite order,
the criterion from that theorem can be refined,
to take into account certain arithmetic information about the
translated tori in question.

\subsection{Virtual belonging}
\label{subsec:virtual}

We start with a definition.

\begin{definition}
\label{def:virtual}
Let $\chi$ be a primitive lattice in $\Z^r$.  Given a vector
$\lambda\in \Q^r$, we say $\lambda$
\em{virtually belongs to $\chi$}\/ if
$d\cdot \lambda\in \Z^r$, where $d=\abs{\det\, [\chi \mid \chi_0]}$
and $\chi_0=\Q\lambda \cap \Z^r$.
\end{definition}

Here, $ [\chi \mid \chi_0]$ is the matrix obtained by concatenating
basis vectors for the sublattices $\chi$ and $\chi_0$ of $\Z^r$.
Note that $\chi_0$ is a (cyclic) primitive lattice, generated by
an element of the form $\lambda_0=m \lambda  \in \Z^r$, for
some $m\in \N$.

In the next lemma, we record several properties of the notion
introduced above.

\begin{lemma}
\label{lem:virtual}
With notation as above,
\begin{enumerate}
\item \label{o0}
$\lambda$ virtually belongs to $\chi$ if and only if
$d \lambda\in \chi_0$.
\item \label{o1}
$d=0$ if and only if $\lambda\in \chi\otimes \Q$, which in turn implies that
$\lambda$ virtually belongs to $\chi$.
\item \label{o2}
If $d>0$, then $d$ equals the order
of the determinant group $\overline{\chi+\chi_0}/\chi+\chi_0$.
\item \label{o3}
If $d=1$, then $\lambda$ virtually belongs to $\chi$
if and only if $\lambda\in \Z^r$, which happens if and only if
$\lambda \in \chi_0$.
\end{enumerate}
\end{lemma}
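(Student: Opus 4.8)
The plan is to prove the four statements essentially in the order listed, since each is either immediate from the definitions or leans on Proposition~\ref{prop:xi} (or its underlying Smith normal form computation). For part~\eqref{o0}, I would unwind Definition~\ref{def:virtual}: by construction $\chi_0 = \Q\lambda \cap \Z^r$, so $d\lambda \in \Z^r$ forces $d\lambda \in \Q\lambda \cap \Z^r = \chi_0$ (as $d\lambda$ is visibly a rational multiple of $\lambda$), and conversely $d\lambda \in \chi_0 \subseteq \Z^r$ gives $d\lambda \in \Z^r$. This is the reformulation that makes the remaining parts cleaner to state.

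For part~\eqref{o1}: if $\lambda \in \chi \otimes \Q$, then the basis vectors for $\chi_0$ are rational combinations of those of $\chi$, so the concatenated matrix $[\chi \mid \chi_0]$ has linearly dependent columns and hence $d = \abs{\det[\chi\mid\chi_0]} = 0$; conversely $d = 0$ means the columns are dependent, and since the columns of $\chi$ alone are independent (a basis of a lattice) and those of $\chi_0$ span a line, dependence forces the $\chi_0$-direction to lie in $\chi \otimes \Q$, i.e. $\lambda \in \chi\otimes\Q$. That $d = 0$ makes $\lambda$ virtually belong to $\chi$ is then trivial since $0 \cdot \lambda = 0 \in \Z^r$.

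For part~\eqref{o2}, which I expect to be the main obstacle, the idea is to identify $d = \abs{\det[\chi\mid\chi_0]}$ with the index $[\,\overline{\chi+\chi_0} : \chi+\chi_0\,]$ when $d > 0$. When $d > 0$, parts~\eqref{o1} shows $\chi \cap \chi_0 = 0$ (a nonzero intersection would put $\lambda$ in $\chi\otimes\Q$), so $\chi + \chi_0$ is a full-rank sublattice of $\overline{\chi+\chi_0}$, and the concatenated basis vectors of $\chi$ and $\chi_0$ form a basis of $\chi+\chi_0$; hence $\abs{\det[\chi\mid\chi_0]}$ computed against a basis of $\overline{\chi + \chi_0}$ (which we may enlarge to a basis of $\Z^r$ by primitivity of $\overline{\chi+\chi_0}$, using the splitting \eqref{eq:split}) equals exactly that index. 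The bookkeeping with the ambient lattice $\Z^r$ versus $\overline{\chi+\chi_0}$ is where care is needed: the determinant is taken in $\Z^r$-coordinates, so strictly $\abs{\det[\chi\mid\chi_0]}$ equals $[\Z^r : \chi+\chi_0]$ when $\chi+\chi_0$ has full rank, and in general $[\Z^r : \chi+\chi_0] = [\Z^r : \overline{\chi+\chi_0}]\cdot[\overline{\chi+\chi_0} : \chi+\chi_0]$ — but $\chi_0$ being \emph{primitive} forces $\overline{\chi} = \chi$ and $\chi_0$ to have the property that the first factor is $1$ precisely when $\chi + \chi_0$ already has full rank in $\Z^r$; when it does not, $d = 0$ and we are in case~\eqref{o1}. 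So when $d > 0$ we automatically have full rank, $[\Z^r:\overline{\chi+\chi_0}] = 1$ is false in general — instead I should argue directly that $d = \abs{\det[\chi\mid\chi_0]}$ equals $[\overline{\chi+\chi_0}:\chi+\chi_0]$ by changing to a basis of $\overline{\chi+\chi_0}$, in which the determinant of the transition matrix from $\Z^r$ contributes a factor of $[\Z^r : \overline{\chi+\chi_0}]$; reconciling these two indices is the one genuinely delicate point, and I would handle it by invoking the Smith normal form exactly as in the proof of Proposition~\ref{prop:xi}.

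Finally, part~\eqref{o3} is a direct corollary: if $d = 1$, then by part~\eqref{o0} $\lambda$ virtually belongs to $\chi$ iff $\lambda = 1\cdot\lambda \in \chi_0$, and since $\chi_0 = \Q\lambda \cap \Z^r \ni \lambda$ exactly when $\lambda \in \Z^r$, all three conditions coincide. I would present parts~\eqref{o0},~\eqref{o1},~\eqref{o3} in a few lines each and devote the bulk of the write-up to the index identification in~\eqref{o2}.
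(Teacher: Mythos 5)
The paper offers no proof of this lemma at all --- it is recorded as a list of immediate consequences of Definition~\ref{def:virtual} --- so the only question is whether your verification is sound. Parts \eqref{o0}, \eqref{o1} and \eqref{o3} are handled exactly as one should: $d\lambda$ always lies in $\Q\lambda$, so $d\lambda\in\Z^r$ if and only if $d\lambda\in\Q\lambda\cap\Z^r=\chi_0$; vanishing of the determinant is equivalent to linear dependence of the concatenated columns, i.e.\ to $\lambda\in\chi\otimes\Q$; and the $d=1$ case follows by substituting into \eqref{o0}.

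Your treatment of \eqref{o2} reaches the right conclusion but contains a false sentence and an unnecessary detour. When $d>0$, the square matrix $[\chi\mid\chi_0]$ has $r$ independent columns, so $\chi+\chi_0$ is a full-rank sublattice of $\Z^r$; its primitive closure is then forced to be all of $\Z^r$, since $\Z^r/\overline{\chi+\chi_0}$ is both finite and torsion-free. Hence your assertion that ``$[\Z^r:\overline{\chi+\chi_0}]=1$ is false in general'' is wrong precisely in the case under consideration: that index is $1$, there are no two indices to reconcile, and no Smith normal form argument is needed. The order of $\overline{\chi+\chi_0}/\chi+\chi_0$ is simply $[\Z^r:\chi+\chi_0]$, which equals $\abs{\det[\chi\mid\chi_0]}=d$ by the standard index--determinant formula for a full-rank sublattice presented by a basis matrix. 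I would delete the vacillating middle of that paragraph and replace it with these two observations; everything else can stand as written.
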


Next, we give a procedure for deciding when a torsion
element in a complex algebraic torus belongs to a subtorus.

\begin{lemma}
\label{lem:peta}
Let $P\subset (\C^*)^r$ be a subtorus, and
let $\eta\in (\C^*)^r$ be an element of finite order.
Write $P=\exp(\chi\otimes \C)$, where $\chi$ is a primitive lattice in $\Z^r$.
Then $\eta \in P$ if and only if $\eta=\exp(2\pi \ii \lambda)$, for
some $\lambda\in \Q^r$ which virtually belongs to $\chi$.
\end{lemma}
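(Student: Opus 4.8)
The plan is to translate the condition $\eta\in P$ into an arithmetic statement via the exponential description of $P$, and then recognize that statement as virtual belonging. First I would write $\eta=\exp(2\pi\ii\lambda)$; since $\eta$ has finite order, say $N$, we have $N\lambda\in\Z^r$, so $\lambda\in\Q^r$, and the cyclic primitive lattice $\chi_0=\Q\lambda\cap\Z^r$ is generated by $\lambda_0=m\lambda$ for a suitable $m\in\N$. The forward implication is easy: if $\eta=\exp(2\pi\ii\lambda)$ with $\lambda$ virtually belonging to $\chi$, then by Lemma \ref{lem:virtual}\eqref{o0} we have $d\lambda\in\chi_0\subseteq\chi+\chi_0$; in particular $\lambda\in(\chi+\chi_0)\otimes\Q=(\chi\otimes\Q)+\Q\lambda$, but more to the point $\lambda\in\chi\otimes\Q$ up to a vector in $\chi_0\subseteq\Z^r$... so I should instead argue directly: $d\lambda\in\chi_0$ means $d\lambda$ is an integer multiple of $\lambda_0=m\lambda$, hence $\lambda\in\chi\otimes\C$ once $d>0$ (because then $\lambda$ lies in the $\Q$-span of $\chi_0\subseteq\chi\otimes\Q$ is false in general — rather, I want $\lambda\in\chi\otimes\Q$). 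Let me re-orient: the clean statement is that $\lambda$ virtually belongs to $\chi$ iff $\lambda\in(\chi+\chi_0)\otimes\Q$ with denominators controlled, but since $\chi_0\otimes\Q=\Q\lambda$ always contains $\lambda$, virtual belonging is really a statement about $\lambda$ being congruent mod $\Z^r$ to something in $\chi\otimes\Q$. So I would prove: $\eta\in P=\exp(\chi\otimes\C)$ iff $\lambda\in(\chi\otimes\C)+\Z^r$, iff $\lambda\equiv\mu\pmod{\Z^r}$ for some $\mu\in\chi\otimes\Q$ (using that $\lambda\in\Q^r$ forces the representative to be rational), and then show this last condition is equivalent to $d\lambda\in\chi_0$.

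For the equivalence "$\eta\in P$ iff $\lambda\in(\chi\otimes\C)+\Z^r$": by Corollary \ref{cor:hexp} and Lemma \ref{lem:vexp}, $P=\exp(\chi\otimes\C)$ is exactly the image of $\chi\otimes\C\subseteq\C^r$ under $\exp\colon\C^r\to(\C^*)^r$, $z\mapsto e^{2\pi\ii z}$, and the kernel of this $\exp$ is $\Z^r$. Hence $\eta=\exp(2\pi\ii\lambda)\in P$ iff $\lambda$ lies in $(\chi\otimes\C)+\Z^r$ — no, the kernel consideration gives $2\pi\ii\lambda\in(\chi\otimes\C)+\ker(\exp)$, i.e. $\lambda\in(\chi\otimes\C)+\Z^r$ after absorbing the $2\pi\ii$. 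Then, intersecting with $\Q^r$ (since $\lambda\in\Q^r$): $\lambda\in(\chi\otimes\C)+\Z^r$ iff $\lambda\in((\chi\otimes\C)\cap\Q^r)+\Z^r=(\chi\otimes\Q)+\Z^r$, the last equality because $\chi$ is defined over $\Q$ (a real/complex solution of an integer linear system with an integer inhomogeneous term has a rational solution). So the task reduces to: for $\lambda\in\Q^r$, show $\lambda\in(\chi\otimes\Q)+\Z^r\iff d\lambda\in\chi_0$.

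For that final equivalence I would work in $\Z^r/\chi$ or, better, pass to the primitive closure. Write $\Lambda=\chi+\chi_0$ and $\overline\Lambda$ its primitive closure in $\Z^r$; by Lemma \ref{lem:virtual}\eqref{o2}, $d=|\overline\Lambda/\Lambda|$ when $d>0$ (the case $d=0$, i.e. $\lambda\in\chi\otimes\Q$, is immediate from both sides by Lemma \ref{lem:virtual}\eqref{o1}). Now $\lambda\in(\chi\otimes\Q)+\Z^r$: multiplying through, there is $n\in\N$ with $n\lambda\in\chi+n\Z^r\subseteq\chi+\Z^r$, so $n\lambda\in\Lambda$ for $n$ large enough (as $n\lambda\in\Z^r$ for $n$ divisible by the denominator), meaning $\lambda\in\overline\Lambda\otimes\Q$ with $\lambda$ itself possibly not integral — hmm. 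The cleanest route: $\lambda\in(\chi\otimes\Q)+\Z^r$ means the image $\bar\lambda$ of $\lambda$ in $(\Q^r/\chi\otimes\Q)$ actually lies in the image of $\Z^r$, i.e. in $(\Z^r+\chi\otimes\Q)/\chi\otimes\Q\cong\Z^r/(\chi\otimes\Q\cap\Z^r)=\Z^r/\overline\chi$. Meanwhile $d\lambda\in\chi_0$ is a condition I should unwind against the definition $d=|\det[\chi\mid\chi_0]|$. The honest approach is to choose a basis $e_1,\dots,e_s$ of $\chi$ extended to a basis of $\Z^r$ adapted to $\chi_0$, write $\lambda$ in coordinates, and check both conditions become "$d$ kills the $\chi$-complementary part of $\lambda$", which is exactly how $d$ was defined as a determinant.

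\textbf{Main obstacle.} The delicate point is the bookkeeping in this last step: reconciling the determinant definition of $d$ with the index $|\overline{\chi+\chi_0}/(\chi+\chi_0)|$ and with the divisibility condition $d\lambda\in\chi_0$, all simultaneously. This is essentially the content of Lemma \ref{lem:virtual}, so I expect to lean on parts \eqref{o0} and \eqref{o2} of that lemma rather than re-deriving the linear algebra; the real work is just the clean reduction "$\eta\in P\iff\lambda\in(\chi\otimes\Q)+\Z^r$" via the kernel of the exponential together with descent from $\C$ to $\Q$, after which Lemma \ref{lem:virtual}\eqref{o0} closes the argument immediately. I would keep the $d=0$ (equivalently $\lambda\in\chi\otimes\Q$) and $d=1$ cases in mind as sanity checks via Lemma \ref{lem:virtual}\eqref{o1},\eqref{o3}.
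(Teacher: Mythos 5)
Your plan is correct and matches the paper's proof in all essentials: the paper likewise splits off the $d=0$ case via Lemma \ref{lem:virtual}\eqref{o1}, and for $d\neq 0$ passes to a basis of $\Z^r$ adapted to $\chi$ and $\overline{\chi+\chi_0}$ in which $P=\{z : z_{n+1}=\cdots=z_r=1\}$ and the membership $\eta\in P$ reduces to the integrality of the single coordinate governed by $d$, i.e.\ to $d\lambda\in\chi_0$. Your intermediate reformulation $\eta\in P\iff\lambda\in(\chi\otimes\C)+\Z^r\iff\lambda\in(\chi\otimes\Q)+\Z^r$ via $\ker(\exp)=\Z^r$ is just an explicit packaging of the same coordinate computation, so there is no substantive difference.
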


\begin{proof}
Set $n=\rank \chi$, and write $\eta=\exp(2\pi \ii \lambda)$, for some
$\lambda=(\lambda_1,\dots, \lambda_r)\in \Q^r$. Let $\lambda_0$
be a generator of
$\chi_0=\Q\lambda \cap \Z^r$, and write $\lambda = q \lambda_0$.
Finally, set $d=\abs{\det\, [\chi \mid \chi_0]}$.

First suppose $d=0$. Then, by Lemma \ref{lem:virtual}\eqref{o1},
$\lambda$ virtually belongs to $\chi$.  Since $\lambda\in \chi\otimes \Q$,
we also have $\eta\in P$, and the claim is established in this case.

Now suppose $d\neq 0$.  As in the proof of Proposition~\ref{prop:xi},
we can choose a basis for $\Z^r$ so that the inclusion of $\chi+\chi_0$
into $\overline{\chi+\chi_0}$ has matrix of the form \eqref{eq:mat1},
with $D=d$.  In this basis, the lattice $\chi$ is the span of the
first $n$ coordinates, and $\chi_0$ lies in the span of the first
$n+1$ coordinates.  Also in these coordinates,
$\lambda_{n+1}=q$, and so $\eta_{n+1}=e^{2\pi \ii q}$;
furthermore, $\eta_{n+2}=\cdots =\eta_r=1$.
On the other hand, $P=\{ z \in (\C^*)^r \mid z_{n+1}=\dots =z_r=1\}$.
Therefore, $\eta\in P$ if and only if $d q$ is an integer, which
is equivalent to $d\lambda\in \chi_0$.
\end{proof}

\subsection{Torsion-translated tori}
\label{subsec:tt}

We are now ready to state and prove the main results of this section
(Theorem \ref{thm:intro3} from the Introduction).

\begin{theorem}
\label{thm:k=2}
Let $\xi_1$ and $\xi_2$ be two sublattices in $\Z^r$.
Set $\varepsilon=\xi_1\cap \xi_2$, and write
$\widehat{\overline{\varepsilon}/\varepsilon}=\{\exp(2\pi \ii \mu_k) \}_{k=1}^s$.
Also let $\eta_1$ and $\eta_2$ be two torsion elements in $(\C^*)^r$,
and write $\eta_j=\exp(2\pi \ii \lambda_j)$. The following are equivalent:
\begin{enumerate}
\item  \label{q1}
The variety $Q=\eta_1 V(\xi_1) \cap \eta_2 V(\xi_2)$ is non-empty.
\item \label{q2}
One of the vectors $\lambda_1-\lambda_2 - \mu_k$ virtually belongs to
the lattice $(\Z^r/\varepsilon)^{\vee}$.
\end{enumerate}
If either condition is satisfied, then $Q=\rho V(\xi_1+\xi_2)$,
for some $\rho \in Q$.
\end{theorem}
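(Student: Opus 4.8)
The plan is to translate the nonemptiness criterion of Theorem~\ref{translated-prop} into the arithmetic language of virtual belonging, using the exponential picture developed in Section~\ref{sect:vexp}. First I would invoke Corollary~\ref{cor:k=2}: the variety $Q=\eta_1 V(\xi_1) \cap \eta_2 V(\xi_2)$ is non-empty if and only if $\eta_1 \eta_2^{-1} \in V(\xi_1)\cdot V(\xi_2)$. The right-hand side is the algebraic subgroup $V(\xi_1 \cap \xi_2) = V(\varepsilon)$ by Lemma~\ref{lem:vlattice}, and by Lemma~\ref{lem:vxi} this decomposes as $V(\varepsilon) = \bigcup_{k=1}^{s} \exp(2\pi\ii\mu_k)\cdot V(\overline{\varepsilon})$, where $\{\exp(2\pi\ii\mu_k)\}$ enumerates $\widehat{\overline{\varepsilon}/\varepsilon}$ and $V(\overline{\varepsilon})$ is the identity component. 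Thus condition~\eqref{q1} is equivalent to saying that $\eta_1\eta_2^{-1} = \exp\bigl(2\pi\ii(\lambda_1-\lambda_2)\bigr)$ lies in one of the cosets $\exp(2\pi\ii\mu_k) V(\overline{\varepsilon})$, i.e.\ that $\exp\bigl(2\pi\ii(\lambda_1-\lambda_2-\mu_k)\bigr) \in V(\overline{\varepsilon})$ for some $k$.

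Next I would identify $V(\overline{\varepsilon})$ as the exponential of a primitive lattice so that Lemma~\ref{lem:peta} becomes applicable. By Corollary~\ref{cor:hexp}, with $\chi = (\Z^r/\varepsilon)^{\vee} \le \HH = (\Z^r)^{\vee}$, we have $V(\overline{\varepsilon}) = \exp(\chi\otimes\C)$, and by Lemma~\ref{lem:dual lattice} this $\chi$ is indeed a primitive lattice. Then Lemma~\ref{lem:peta}, applied to the torsion element $\eta_1\eta_2^{-1}\exp(-2\pi\ii\mu_k) = \exp\bigl(2\pi\ii(\lambda_1-\lambda_2-\mu_k)\bigr)$ and the subtorus $V(\overline{\varepsilon}) = \exp(\chi\otimes\C)$, says that this element lies in $V(\overline{\varepsilon})$ precisely when $\lambda_1-\lambda_2-\mu_k$ virtually belongs to $\chi = (\Z^r/\varepsilon)^{\vee}$. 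Combining this with the previous paragraph gives the equivalence \eqref{q1} $\Longleftrightarrow$ \eqref{q2}. One point requiring a small check is that $\mu_k$ can be taken to be a rational vector, so that $\lambda_1-\lambda_2-\mu_k \in \Q^r$ and Lemma~\ref{lem:peta} genuinely applies; this holds because $\exp(2\pi\ii\mu_k)$ is a torsion element of $(\C^*)^r$, being an element of the finite group $\widehat{\overline{\varepsilon}/\varepsilon}$.

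For the last assertion, that $Q = \rho V(\xi_1 + \xi_2)$ for some $\rho\in Q$ whenever the intersection is non-empty, I would simply cite Theorem~\ref{translated-prop}: in the case $k=2$ with $\xi = \xi_1 + \xi_2$, that theorem says precisely that a non-empty intersection $\eta_1 V(\xi_1) \cap \eta_2 V(\xi_2)$ equals $\rho V(\xi_1+\xi_2)$ for any $\rho$ in the intersection. Nothing further is needed here.

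The main obstacle, such as it is, is bookkeeping rather than mathematics: one must be careful that the several dualizations line up correctly—that the lattice $\chi$ whose exponential is $V(\overline{\varepsilon})$ is exactly $(\Z^r/\varepsilon)^{\vee}$ as a sublattice of $(\Z^r)^{\vee}$, matching the statement of condition~\eqref{q2}—and that the coset representatives $\mu_k$ are chosen as honest rational vectors so that the hypotheses of Lemma~\ref{lem:peta} are literally met. Once these identifications are pinned down, the proof is a short chain of citations to Corollary~\ref{cor:k=2}, Lemma~\ref{lem:vlattice}, Lemma~\ref{lem:vxi}, Corollary~\ref{cor:hexp}, and Lemma~\ref{lem:peta}, with Theorem~\ref{translated-prop} supplying the final description of $Q$.
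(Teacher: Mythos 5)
Your proof is correct and takes essentially the same route as the paper, whose entire proof of this theorem is the one-line citation ``Follows from Corollary \ref{cor:k=2} and Lemma \ref{lem:peta}''; you have merely made explicit the intermediate identifications ($V(\xi_1)\cdot V(\xi_2)=V(\varepsilon)$ via Lemma \ref{lem:vlattice}, the component decomposition from Lemma \ref{lem:vxi}, and $V(\overline{\varepsilon})=\exp\bigl((\Z^r/\varepsilon)^{\vee}\otimes\C\bigr)$ via Corollary \ref{cor:hexp}) that the authors leave implicit. Nothing is missing.
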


\begin{proof}
Follows from Corollary \ref{cor:k=2} and Lemma \ref{lem:peta}.
\end{proof}

This theorem provides an efficient algorithm for checking whether two
torsion-trans\-lated tori intersect.  We conclude with an example
illustrating this algorithm.

\begin{example}
\label{ex:alg}
Fix the standard basis $e_1,e_2,e_3$ for $\Z^3$, and consider
the primitive sublattices $\xi_1=\spn(e_1,e_2)$ and $\xi_2=\spn(e_1,e_3)$.
Then $\varepsilon=\spn(e_1)$ is also primitive, and so $s=1$ and $\mu_1=0$.
For selected values of $\eta_1, \eta_2\in (\C^*)^3$, let us decide
whether the set $Q=\eta_1 V(\xi_1) \cap \eta_2 V(\xi_2)$
is empty or not, using the above theorem.

First take $\eta_1=(1, 1, 1)$ and $\eta_2=(1, e^{2\pi \ii /3},1)$,
and pick $\lambda_1=0$ and $\lambda_2=(1,\frac{1}{3},0)$.
One easily sees that $d=\det\left(\begin{smallmatrix}
 0 & 0 &3\\
 0 & 1&1\\
 1 & 0&0\\
\end{smallmatrix}\right)=-3$,
and $d(\lambda_1-\lambda_2-\mu_1)=(3, 1, 0)\in \Z^3$.
Thus, $Q \neq \emptyset$.

Next, take the same $\eta_1$ and $\lambda_1$, but
take $\eta_2=(e^{3\pi \ii/2}, 1, 1)$ and $\lambda_2=(\frac{3}{4}, 0,1)$.
In this case, $d=\det\left(\begin{smallmatrix}
0 & 0&3\\
0 & 1&0\\
1 & 0&4\\
\end{smallmatrix}\right)=-3$ and $d(\lambda_1-\lambda_2-\mu_1)=
(\frac{9}{4}, 0, 3)\notin \Z^3$. Thus, $Q = \emptyset$.
\end{example}


\newcommand{\arxiv}[1]
{\texttt{\href{http://arxiv.org/abs/#1}{arXiv:#1}}}
\newcommand{\doi}[1]
{\texttt{\href{http://dx.doi.org/#1}{doi:#1}}}
\renewcommand{\MR}[1]
{\href{http://www.ams.org/mathscinet-getitem?mr=#1}{MR#1}}

\end{document}